\subjclass[2010]{Primary: 14F42 Secondary: 55P42, 19D45, 18E30}
\keywords{Tensor triangular geometry, stable motivic homotopy theory}
 \definecolor{dark-red}{rgb}{0.4,0.15,0.15}
\newcommand{\Q}{\mathbb{Q}} 
\newcommand{\QQ}{\Q}
\newcommand{\CC}{\mathbb{C}} 
\renewcommand{\AA}{\mathbb{A}}
\newcommand{\Cc}{\mathscr{C}}
\newcommand{\Dd}{\mathscr{D}}
\newcommand{\Z}{\mathbb{Z}}
\newcommand{\ZZ}{\mathbb{Z}}
\newcommand{\R}{\mathbb{R}}
\newcommand{\RR}{\R}
\newcommand{\FF}{\mathbb{F}}
\newcommand{\KK}{\mathscr{K}}
\newcommand{\iso}{\cong}
\newcommand{\wkeq}{\simeq}
\newcommand{\Sm}{\mathrm{Sm}}
\newcommand{\SH}{\mathrm{SH}}
\newcommand{\HH}{\mathbb{H}}
\newcommand{\Pic}{\operatorname{Pic}}
\newcommand{\supp}{\operatorname{supp}}
\newcommand{\id}{\mathrm{id}}
\newcommand{\ul}[1]{\underline{\smash{#1}}}
\renewcommand{\setminus}{\smallsetminus}
\DeclareMathOperator{\Sing}{\mathcal{S}}
\newcommand{\fin}{\mathrm{fin}}
\newcommand{\cell}{\mathrm{cell}}
\numberwithin{equation}{section} %Fiddles with numbering system of the following.
\theoremstyle{plain}
\newaliascnt{theorem}{equation}  
\newtheorem{theorem}[theorem]{Theorem}  
 \theoremstyle{definition}
\newaliascnt{prop}{equation}  
\newtheorem{prop}[prop]{Proposition}
\newaliascnt{lemma}{equation}  
\newtheorem{lemma}[lemma]{Lemma}
\newaliascnt{corollary}{equation}  
\newtheorem{corollary}[corollary]{Corollary}
\newaliascnt{claim}{equation}  
\newaliascnt{conjecture}{equation}  
\newaliascnt{question}{equation}  
\newtheorem{question}[question]{Question}
\newaliascnt{defn}{equation}  
\newtheorem{defn}[defn]{Definition}
\newaliascnt{example}{equation}  
\theoremstyle{remark}
\newaliascnt{remark}{equation}  
\newtheorem{remark}[remark]{Remark}
\newcommand{\aref}[1]{\autoref{#1}}
\newcommand{\Spc}{\operatorname{Spc}}
\renewcommand{\AA}{\mathbb{A}}
\newcommand{\SHA}{\mathrm{SH}^{\AA^1}\!}
\newcommand{\SHAt}[1]{\mathrm{SH}^{\AA^1}_{#1}}
\newcommand{\EM}{\mathrm{EM}}
\newcommand{\kmw}[2]{K^{MW}_{#1}(#2)}
\newcommand{\PP}{\mathbb{P}}
\newcommand{\Spec}{\operatorname{Spec}}
\newcommand{\pp}{\mathfrak{p}}
\newcommand{\GrFrac}{\operatorname{GrFrac}}
\newcommand{\Pp}{\mathscr{P}}
\newcommand{\Cell}{\operatorname{Cell}}
\newcommand{\cone}{\operatorname{cone}}
\newcommand{\RE}{\operatorname{Re}}
\newcommand{\chr}{\operatorname{char}}
\begin{document}
\title{Primes and fields in stable motivic homotopy theory}
\author{J. Heller}
\address{University of Illinois Urbana-Champaign}
\email{jbheller@illinois.edu}
\author{K. Ormsby}
\address{Reed College}
\email{ormsbyk@reed.edu}
% \address{}
% \thanks{}

\begin{abstract}
Let $F$ be a field of characteristic different than $2$. We establish surjectivity of Balmer's comparison map 
\[
  \rho^\bullet:\Spc(\SHA(F)^c)\to \Spec^h(K^{MW}_*(F))
\]
from the tensor triangular spectrum of the homotopy category of compact motivic spectra to the homogeneous Zariski spectrum of Milnor-Witt $K$-theory.
We also comment on the tensor triangular geometry of compact cellular motivic spectra, producing in particular novel field spectra in this category.  We conclude with a list of questions about the structure of the tensor triangular spectrum of the stable motivic homotopy category.
\end{abstract}
 
\maketitle
% \tableofcontents

\section{Introduction}\label{sec:intro}
  
The chromatic approach to stable homotopy theory is a powerful organizational and computational tool. Underlying this approach 
is a good understanding of the tensor triangular geometry of the stable homotopy category, as provided by the celebrated nilpotence and periodicity theorems of Hopkins-Smith \cite{hs:nil2}. Recent computational work \cite{Andrews:nonnilpotent} suggests a rich and vast chromatic picture in the motivic setting. However, a good (or any) understanding of the tensor triangular geometry of the stable motivic homotopy category is still lacking.

To set the stage for our results, recall that given a (small) tensor triangulated category $\KK$, the set underlying Balmer's spectrum $\Spc(\KK)$ is the set of tensor triangular prime ideals \cite{Balmer:ttspectrum}. This simple invariant is surprisingly powerful;  for example, under mild hypotheses it completely determines the thick tensor ideals of $\KK$ 
\cite[Theorem 4.10]{Balmer:ttspectrum}. Computing the spectrum of $\KK$ is no easy task, but Balmer's comparison maps \cite[Theorem 5.3, Corollary 5.6]{balmer:sss} make it possible to organize $\Spc(\KK)$ fiberwise over the classical spectra of (homogenous) prime ideals of the (graded) endomorphism ring of the unit.

This plays out as follows in the motivic setting over a field $F$ whose characteristic is not $2$. The tensor triangulated categories of interest here are the homotopy category of compact motivic $\PP^1$-spectra  $\SHA(F)^{c}$ and the homotopy category of compact cellular motivic $\PP^1$-spectra  $\Cc(F)$. We write $\KK(F)$ for either one of these two categories. Of course the graded endomorphism ring (relative to $\AA^1\smallsetminus 0$) is the same in either case, and by a theorem of Morel  \cite{morel:pi0}, this ring is the Milnor-Witt $K$-theory ring $\kmw*F$ of $F$.  In degree zero, $K^{MW}_{0}(F) = GW(F)$, the Grothendieck-Witt ring of quadratic forms.  Balmer's comparison maps thus take the form 
$$
\xymatrix{
 \Spc(\KK(F)) \ar[r]^-{\rho^\bullet}\ar[rd]_{\rho} &  \Spec^h(\kmw*F) \ar[d] \\
  & \Spec(GW(F)).
} 
$$

Moreover, \cite[Theorem 7.13]{balmer:sss} applies to show that $\rho$ is surjective. Complete knowledge of $\Spec(GW(F))$ can be extracted from classical work of Lorenz-Leicht \cite{lorenzleicht}, see also \cite[Remark 10.2]{balmer:sss}. Thus $\rho$ provides a very explicit lower bound for $\Spc(\KK(F))$.

Recent work of Thornton \cite{thornton} shows that $\Spec^h(\kmw*F)$ has precisely $|X_F|+1$ more elements than $\Spec(GW(F))$, where $X_F$ denotes the set of orderings on the field $F$. Thus $\rho^{\bullet}$ could potentially detect up to $|X_F|+1$ more primes of $\KK(F)$ than $\rho$ can. Our main thereom is that $\rho^{\bullet}$ is surjective and thus $\rho^{\bullet}$ does yield a full $(|X_F|+1)$-fold improvement on the previously known lower bounds for $\Spc(\SHA(F)^c)$. This meets the challenge posed in \cite[Remark 10.5]{balmer:sss}.

\begin{theorem}\label{thm:main}
Suppose $F$ is a field whose characteristic is not $2$.  Then both
$$
\rho^\bullet :\Spc(\SHA(F)^{c}) \longrightarrow \Spec^h(\kmw*F)
$$
and
$$
  \rho^\bullet :\Spc(\Cc(F)) \longrightarrow \Spec^h(\kmw*F)
 $$
are surjective.
\end{theorem}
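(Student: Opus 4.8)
The plan is to combine Balmer's surjectivity theorem for $\rho$ with Thornton's computation of $\Spec^h(\kmw*F)$ to reduce the statement to exhibiting tensor triangular primes over a single point of $\Spec(GW(F))$, and then to realize each such prime as the preimage of a prime along a symmetric monoidal functor out of $\KK(F)$.

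First I would invoke \cite[Theorem 7.13]{balmer:sss} for the surjectivity of $\rho\colon\Spc(\KK(F))\to\Spec(GW(F))$, and use that $\rho=\pi\circ\rho^\bullet$ where $\pi\colon\Spec^h(\kmw*F)\to\Spec(GW(F))$ takes a homogeneous prime to its degree-zero part. Combining this with Thornton's description \cite{thornton}, one checks that $\pi$ is surjective and that all of its fibres are singletons except the fibre over the unique prime $\mathfrak p_2\in\Spec(GW(F))$ lying above $(2)\in\Spec\Z$ under the rank homomorphism; that fibre has $|X_F|+2$ points: the \emph{Witt prime} $\mathfrak q_W$ (not containing $\eta$, with homogeneous residue field $\mathbb{F}_2[\eta^{\pm1}]$), the \emph{degenerate prime} $\mathfrak q_{\mathrm{top}}$ (containing $\eta$ and every class of nonzero Milnor--Witt degree, residue field $\mathbb{F}_2$), and, for each ordering $\sigma\in X_F$, a prime $\mathfrak s_\sigma$ (containing $\eta$ and $2$, with residue field $\mathbb{F}_2[\rho^{\pm1}]$). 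Surjectivity of $\rho$ puts every singleton fibre in the image of $\rho^\bullet$ and puts at least one point of $\pi^{-1}(\mathfrak p_2)$ in the image, so the theorem reduces to realizing $\mathfrak q_W$, $\mathfrak q_{\mathrm{top}}$, and all the $\mathfrak s_\sigma$.

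I would then construct these primes by pulling tensor triangular primes back along monoidal functors. For $\mathfrak q_{\mathrm{top}}$: use $2$-adic \'etale realization $\SHA(F)^c\to(\SH^{\wedge}_2)^c$ (available since $\chr F\neq 2$) and pull back the zero ideal of $(\SH^{\wedge}_2)^c$; because \'etale realization carries $\GG_m$ to an invertible object, kills every class of positive Milnor--Witt degree, and restricts to the rank homomorphism in degree zero, a short computation of cones identifies the $\rho^\bullet$-image of the pullback with $\mathfrak q_{\mathrm{top}}$. For $\mathfrak s_\sigma$: base change to the real closure $F_\sigma$ and use Bachmann's equivalence $\SHA(F_\sigma)[\rho^{-1}]\simeq\SH$ to get a real realization $\SHA(F)^c\to\SH^c$; pulling back the mod-$2$ Eilenberg--MacLane prime $\{Y:Y\otimes H\Z/2=0\}$ and observing that the induced graded ring map $\kmw*F\to\mathbb{F}_2[\rho^{\pm1}]$ is surjective with $\eta\mapsto0$ identifies the pullback with a homogeneous prime having residue field $\mathbb{F}_2[\rho^{\pm1}]$, i.e.\ with $\mathfrak s_\sigma$; distinct orderings give distinct primes because the degree-one part of the map records the positive cone of $\sigma$. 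For $\mathfrak q_W$: this prime is $\eta$-periodic, so it cannot be detected by any realization that trivializes $\eta$ (the two above act on $\eta$ by $\pm2$); instead one works inside $\SHA(F)[\eta^{-1}]$ and constructs a motivic field spectrum $H\underline{W}/\underline{I}$ --- the $\eta$-periodic, mod-fundamental-ideal Witt Eilenberg--MacLane spectrum, with weight-graded homotopy ring $\mathbb{F}_2[\eta^{\pm1}]$ --- and shows it is a tensor triangular field, so that $\{X:X\otimes H\underline{W}/\underline{I}=0\}$ is a prime with $\rho^\bullet$-image $\mathfrak q_W$.

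The part I expect to be hardest is this last construction: producing $H\underline{W}/\underline{I}$ as a genuine (homotopy) ring object, computing its weight-graded homotopy, and proving it is a tensor triangular field, so that its Bousfield class cuts out a prime rather than merely a thick tensor ideal; this is the source of the ``novel field spectra'' in the title. A secondary, more bookkeeping obstacle is to check that $\rho$- and $\eta$-inversion, $2$-completion, \'etale realization, and base change along the (typically infinite) real closure all preserve compactness and restrict to the cellular subcategory, so that intersecting the tensor triangular primes above with $\Cc(F)$ yields primes of $\Cc(F)$ with the same $\rho^\bullet$-images --- here one uses that $\rho^\bullet$ is computed from cones of self-maps of the unit sphere, which are cellular --- thereby giving the statement for $\Cc(F)$ as well.
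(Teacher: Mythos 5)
Your approach is genuinely different from the paper's, and in a way that is worth making precise. The paper's proof is deliberately indirect: it reduces (Step~1) to algebraically and real closed fields, handles the nonreal case (\aref{prop:nonreal}) by inverting $\eta$ and using \cite[Theorem 7.13]{balmer:sss} for the connective category $\eta^{-1}\KK(F)$, invokes a connectedness argument (via \cite[Theorem 2.11]{balmer:supp} and disconnection of $\Spec(\ZZ[1/p])$) to capture the remaining closed point $([F^\times],2,\eta)$, and then treats the real closed case by a parallel $[-1]$-inversion argument. The paper states explicitly in the introduction that this proof ``does not produce any explicit prime ideals.'' Your proposal, by contrast, aims to exhibit each missing prime as the preimage of a prime under a concrete monoidal functor. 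Your fibre analysis of $(-)^0:\Spec^h(\kmw*F)\to\Spec(GW(F))$ is correct (the only non-singleton fibre is over $(I(F),2)$, with $|X_F|+2$ points), and your realization of $\mathfrak s_\sigma$ via Bachmann's equivalence $\SHA(F_\sigma)[[-1]^{-1}]\simeq\SH$ is sound and in fact cleaner than what the paper does in \aref{sec:betti} (which predates \cite{bman} and so uses $C_2$-equivariant Betti realization instead); your claim for $\mathfrak q_{\mathrm{top}}$ is also plausible, though you would need to verify that a $2$-adic \'etale realization with the stated monoidal, exact, compactness-preserving properties exists for \emph{every} $F$ of characteristic $\neq 2$, a nontrivial input that the paper sidesteps entirely.

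The genuine gap is in your treatment of $\mathfrak q_W=([F^\times],2)$. You propose to build a homotopy ring spectrum $H\underline{W}/\underline{I}$ with $\pi_0(-)_*\cong\FF_2[\eta^{\pm 1}]$, prove it is a tensor triangular field, and take its kernel in $\SHA(F)^c$. But the only available argument that such a spectrum is a field (your version of \aref{lemma:realize}) proceeds by lifting a homogeneous basis of $\pi_*(M)_*$ to a map from a wedge of invertible shifts and then invoking that \emph{homotopy groups detect weak equivalences of cellular spectra}. This establishes fieldness only in $\SHA(F)^{\cell}$, and therefore \aref{prop:ker} only produces a prime in $\Cc(F)$, not in $\SHA(F)^c$. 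This is exactly why the paper confines its explicit field constructions (\aref{lemma:realize}, \aref{prop:construct}, \aref{thm:explicit}) to $\Cc(F)$, while proving surjectivity for $\SHA(F)^c$ by the indirect route described above. Your sketch goes in the wrong direction here: you propose to deduce the $\Cc(F)$ case from the $\SHA(F)^c$ case by intersecting primes, but the field-spectrum argument only gets you the $\Cc(F)$ case in the first place. There are two ways to close the gap: either (a) replace the field-spectrum argument for $\mathfrak q_W$ by the paper's own $\eta$-inversion argument, which works uniformly for both $\SHA(F)^c$ and $\Cc(F)$ and shows that the image of $\Spc(\eta^{-1}\KK(F))$ under $\rho^\bullet$ is all of $D_\eta$; or (b) first prove surjectivity for $\Cc(F)$ via your construction and then invoke \cite[Theorem 1.3]{balmer:surj} (as in \aref{thm:main}'s surrounding remark) to get a surjection $\Spc(\SHA(F)^c)\to\Spc(\Cc(F))$, noting that $\rho^\bullet$ is unchanged under this surjection since cones of graded self-maps of $S^0$ are cellular. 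As written, neither step is present.
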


\begin{remark}
This theorem tells us that we have a commutative diagram
\[\xymatrix{
  \Spc(\SHA(F)^c)\ar[r]\ar[d]_{\rho^\bullet} &\Spc(\Cc(F))\ar[d]^{\rho^\bullet}\\
  \Spec^h(\kmw*F)\ar@{=}[r] &\Spec^h(\kmw*F)
}\]
in which both vertical arrows are surjective.  Balmer has informed us of a general criterion \cite[Theorem 1.3]{balmer:surj} for when a tensor triangulated functor $f:\KK\to \mathscr{L}$ between essentially small tensor triangulated categories induces a surjection $\Spc(f):\Spc(\mathscr{L})\to\Spc(\KK)$; namely, if $f$ detects $\otimes$-nilpotence and $\KK$ is rigid, then $\Spc(f)$ is surjective.  This condition is clearly met by $f:\Cc(F)\subset \SHA(F)^{c}$, so the top horizontal arrow in the above diagram is surjective as well.
\end{remark}

Our argument for \aref{thm:main} is indirect, and does not produce any explicit prime ideals in the stable motivic homotopy categories.  After establishing our main theorem we address the question of producing explicit primes  for $\SHA(F)^c$ and $\Cc(F)$ by analyzing realization functors (\aref{sec:betti}) and field spectra for $\Cc(F)$ (\aref{sec:explicit}).  Ultimately, when $F$ is isomorphic to a subfield of $\CC$ we find infinite towers of tensor primes above all but one positive characteristic prime in $\kmw*F$; see \aref{fig:sketch}.

Both Joachimi \cite[Ch.~7]{joa} and Kelly \cite{kelly} have studied the tensor triangular geometry of $\Spc(\SHA(F)^c)$.  Joachimi's study is primarily concerned with Betti realization functors, and should be compared with our results in \aref{sec:betti}.  Kelly proves that $\rho^\bullet$ is surjective when $F$ is a finite field; we adapt his argument to prove \aref{prop:nonreal}.

\subsection*{Outline}
We recall the basic background of tensor triangular geometry, stable motivic homotopy theory, and Milnor-Witt $K$-theory in \aref{sec:prelim}. In particular we recall there Thornton's theorem on the structure of $\Spec^h(\kmw*F)$. In \aref{sec:surj} we establish our main result, \aref{thm:main}, on the surjectivity of $\rho^{\bullet}$.  In \aref{sec:explicit}, we find explicit field spectra for $\Cc(F)$ whose acyclics are tensor triangular primes living over particular elements of $\Spec^h(\kmw*F)$, see \aref{thm:explicit}.  In \aref{sec:betti}, we completely determine the effect of $\rho^\bullet$ on the injective image of $\Spc(\SH^{\fin})$ and $\Spc(\SH(C_2)^c)$ under the maps induced by Betti realization, see \aref{prop:topmot} and \aref{prop:eqmot}.  In \aref{sec:fields}, we devote our attention to field spectra for $\SHA(F)^c$ and elucidate the manner in which singular and realization functors interact with tensor triangular primes.  We conclude with a list of questions in \aref{sec:q}, amounting to an optimist's sketch of how to determine the complete structure of $\Spc(\SHA(F)^c)$. (\emph{Caveat emptor}: some of these questions are likely quite difficult.) 

\subsection*{Acknowledgements} It is our pleasure to thank Shane Kelly and Paul Balmer for helpful discussions and for sharing with us drafts of \cite{kelly} and \cite{balmer:surj}, respectively.  We also thank the anonymous referee for helpful comments and improvements to the exposition.  Ormsby was partially supported by NSF award DMS-1406327.

\subsection*{Notation}
We use the following notation throughout. 
\begin{itemize}
\item $F$: a field, always of characteristic $\ne 2$.
\item $\Sm/F$: the category of  smooth finite type schemes over $F$.
\item $\SHA(F)$, $\SHA(F)^c$, $\Cc(F)$ are respectively the stable motivic homotopy category over $\Spec F$, the full subcategory of compact motivic spectra, and the full subcategory of compact cellular motivic spectra.  
\item $\KK(F)$ refers to either of $\SHA(F)^c$ or $\Cc(F)$.
\item $K^{MW}_*(F)$: the Milnor-Witt $K$-theory ring of $F$.  
\item $S^0$ is the motivic sphere spectrum over $\Spec F$.
\item $S^m$, $S^{m\alpha}$ are respectively the ``simplicial''
and the ``geometric'' $m$-sphere, $m\in \Z$;  $S^m$ is  the $m$-fold suspension of $S^0$, and $S^{m\alpha} = (\AA^1\smallsetminus 0)^{\wedge m}$.
\item $[X,Y]_{\mathscr{D}}$: the set of maps between objects $X$, $Y$ of a category $\mathscr{D}$.
\item $\ul\pi_m(X)_n$ for $X\in \SHA(F)$, $m$, $n\in \ZZ$ is  the Nisnevich sheaf on $\Sm/F$ associated with the assignment
\[   
  U\longmapsto [S^m\wedge U_+,S^{n\alpha}\wedge X]_{\SHA(F)}.
\]
Note that other sources may write $\ul\pi_{m-n\alpha}(X)$ or $\ul\pi_{m-n,-n}(X)$ for this sheaf.  We have chosen our notation to match Morel's in \cite{morel:t,morel:pi0}.
\item $\pi_m(X)_n$ for $X\in \SHA(F)$, $m$, $n\in \ZZ$: sections of $\ul\pi_m(X)_n$ on $\Spec F$.  
\item $\Spec(R)$: the Zariski spectrum of prime ideals in a commutative ring $R$.
\item $\Spec^h(R_*)$: the Zariski spectrum of homogeneous prime ideals in an $\varepsilon$-commutative $\ZZ$-graded ring $R_*$ (which means that $\varepsilon\in R_0$, $\varepsilon^2=1$, and $ab = \varepsilon^{mn}ba$ for $a\in R_m$, $b\in R_n$).
\item $\Spc(\Cc)$ for $\Cc$ a tensor triangulated category: Balmer's tensor triangular spectrum of $\Cc$.
\end{itemize}

\section{Preliminaries}\label{sec:prelim}
In this section we recall background notions and material from tensor triangular geometry and stable motivic homotopy theory used in the rest of the paper.
\subsection{Tensor triangular geometry}\label{sec:ttgeo}
We begin by recalling a few definitions and facts about the spectrum of a tensor triangulated category. We refer the reader to \cite{balmer:sss,balmer:icm} for full details.

A \emph{tensor triangulated category} $(\KK, \otimes, 1)$ is a triangulated category $\KK$ together with a symmetric monoidal product $\otimes$ which is bi-exact and a unit $1\in \KK$. For $\KK$ essentially small, Balmer associates its \emph{tensor triangular spectrum} $\Spc(\KK)$, which is a topological space defined as follows.

A \emph{tensor triangular prime ideal} (or simply a prime ideal) of $\KK$ is a proper thick subcategory $\Pp\subsetneq \KK$ which is
\begin{itemize}
\item a \emph{tensor ideal}: if $a\in \KK$ and $b\in \Pp$, then $a\otimes b\in \Pp$, and is
\item \emph{prime}: if for some $a,b\in \KK$ we have $a\otimes b\in \Pp$, then $a$ or $b$ is in $\Pp$.
\end{itemize}

The prime spectrum of $\KK$ is the set of primes, 
$$
\Spc(\KK):=\{\Pp\subsetneq \KK \mid \textrm{ $\Pp$ is prime}\}.
$$ 
Given an object $a\in \KK$, its support is 
$$
\supp(a):= \{\Pp \in \Spc(\KK) \mid a\not\in \Pp\}.
$$
Write $U(a) = \Spc(\KK)\smallsetminus \supp(a)$. 
Equip $\Spc(\KK)$ with the topology defined by letting the sets $U(a)$ for $a\in \KK$ be a basis for the  open sets. In particular, the support $\supp(a)$ of an object is always closed.

The prime spectrum is contravariantly functorial in $\KK$. That is, given a tensor exact functor $\phi:\KK \to \mathcal{L}$ there is an induced continuous map 
$\Spc(\phi):\Spc(\mathcal{L})\to \Spc(\KK)$ defined by $\Pp\mapsto \phi^{-1}(\Pp)$.

Of central interest  in this paper are the continuous maps \cite[Theorem 5.3, Corollary 5.6]{balmer:sss} constructed by Balmer which relate the spectrum of a tensor triangulated category to the spectrum of graded and ungraded endomorphism rings. 
Let $u\in \Pic(\KK)$ be a $\otimes$-invertible object and  let
$R^{*}_{\KK}$ be the graded endomorphism ring of the unit: $R^{i}_{\KK} = [1, u^{\otimes i}]_{\KK}$ for $i\in \ZZ$.  We write $R_{\KK}$ = $R^{0}_{\KK}$ for ungraded ring of ordinary endomorphisms of the unit.  Write  
$ \Spec^h(R^{*}_{\KK})$ for the Zariski spectrum of homogeneous prime ideals in this graded ring. 
We have continuous maps
$$ 
\xymatrix{
\Spc(\KK) \ar[r]^{\rho^{\bullet}}\ar[rd]_{\rho} & \Spec^h(R^{*}_{\KK}) \ar[d]^{(-)^{0}} \\
& \Spec(R_{\KK}),
}
$$
where $\rho^{\bullet}$ is defined by 
$$
\rho^{\bullet}(\Pp):= \{f\in R^{*}_{\KK} \mid \Pp\in \supp(\cone(f)) \}
$$
and  $(-)^{0}$ is defined by sending a homogenous prime $\mathfrak{p}$ to the prime ideal $\mathfrak{p}^{0} = \mathfrak{p}\cap R_{\KK}^{0}$ of homogeneous degree zero elements. Of course, the map $\rho^\bullet$ depends on the choice of invertible element $u$; in general, we will omit $u$ from our notation, but will write $\rho^\bullet_u$ when specification is necessary.

\subsection{Stable motivic homotopy theory and Milnor-Witt {$K$}-theory}

We fix throughout a base field $F$ which will always be assumed to be of characteristic different than two.
The tensor triangulated categories of interest in this paper stem from the homotopy category $\SHA(F)$ of motivic spectra over  $F$, developed in \cite{v:icm, jardine:motsymm, morel:t}. This category itself is too large for tensor triangular geometry and, as is standard in this situation, we focus on the homotopy category $\SHA(F)^c$ of compact motivic spectra.\footnote{Recall that an object $X$ in a triangulated category $\KK$ is \emph{compact} provided that  
$[X, \oplus_{I} Y_i]_{\KK} \cong \oplus_{I}[X, Y_i]_{\KK}$, where $I$ is any set.} 
In this case, the symmetric monoidal product is the smash product, $\wedge$, and the unit object is the sphere spectrum $S^0$.

There are many invertible objects in $\SHA(F)^c$, two of which are singled out. The first is the ordinary circle $S^1$ and the second is the geometric (or Tate) circle, 
$S^{\alpha} := \AA^1\smallsetminus 0$. 
We write $S^{m+n\alpha} = (S^1)^{\wedge m}\wedge (S^{\alpha})^{\wedge n}$.
The homotopy category of \emph{compact cellular spectra} 
$\Cc(F)\subseteq \SHA(F)^c$ is defined to be the 
thick subcategory  generated by $\{S^{m+n\alpha} \mid n,m\in \Z\}$. We write $\KK(F)$ for either of these categories.

Milnor-Witt $K$-theory $\kmw*F$ plays a distinguished role in stable motivic homotopy theory because it is isomorphic to a graded ring of endomorphisms of the unit object in $\KK(F)$ (at least when  $\chr F\ne 2$). Specifically, Morel \cite{morel:pi0} proves that 
\[
  \kmw*F\cong \pi_0(S^0)_* \cong [S^0,S^{*\alpha}]_{\KK(F)}.
\]
Milnor-Witt $K$-theory is defined to be a quotient of the free associative $\ZZ$-graded ring on the set of symbols $[F^\times] := \{[u]\mid u\in F^\times\}$ in degree $1$ and $\eta$ in degree $-1$; the quotient is by the homogeneous ideal enforcing the following relations:
\begin{itemize}
\item $[uv] = [u]+[v]+\eta[u][v]$ (twisted logarithm),
\item $[u][v] = 0$ for $u+v=1$ (Steinberg),
\item $[u]\eta=\eta[u]$ (commutativity), and
\item $(2+[-1]\eta)\eta = 0$ (Witt).
\end{itemize}
Milnor-Witt $K$-theory is $\varepsilon$-commutative for $\varepsilon = -(1+[-1]\eta)$.

Moreover, it is related as follows to several important classical invariants. 
\begin{itemize}
 \item $K^{MW}_{0}(F) \cong GW(F)$, the Grothendieck-Witt ring of quadratic forms.
 \item $\eta^{-1}\kmw*F\cong W(F)[\eta, \eta^{-1}]$, the ring of Laurent polynomials over the Witt ring.
 \item $\kmw*F/\eta = K^M_*(F)$, the Milnor $K$-theory of $F$.
\end{itemize}

As observed in \cite[Corollary 10.1]{balmer:sss}, the map $\rho:\Spc(\KK(F)) \to \Spec(GW(F))$ is surjective. (This follows from a general criterion \cite[Theorem 7.13]{balmer:sss} for surjectivity of $\rho$.)  
Our results concern the map 
$\rho^{\bullet}:\Spc(\KK(F))\to \Spec^h(\kmw*F)$ and rely
crucially on the following theorem of Thornton.  Recall that $X_F$ is the set of orderings on $F$ equipped with the Harrison topology.\footnote{A subbasis for the Harrison topology is given by the sets $H(a) = \{\alpha\in X_F\mid a >_\alpha 0\}$ for $a\in F^\times$.  The Harrison space $X_F$ is a Stone space; in particular, $X_F$ is discrete if and only if $|X_F|<\infty$.}

\begin{theorem}[{\cite[Theorem 3.12]{thornton}}]\label{thm:thornton}
Each $\pp\in \Spec^h(\kmw*F)$ is of exactly one of the following forms where $p$ ranges over rational primes, $\alpha$ ranges over orderings of $F$, and $P_\alpha\subseteq F^\times$ denotes the positive cone of $\alpha$:
\begin{enumerate}[(1)]
\item $([F^\times],\eta)$,
\item $([F^\times],\eta,p)$,
\item $([F^\times],2)$,
\item $([P_\alpha],h)$,
\item $([P_\alpha],\eta,2)$, and
\item $([P_\alpha],h,p)$ for $p\ne 2$.
\end{enumerate}
Additionally, the topology on $\Spec^h(\kmw*F)$ is specified by the following facts:
\begin{enumerate}[(a)]
\item The subspace of minimal prime ideals is homeomorphic to $X_F^* = X_F\amalg \{\infty\}$ where $X_F$ has the Harrison topology and $\infty$ is an isolated point.
\item The Hasse diagram (\emph{i.e.}, inclusion poset) of $\Spec^h(\kmw*F)$ is shown in \aref{fig:kmw} and the closure operator takes a subset $S\subseteq \Spec^h(\kmw*F)$ to everything in or above $S$ in the Hasse diagram.
\end{enumerate}
\end{theorem}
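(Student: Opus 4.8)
\emph{Proof proposal.} The plan is to stratify $\Spec^h(\kmw*F)$ according to whether the canonical class $\eta\in K^{MW}_{-1}(F)$ lies in the prime. Recall (Morel) that $\kmw*F/\eta\cong K^M_*(F)$ and $\eta^{-1}\kmw*F\cong W(F)[\eta,\eta^{-1}]$, the latter being honestly commutative since $\varepsilon=-(1+[-1]\eta)$ maps to $-\langle-1\rangle=1$ in $W(F)$. Hence a homogeneous prime $\pp$ with $\eta\in\pp$ is the same datum as a homogeneous prime of $K^M_*(F)$, while one with $\eta\notin\pp$ is the same as a homogeneous prime of $W(F)[\eta^{\pm1}]$, which, $\eta$ being a central unit of degree $1$ there, corresponds bijectively to an ordinary prime of $W(F)$. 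So the proof has three parts: compute $\Spec^h(K^M_*(F))$, recall the classical structure of $\Spec W(F)$, and match the primes coming from each side (via the quotient and localization maps) to the six explicit families, finally reading off the topology.

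For the $\eta$-containing stratum: $K^M_0=\ZZ$, $K^M_{<0}=0$, and $K^M_{>0}$ is generated in degree one, so the ideal $\mathfrak m=K^M_{\ge1}$ (quotients $\ZZ$, or $\FF_p$ after adjoining $p$) yields the families $([F^\times],\eta)$ and $([F^\times],\eta,p)$. If a prime $\pp$ omits a symbol $\{u_0\}$, then $\{u_0\}(\{u_0\}-\{-1\})=0$ forces every symbol to be $\equiv0$ or $\equiv\{-1\}$ mod $\pp$, and the Steinberg relation together with multiplicativity of symbols makes $P=\{u\in F^\times:\{u\}\in\pp\}\cup\{0\}$ the positive cone $P_\alpha$ of an ordering; then $2\{-1\}=\{1\}=0\in\pp$ with $\{-1\}\notin\pp$ forces $2\in\pp$, and $K^M_*(F)/([P_\alpha],2)\cong\FF_2[\{-1\}]$ being a domain pins $\pp$ down to $([P_\alpha],2)$, i.e.\ $([P_\alpha],\eta,2)$ upstairs. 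For the $\eta$-inverted stratum I invoke the Harrison--Lorenz--Leicht description of $\Spec W(F)$: its minimal primes are the kernels $\ker(\mathrm{sgn}_\alpha\colon W(F)\to\ZZ)$, $\alpha\in X_F$ (together with the fundamental ideal $I(F)$ in the nonreal case), the primes above $\ker(\mathrm{sgn}_\alpha)$ are the $\mathrm{sgn}_\alpha^{-1}(p\ZZ)$, and $\mathrm{sgn}_\alpha^{-1}(2\ZZ)=I(F)$ uniformly. Composing $\kmw*F\to W(F)[\eta^{\pm1}]\xrightarrow{\mathrm{sgn}_\alpha}\ZZ[\eta^{\pm1}]$ sends $[u]\mapsto0$ for $u\in P_\alpha$ and $[u]\mapsto-2\eta^{-1}$ otherwise, with $h=2+[-1]\eta\mapsto0$; since $\ker(\mathrm{sgn}_\alpha)$ is generated by the $\langle u\rangle-1$ for $u\in P_\alpha$ and the evident quotient rings $\ZZ[\eta^{\pm1}]$, $\FF_p[\eta^{\pm1}]$, $\FF_2[\eta]$ are domains, the contracted primes are exactly $([P_\alpha],h)$, $([P_\alpha],h,p)$ for odd $p$, and $([F^\times],2)$. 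Comparing degree-zero parts and membership of $\eta$ and $2$ shows the six forms are mutually exclusive.

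Finally, the Hasse diagram is read off from these contraction descriptions — e.g.\ $\ker(\mathrm{sgn}_\alpha)\subseteq I(F)$ gives $([P_\alpha],h)\subsetneq([F^\times],2)$, and $h$ lies in $([P_\alpha],\eta,2)$ and in $([P_\alpha],h,p)$, producing the upward edges from $([P_\alpha],h)$ — and the closure of a point is its specialization closure $\{\mathfrak q\supseteq\pp\}$; one checks the poset is tame enough (each ordering contributes a finite-height chain, joined at $I(F)$ and at points of type $([F^\times],\eta)$) that the closure of an arbitrary subset is exactly everything lying above it. The minimal primes are the $([P_\alpha],h)$ together with the isolated point $([F^\times],\eta)$: the former carry the Harrison topology via the classical homeomorphism between $\mathrm{Min}(W(F))$ and $X_F$, and $([F^\times],\eta)$ is isolated because it is the unique minimal prime with $h\notin\pp$, so the basic open $D(h)$ separates it. I expect the main obstacle to be this last assembly step — confirming that the Zariski topology of $\Spec^h$ restricts precisely to the Harrison topology on the minimal stratum and that the poset is well enough behaved for the naive closure operator — rather than the prime classification, which is essentially bookkeeping on top of the known structure of $\Spec W(F)$ and $\Spec^h K^M_*(F)$.
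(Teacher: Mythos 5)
The paper does not actually prove this statement: it is Thornton's Theorem~3.12, imported as \cite[Theorem 3.12]{thornton} and used as a black box. So there is no ``paper's own proof'' to compare against. Evaluating your argument on its own merits: the $\eta$-stratification (reduce to $\Spec^h(K^M_*(F))$ when $\eta\in\pp$, to $\Spec W(F)$ when $\eta\notin\pp$) is exactly the natural route, and it is essentially what Thornton does. Your sketch is sound, including the use of $\{u\}(\{u\}-\{-1\})=0$ and Steinberg ($\{u/w\}\{v/w\}=0$ for $w=u+v$ gives $\{w\}^2\in\pp$, hence $\{w\}\in\pp$) to show that $P=\{u:\{u\}\in\pp\}\cup\{0\}$ is a positive cone when $\pp\not\supseteq K^M_{\ge1}$, and the verification via $D(h)$ that $([F^\times],\eta)$ is isolated among minimal primes (since $h\equiv 2\pmod{([F^\times],\eta)}$ is nonzero there but $h\in([P_\alpha],h)$ always).

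Two places where you are more casual than a complete proof can afford to be. First, the listed ``evident quotient rings'' $\ZZ[\eta^{\pm1}]$, $\FF_p[\eta^{\pm1}]$ are the quotients in the \emph{localized} ring $W(F)[\eta^{\pm1}]$; the actual quotients $\kmw*F/([P_\alpha],h)$ and $\kmw*F/([P_\alpha],h,p)$ are extended Rees algebras $\ZZ[2\eta^{-1},\eta]$ and $\FF_p[\eta^{\pm1}]$ respectively (the first is not Laurent, only its graded fraction field is). This matters because the proof needs the stronger claim that $([P_\alpha],h)$ \emph{equals} the kernel of $\kmw*F\to\ZZ[\eta^{\pm1}]$, not just that it is contained in it — i.e.\ one must actually compute $K^W_*(F)/[P_\alpha]$ and observe it injects into the target, using Morel's identification of $K^W_*$ with the Rees algebra. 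You gesture at this but a careful reader will want the computation. Second, the statement that the Zariski topology restricts to the Harrison topology on the minimal stratum does require the classical Harrison--Lorenz--Leicht analysis (that $D(\langle a\rangle-1)\cap\mathrm{Min}$ corresponds to the Harrison subbasic set $H(a)$), which you invoke but do not reproduce; since the paper itself leans on Lorenz--Leicht at exactly this point for the ungraded $\rho$, that is a reasonable thing to cite rather than prove. With these points tightened, the argument is complete.
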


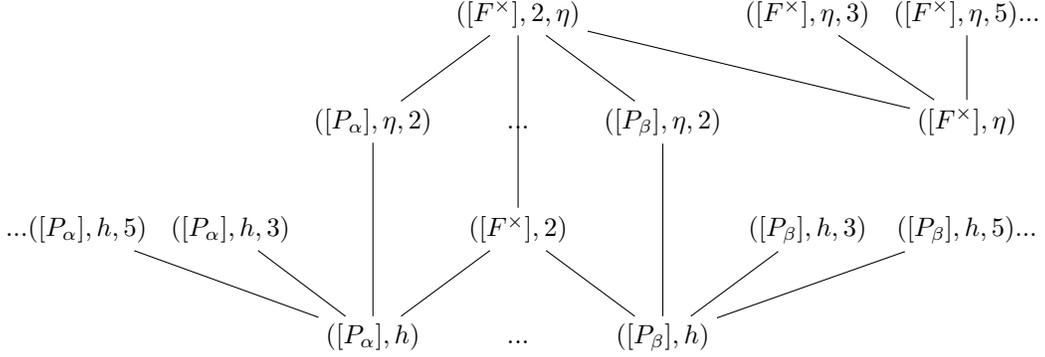
\begin{figure}
\[
\xymatrixcolsep{.25pc}\xymatrix{
 & & & ([F^\times],2,\eta)  \ar@{-}[ld] \ar@{-}[dd] \ar@{-}[rd] \ar@{-}[rrrd] & &([F^\times],\eta,3) \ar@{-}[rd] &([F^\times],\eta,5)... \ar@{-}[d] \\
 & &([P_\alpha],\eta,2)\ar@{-}[dd] & ... &([P_\beta],\eta,2)\ar@{-}[dd]& &([F^\times],\eta)\\
...([P_\alpha],h,5)\ar@{-}[rrd] &([P_\alpha],h,3)\ar@{-}[rd] & &([F^\times],2) \ar@{-}[ld] \ar@{-}[rd] & &([P_\beta],h,3)\ar@{-}[ld] &([P_\beta],h,5)...\ar@{-}[lld]\\
 & & ([P_\alpha],h)& ... &([P_\beta],h)}
\]
\caption{The Hasse diagram of $\Spec^h(\kmw*F)$.  Inclusion of tensor triangular primes and closure both go upwards in the diagram.  In the picture, we suppose that $\alpha$, $\beta\in X_F$, and the central ellipses represent the rest of $X_F$.}\label{fig:kmw}
\end{figure}

\section{Surjectivity}\label{sec:surj}
Let $\KK(F)$ denote either
$\SHA(F)^c$ or $\Cc(F)$.
  In this section we prove our main theorem \aref{thm:main}: the continuous map  $\rho^\bullet:\Spc(\KK(F))\to \Spec^h(\kmw*F)$ hits all the primes enumerated in \aref{thm:thornton}.  We warm up by showing that this is true when $F$ is nonreal, \emph{i.e.}~when $-1$ is a sum of squares in $F$.

\begin{prop}\label{prop:nonreal}
Suppose $F$ is a nonreal field whose characteristic is not $2$.  Then
\[
  \rho^\bullet:\Spc(\KK(F))\longrightarrow \Spec^h(\kmw*F)
\]
is surjective.
\end{prop}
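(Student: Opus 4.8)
The plan is to realize every prime in \aref{thm:thornton} as a value of $\rho^\bullet$ by pulling back primes along well-chosen tensor-exact functors out of $\KK(F)$. The mechanism I use repeatedly is this: if $\phi\colon\KK(F)\to\mathscr L$ is a tensor-exact functor into a nonzero essentially small category with $\phi(S^\alpha)$ invertible, and $g\colon\Spec^h(R^*_{\mathscr L})\to\Spec^h(\kmw*F)$ denotes the map on homogeneous spectra induced by the ring homomorphism $\kmw*F=R^*_{\KK(F)}\to R^*_{\mathscr L}$, then every prime in the image of $g$ lies in the image of $\rho^\bullet$. Indeed $\mathscr L\neq 0$ forces $\Spc(\mathscr L)\neq\emptyset$, and for any $\mathscr Q\in\Spc(\mathscr L)$ naturality of $\rho^\bullet$ \cite{balmer:sss} gives $\rho^\bullet(\Spc(\phi)(\mathscr Q))=g(\rho^\bullet_{\mathscr L}(\mathscr Q))$. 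Since $F$ is nonreal we have $X_F=\emptyset$, so by \aref{thm:thornton} there are only three families of primes, namely $([F^\times],\eta)$, the $([F^\times],\eta,p)$ for rational primes $p$, and $([F^\times],2)$; moreover \aref{thm:thornton} itself pins down exactly which primes contain $\eta$, which contain a given rational prime, and which contain $2$, which is what lets me check that each $g$ below hits a single prime.

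Concretely I would take four families of functors. First, the smashing $\eta$-localization $\KK(F)\to\KK(F)[\eta^{-1}]$: its target has $R^*_{\mathscr L}=\kmw*F[\eta^{-1}]\cong W(F)[\eta,\eta^{-1}]\neq 0$, and $g$ is the inclusion of the homogeneous primes \emph{not} containing $\eta$, which by \aref{thm:thornton} is the single prime $([F^\times],2)$. Second, the base change $-\wedge H\FF_2$ to (cellular) mod-$2$ motivic cohomology: its target has $R^*_{\mathscr L}=\bigoplus_{n\ge0}H^{n,n}(\Spec F;\FF_2)=K^M_*(F)/2\cong\kmw*F/(\eta,2)\neq 0$, and $g$ is the inclusion of the homogeneous primes containing $(\eta,2)$, which by \aref{thm:thornton} is the single prime $([F^\times],\eta,2)$. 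Third, $-\wedge H(\Z/p)$ for each odd prime $p$, with $R^*_{\mathscr L}=K^M_*(F)/p\cong\kmw*F/(\eta,p)\neq 0$ and $g$ the inclusion of the primes containing $(\eta,p)$, the single prime $([F^\times],\eta,p)$. Fourth, rationalization $\KK(F)\to\KK(F)_\Q$, with $R^*_{\mathscr L}=\kmw*F\otimes\Q$ (nonzero, since $W(F)$ is torsion for nonreal $F$ and hence $GW(F)\otimes\Q=\Q$) and $g$ the inclusion of the primes containing no rational prime, the single prime $([F^\times],\eta)$. When $\KK(F)=\Cc(F)$, each module category is replaced by its cellular part, which changes neither the image of $\phi$ nor $R^*_{\mathscr L}$. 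The identifications of $R^*_{\mathscr L}$ use $\kmw*F/\eta=K^M_*(F)$ and $\eta^{-1}\kmw*F\cong W(F)[\eta,\eta^{-1}]$ as recalled above, together with the standard identification $H^{n,n}(\Spec F;A)\cong K^M_n(F)\otimes A$ of diagonal motivic cohomology. This hits every prime of \aref{thm:thornton}, proving surjectivity.

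The content beyond what is already known is the first two functors. For $F$ nonreal the two primes $([F^\times],2)$ and $([F^\times],\eta,2)$ both map under $(-)^0$ to $I+2GW(F)$ (with $I=\ker(GW(F)\to\Z)$), so the surjectivity of $\rho$ recalled above — which, via unique preimages under $(-)^0$, already accounts for $([F^\times],\eta)$ and the $([F^\times],\eta,p)$ — cannot separate them; realizing both is exactly the job of the $\eta$-inverted (Witt-theoretic) window and the mod-$2$ (Milnor $K$-theory) window. The main obstacle is therefore verifying that each of these two windows, over a nonreal field, lands on a single prime: this is where the nonreal hypothesis enters, and it is the nonreal instance of Kelly's argument for finite fields \cite{kelly}. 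Over a formally real field, inverting $\eta$ exposes all of the Harrison space $X_F$ rather than a single point, which is why the general case of \aref{thm:main} requires the additional arguments of the next section.
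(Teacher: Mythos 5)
Your proof is correct, and it takes a genuinely different route for the one prime that is actually hard to reach, namely $([F^\times],\eta,2)$. The paper's proof, like yours, begins with the $\eta$-localization to hit $([F^\times],2)$ (its Step 1 and your first window are essentially identical: $\Spec^h(\eta^{-1}\kmw*F)\cong\Spec(W(F))$ is a single point when $F$ is nonreal, and $\rho$ is surjective there). The paper then accounts for all of $V_\eta$ except $([F^\times],2,\eta)$ by citing surjectivity of the ungraded $\rho$ onto $\Spec(GW(F))\cong\Spec\ZZ$, and it detects the remaining prime $([F^\times],2,\eta)$ by a topological indecomposability argument adapted from Kelly: if $\overline{U(\PP^2)}$ and $\supp(\PP^2)$ separated the spectrum, then after inverting the exponential characteristic every object is dualizable (by \cite{LYZ} and \cite{HPS:axiomatic}) and \cite[Theorem 2.11]{balmer:supp} would force the unit to decompose, contradicting irreducibility of $\Spec(GW(F)[1/p])$. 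Your proof instead reaches every prime by the same push-forward mechanism, choosing base changes so that $g$ lands on a single point of $\Spec^h(\kmw*F)$; the $H\FF_2$-module window for $([F^\times],\eta,2)$ is the genuinely new move, and your rational and mod-$p$ windows replace the paper's appeal to ungraded $\rho$. Your approach is more uniform and more constructive (it exhibits, in principle, a prime over each $\pp$ via a concrete functor, in the spirit of the paper's later \aref{sec:explicit}), at the cost of needing the identifications $\pi_0(H\FF_2)_*\cong K^M_*(F)/2$, $\pi_0(H(\ZZ/p))_*\cong K^M_*(F)/p$, and $\pi_0(S^0_\QQ)_*\cong \kmw*F\otimes\QQ$. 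One small precision worth making explicit: the displayed naturality only gives that $\im\rho^\bullet$ contains $g(\im\rho^\bullet_{\mathscr L})$, not a priori all of $\im g$; this suffices here exactly because in each of your four cases $\im g$ is a singleton and $\Spc(\mathscr L)\neq\emptyset$, so $g\circ\rho^\bullet_{\mathscr L}$ is forced to be constant at that singleton. You should also state which essentially small model of each $\mathscr L$ you are using (compact module objects, resp.\ the Verdier quotient of compacts for the $\eta$- and rational windows), since Balmer's comparison maps are defined at that level.
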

\begin{proof}
Our proof is an adaptation of Kelly's argument for the surjectivity of $\rho^\bullet$ in the case when $F$ is a finite field \cite{kelly}. We first show that $\rho^\bullet(U(\PP^2)) = D_\eta = \{([F^\times],2)\}$, where $U(\PP^2)\subseteq \Spc(\KK(F))$ is the complement of $\supp(\PP^2)$ and $D_\eta$ is notation for the basic open set of homogeneous prime ideals not containing $\eta$.  Indeed, since $\Sigma \PP^2 \simeq C\eta$, the cone of $\eta$, we have that the inclusion $U(\PP^2)\cong \Spc(\KK(F)/\langle \PP^2\rangle)\to \Spc(\KK(F))$ factors through $\Spc(\eta^{-1}\KK(F))$.  Recall from \cite[Corollary 3.11]{Morel:LNM} that $\eta^{-1}\kmw*F\cong W(F)[\eta,\eta^{-1}]$, the ring of Laurent polynomials in a variable $\eta$ over the Witt ring $W(F)$.  We therefore have that
$(~)^0:\Spec^h(\eta^{-1}\kmw*F)\cong \Spec(W(F))\cong \Spec(\FF_2)$ where the final identification holds via \cite{lorenzleicht} because $F$ has no orderings.  Since the non-graded analogue $\rho$ of  $\rho^\bullet$ is surjective (by \cite[Theorem 7.13]{balmer:sss} since $\eta^{-1}\KK(F)$ is connective), we conclude that $\rho^\bullet:\Spc(\eta^{-1}\KK(F))\to \Spec^h(\eta^{-1}\kmw*F)$ is surjective.  The image of $\Spec^h(\eta^{-1}\kmw*F)$ in $\Spec^h(\kmw*F)$ is precisely $D_\eta$.  We conclude that $\rho^\bullet(U(\PP^2)) = D_\eta$.

We now check that $\rho^\bullet(\supp(\PP^2))$ is precisely the (closed) complement $V_\eta = \{\pp\mid \eta\in\pp\}$ of $D_\eta$.  We have that $\rho^\bullet(\supp(\PP^2))\subseteq V_\eta$ by definition of $\rho^\bullet$ and $\supp$.  Note that by \cite[Remark 10.2]{balmer:sss}, $\Spec \ZZ\cong \Spec GW(F)$ via the assignment $p\ZZ\mapsto \dim^{-1} p\ZZ$ for $\dim:GW(F)\to \ZZ$ the dimension homomorphism.  Meanwhile $V_\eta = \{([F^\times],p,\eta)\mid p\text{ prime or }0\}$ and it is elementary to observe that $([F^\times],p,\eta)^0 = \dim^{-1} p\ZZ$.  
Thus $(~)^0:V_\eta\cong \Spec GW(F)\cong \Spec \ZZ$, and the composite $\rho:\Spc(\KK(F))\to \Spec\ZZ$ is surjective by \cite[Theorem 7.13]{balmer:sss}.
It follows that $\rho^\bullet(\supp(\PP^2))\subseteq V_\eta$ consists of at least every prime in $V_{\eta}$ except possibly $([F^\times],2,\eta)$.
Thus it remains to show that $([F^\times],2,\eta)\in\rho^\bullet(\supp(\PP^2))$. For this it suffices to show that the closure of $U(\PP^2)$ intersects $\supp(\PP^2)$ nontrivially. Indeed, by continuity,
$\rho^{\bullet}(\overline{U(\PP^2)} \cap \supp(\PP^2))$ is contained in 
$\overline{\rho^\bullet(U(\PP^2))}\cap V_{\eta} = \{([F^\times],2,\eta)\}$.

Suppose for contradiction that $\Spc(\KK(F))$ is separated by $\overline{U(\PP^2)}$ and $\supp(\PP^2)$. Let $p$ denote the exponential characteristic of $F$ and consider the functor $i:\KK(F)\to \KK(F)[\frac{1}{p}]$ and the induced continuous map
$$
\phi:=\Spc(i):\Spc( \KK(F)[1/p]) \to  \Spc(\KK(F)).
$$ 
Now we have that $\phi^{-1}(\supp(\PP^2)) = \supp(\PP^2[\frac{1}{p}])$ and 
$\phi^{-1}(U(\PP^2)) = U(\PP^2[\frac{1}{p}])$. It follows that $\Spc( \KK(F)[\frac{1}{p}])$ is separated by $\overline{U(\PP^2[\frac{1}{p}])}$ and $\supp(\PP^2[\frac{1}{p}])$. Now by \cite[Corollary B.2]{LYZ} every smooth $F$-scheme is dualizable in $\SHA(F)[\frac{1}{p}]$ and so by 
\cite[Theorem 2.1.3]{HPS:axiomatic}, every object in $\KK(F)[\frac{1}{p}]$ is dualizable. We may thus apply \cite[Theorem 2.11]{balmer:supp} to conclude that the unit of $\KK(F)[\frac{1}{p}]$ decomposes as a nontrivial sum of two objects. It follows that $GW(F)[\frac{1}{p}]$, the endomorphism ring of the unit in $\KK(F)$, decomposes as the product of two nontrivial rings.  
Since $\Spec(GW(F)[\frac{1}{p}])\cong \Spec(\ZZ[\frac{1}{p}])$ has a unique minimal element, $GW(F)[\frac{1}{p}]$ is irreducible, and we have reached a contradiction.  We conclude that $\overline{U(\PP^2)}\cap \supp(\PP^2)\ne \varnothing$, and hence $\rho^\bullet(\supp(\PP^2)) = V_\eta$.  It follows that $\rho^\bullet$ is surjective.

\end{proof}

We can now prove our main theorem.

\begin{proof}[Proof of \aref{thm:main}]
The proof proceeds in four steps.

\subsection*{Step 1}  We first show that it suffices to check surjectivity of $\rho^\bullet$ when $F$ is algebraically or real closed.  For an arbitrary $F$, let $i:F\hookrightarrow F_\alpha$ be a real or algebraic closure of $F$.  Consider the induced functor $i^*:\KK(F)\to \KK(F_\alpha)$, which is tensor triangulated such that $i^*((\AA^1\setminus 0)_F) = (\AA^1\setminus 0)_{F_\alpha}$.   Hence we get a commutative square
\[
\xymatrix{\Spc(\KK(F_\alpha))\ar[r]^{\Spc(i^*)}\ar[d]_{\rho^\bullet} &\Spc(\KK(F))\ar[d]^{\rho^\bullet}\\
\Spec^h(\kmw*{F_\alpha})\ar[r] &\Spec^h(\kmw*F).}
\]
By \aref{thm:thornton}, the sum of the bottom horizontal maps over all $\alpha\in X_F^* = X_F\amalg \{\infty\}$ (where $F_\infty =  \overline{F}$ and $F_\alpha$ is the real closure of $F$ corresponding to $\alpha \in X_F$) is surjective.  Thus if $\rho^\bullet$ is surjective for all algebraically or real closed fields, we may conclude that $\rho^\bullet$ is surjective in general.

\subsection*{Step 2}  We now check that $\rho^\bullet$ is surjective when $F$ is algebraically closed.  This follows from \aref{prop:nonreal} because algebraically closed fields are nonreal.

\subsection*{Step 3}  In this step, we will compute $[-1]^{-1}\kmw*F$ for $F$ real closed.  We use this computation in Step 4 as part of our proof that $\rho^\bullet$ is surjective when $F$ is real closed.

For the moment, take $F$ to be any field of characteristic different from $2$.  Via the twisted logarithm relation, we have
\[
  0 = [1] = 2[-1]+[-1]^2\eta
\]
in $\kmw*F$.  Hence $h = 2+[-1]\eta = 0$ in $[-1]^{-1}\kmw*F$.  It follows that $[-1]^{-1}\kmw*F = [-1]^{-1}(\kmw*F/h)$.  The ring $K^W_*(F) := \kmw*F/h$ also goes by the name Witt $K$-theory.  By \cite[Theorem 6.4.7]{morel:t}, we have $K^W_*(F)\cong W(F)[I\eta^{-1},\eta]$, the extended Rees ring of the Witt ring of $F$ with respect to the fundamental ideal $I$.  This is a graded ring in which $W(F)[I\eta^{-1},\eta]_n = I^n\cdot\{\eta^{-n}\}$, for $I^n$ the $n$-th power of the fundamental ideal in $W(F)$ (where $I^n = W(F)$ for $n\le 0$).  The image of $[-1]$ in $W[I\eta^{-1},\eta]$ is $2\cdot \eta^{-1}$.

Now suppose that $F$ is real closed, in which case $W(F)\cong \ZZ$ via the signature homomorphism and
\[
  K^W_*(F)\cong \ZZ[\eta,[-1]]/(2+[-1]\eta).
\]
Hence
\[
  [-1]^{-1}\kmw*F\cong [-1]^{-1}K^W_*(F)\cong \ZZ[[-1],[-1]^{-1}]\text{,}
\]
\emph{i.e.}, Laurent polynomials in the variable $[-1]$ over $\ZZ$.

\subsection*{Step 4}  We now show that $\rho^\bullet$ is surjective when $F$ is real closed, completing our proof.  Write
\[
  \Spec^h(\kmw*F) = V_{[-1]}\cup D_{[-1]}
\]
where $V_{[-1]}$ is the subspace of primes of the form (1), (2), or (3), and $D_{[-1]}$ is its complement.  Note that $V_{[-1]}$ is the closed set of primes containing $[-1]$, and $D_{[-1]}$ is the basic open of primes not containing $[-1]$.  Base change to $\overline{F}$ and Step 2 imply that $V_{[-1]}$ is hit by $\rho^\bullet$.  To show that $D_{[-1]}$ is hit by $\rho^\bullet$, consider the diagram
\[
\xymatrix{
  \Spc(\KK(F)/\langle C[-1]\rangle)\ar[r]\ar[d] &\Spc([-1]^{-1}\KK(F))\ar[r]\ar[d] &\Spc(\KK(F))\ar[d]\\
  D_{[-1]}\ar[r] &\Spec^h([-1]^{-1}\kmw*F)\ar[r]\ar[d]^{(~)^0} &\Spec^h(\kmw*F)\\
  &\Spec([-1]^{-1}\kmw0F)&{\phantom{\Spec^h(\kmw*F)}}.
}\]
Here $C[-1]$ is the cone of $[-1]:S^0\to S^\alpha$ and $\KK(F)/\langle C[-1]\rangle$ is the Verdier quotient by the thick subcategory $\langle C[-1]\rangle$ generated by $C[-1]$, whence $\Spc(\KK(F)/\langle C[-1]\rangle)$ is naturally homeomorphic to $U(C[-1])$.  Therefore the restriction of $\rho^\bullet$ to $\Spc(\KK(F)/\langle C[-1]\rangle)$ has image inside of $D_{[-1]}$, and this specifies the left-hand vertical map in the diagram.  Since $[-1]$ is invertible in $\KK(F)/\langle C[-1]\rangle$, the quotient functor $\KK(F)\to \KK(F)/\langle C[-1]\rangle$ factors through $[-1]^{-1}\KK(F)$.  The top row of the diagram is given by applying $\Spc$ to this factorization.  Naturality of $\rho^\bullet$ produces the rest of the diagram except for the map $(~)^0:\Spec^h([-1]^{-1}\kmw*F)\to \Spec([-1]^{-1}\kmw0F)$, which is the usual intersection-with-the-$0$-th-graded-piece map.

By Step 3, $(~)^0$ is a homeomorphism.  Additionally, the vertical composite is the comparison map $\rho$, which is surjective by connectivity of the category $[-1]^{-1}\KK(F)$ and \cite[Theorem 7.13]{balmer:sss}.  We conclude that the middle upper vertical $\rho^{\bullet}$ is surjective.  Furthermore, the bottom row in our diagram is a homeomorphism followed by an embedding so that the composite is the inclusion of $D_{[-1]}$ in $\Spec^h(\kmw*F)$.  It follows that the image of the right-hand vertical $\rho^\bullet$ contains all of $D_{[-1]}$.  We conclude that $\rho^\bullet$ is surjective, as desired.
\end{proof}

\section{Explicit primes}\label{sec:explicit}

The methods of the previous section show that $\rho^\bullet$ is surjective without explicitly identifying any tensor triangular primes living over elements of $\Spec^h(\kmw*F)$.  In this section, we construct elements of $\Spc(\Cc(F))$ living over elements of $\Spec^h(\kmw*F)$.  We focus on $\Spc(\Cc(F))$ because we are able to construct a rich class of fields for $\Cc(F)$.  
Using a different approach in \aref{sec:fields}, we  construct fields for $\Spc(\SHA(F)^c)$.
  
Before constructing field spectra, we will need some preliminary facts about fields in tensor triangulated categories, Milnor-Witt $K$-theory, Morel's homotopy $t$-structure, and Dugger-Isaksen's cellularization functor.  We gather these facts in the next four subsections.  The reader familiar with this content should focus on \aref{defn:field}, \aref{prop:ker}, \aref{defn:res}, and \aref{prop:res}, familiarize herself with our notation, and then move on to \aref{subsec:fields}.

\subsection{Fields in tensor triangulated categories}\label{subsec:ttfields}
Let $(\Cc,\otimes,1)$ be a tensor triangulated category.  Recall that a \emph{ring} in $\Cc$ is an object $R$ equipped with a unit $\eta:1\to R$ and multiplication $\mu:R\otimes R\to R$ making $R$ a monoid in $\Cc$.\footnote{The monoid/ring nomenclature is somewhat at odds.  Of course, a monoid in $\ZZ$-modules is a ring, and this is the inspiration for the terminology.}  An \emph{$R$-module} in $\Cc$ is an object $M$ equipped with an action map $\alpha:R\otimes M\to M$ satisfying the standard axioms.  Let $\Pic(\Cc)$ denote the Picard group of $\otimes$-invertible objects in $\Cc$.  We call an $R$-module \emph{free} if it is module-isomorphic to a coproduct of modules of the form $P\otimes R$ where each $P$ is in $\Pic(\Cc)$.

\begin{defn}\label{defn:field}
A ring $K\in \Cc$ is called a \emph{skew field} 
if every $K$-module in $\Cc$ is free.
\end{defn}
 
We will abuse terminology and call a skew field $K$  a \emph{field}, even when $K$ is not commutative. Field objects are important in tensor triangular geometry because they are a rich source of prime ideals.
 
\begin{prop}\label{prop:ker}
Suppose that $K\in \Cc$ is a field.  Let $\Dd$ be a tensor triangulated subcategory of $\Cc$.  Then
\[
  \ker_\Dd(K) := \{X\in \Dd\mid K\otimes X \cong 0\}
\]
is a tensor triangular prime in $\Dd$.
\end{prop}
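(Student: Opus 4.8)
The plan is to check, in order, that $\ker_\Dd(K)$ is a thick subcategory of $\Dd$, that it is a $\otimes$-ideal, that it is proper, and that it is prime — with the field hypothesis on $K$ entering only in the last (and essential) step. The first three are formal consequences of the fact that $K\otimes(-)$ is an additive exact endofunctor of $\Cc$ together with the defining properties of $\Dd$.

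For the formal part: since $\otimes$ is bi-exact, $K\otimes(-)$ is exact, so $\ker_\Dd(K)$ contains $0$, is stable under shifts, and contains the third vertex of any triangle in $\Dd$ two of whose vertices it contains; it is thick because an additive functor preserves retracts, so a summand of an object killed by $K$ is killed by $K$. It is a $\otimes$-ideal because for $a\in\Dd$, $b\in\ker_\Dd(K)$ we have $a\otimes b\in\Dd$ (as $\Dd$ is a tensor subcategory) and $K\otimes(a\otimes b)\cong a\otimes(K\otimes b)\cong 0$. It is proper because $\Dd$, being a tensor triangulated category, contains the unit $1$, and $K\otimes 1\cong K\not\cong 0$ since $K$ is a nonzero ring, so $1\notin\ker_\Dd(K)$.

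The crux is primality. Suppose $a,b\in\Dd$ with $a\otimes b\in\ker_\Dd(K)$, that is $K\otimes a\otimes b\cong 0$; I must show $K\otimes a\cong 0$ or $K\otimes b\cong 0$. View $M\defeq K\otimes a$ as a $K$-module via $\mu\otimes\id_a$. Because $K$ is a field, $M$ is free: $M\cong\bigoplus_{i\in I}P_i\otimes K$ with each $P_i\in\Pic(\Cc)$. If $I=\varnothing$ then $K\otimes a = M\cong 0$ and $a\in\ker_\Dd(K)$. If $I\neq\varnothing$, fix $i_0\in I$; then $P_{i_0}\otimes K$ is a retract of $M$, hence $P_{i_0}\otimes K\otimes b$ is a retract of $M\otimes b\cong K\otimes(a\otimes b)\cong 0$, so $P_{i_0}\otimes K\otimes b\cong 0$; tensoring with the inverse of $P_{i_0}$ gives $K\otimes b\cong 0$, so $b\in\ker_\Dd(K)$. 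Either way one of $a$, $b$ lies in $\ker_\Dd(K)$, so it is prime.

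I expect the only real content to be the dichotomy in the last paragraph: one has to recognize $K\otimes a$ as a $K$-module, invoke its freeness, and separate the degenerate case of an empty index set (which is exactly $K\otimes a\cong 0$) from the case in which an invertible summand $P_{i_0}\otimes K$ survives and can be used to cancel $K\otimes b$. Everything else is routine bookkeeping of thick $\otimes$-ideals.
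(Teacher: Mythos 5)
Your proof is correct and follows essentially the same route as the paper's: both identify $K\otimes X$ as a $K$-module, invoke freeness to write it as $\bigoplus_{i\in I}P_i\otimes K$, and conclude by distinguishing $I=\varnothing$ from the case where an invertible-twisted copy of $K$ survives and can be cancelled against $K\otimes Y\cong 0$. You spell out the routine verifications (thickness, ideal, properness) and the retract/cancellation step a bit more explicitly than the paper, but there is no substantive difference in approach.
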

\begin{proof}
It is easy to check that $\ker_\Dd K$ is a tensor triangular ideal, so all that remains to show is primality.  Suppose that $X\otimes Y\in \ker_\Dd K$ for some $X$, $Y$ in $\Dd$.  Since $K\otimes X$ is a $K$-module and $K$ is a field, we have
\[
  K\otimes X \cong \bigoplus_{i\in I} P_i\otimes K
\]
where $P_i\in \Pic(\Cc)$ for $i\in I$.  Thus
\[
  0\cong K\otimes X\otimes Y\cong \bigoplus_{i\in I}P_i\otimes K\otimes Y.
\]
Since each $P_i$ is $\otimes$-invertible, this is only possible if $I=\varnothing$ (in which case $X\in \ker_\Dd K$) or $K\otimes Y\cong 0$ (in which case $Y\in \ker_\Dd K$).  Thus $\ker_\Dd K\in \Spc(\Dd)$, as desired.
\end{proof}

\begin{remark}
 We are considering $P\otimes R$ to be a free $R$-module for any $P\in \Pic(\Cc)$. In specific cases, one might wish to restrict this notion. For example in the motivic setting (where the structure of $\Pic(\SHA(F))$ is complicated and not completely determined, see \cite{b:pic,hu:pic}), it could be reasonable to only allow smashing with $S^{m+n\alpha}$ rather than arbitrary invertible spectra. (Indeed, the fields we produce are of this form.) However, it will not make a difference in our work below and so we work with the general notion. 
\end{remark}

\subsection{The residue fields of Milnor-Witt $K$-theory}\label{subsec:KMW}

In order to produce fields for $\Cc(F)$, we will need to understand the residue fields of Milnor-Witt $K$-theory, as defined below.

\begin{defn}\label{defn:res}
For $\pp$ a homogeneous prime ideal of $\kmw*F$, we define the \emph{residue field} of $\kmw*F$ at $\pp$,
\[
  L(\pp) := \GrFrac(\kmw*F/\pp),
\]
to be the graded field of fractions of the $\ZZ$-graded integral domain $\kmw*F/\pp$; this is formed by inverting all nonzero homogeneous classes in $\kmw*F/\pp$.
\end{defn}

By elementary computation and \aref{thm:thornton}, we arrive at the following list of residue fields for $\kmw*F$.

\begin{prop}\label{prop:res}
The residue fields of $\kmw*F$ take the following forms (with the obvious $\kmw*F$-module structure):
\begin{enumerate}[(1)]
\item $L([F^\times],\eta)\cong \QQ$,
\item $L([F^\times],\eta,p)\cong \FF_p$,
\item $L([F^\times],2)\cong \FF_2[\eta^{\pm 1}]$,
\item $L([P_\alpha],h)\cong \QQ[\eta^{\pm 1}]$,
\item $L([P_\alpha],\eta,2)\cong \FF_2[[-1]^{\pm 1}]$, and
\item $L([P_\alpha],h,p)\cong \FF_p[\eta^{\pm 1}]$ for $p\ne 2$.
\end{enumerate}
\hfill\qedsymbol
\end{prop}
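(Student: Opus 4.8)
The plan is to compute the $\ZZ$-graded domain $\kmw*F/\pp$ in each of the six cases of \aref{thm:thornton} and then form its graded fraction field by inverting all nonzero homogeneous elements. The key convenience is that $\GrFrac(A)=\GrFrac(s^{-1}A)$ for a graded domain $A$ and any nonzero homogeneous $s\in A$, so I am free to localize first at a convenient class. By the Hasse diagram of \aref{thm:thornton}, $\pp$ contains $\eta$ precisely in cases (1), (2), (5) and does not contain $\eta$ in cases (3), (4), (6); I will treat these via $\kmw*F/\eta\cong K^M_*(F)$ and via $\eta^{-1}\kmw*F\cong W(F)[\eta^{\pm1}]$ respectively, the latter being \cite[Corollary 3.11]{Morel:LNM} as already used in \aref{prop:nonreal}. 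Under that isomorphism $[u]$ maps to a unit times the Pfister class $\langle\!\langle u\rangle\!\rangle:=\langle 1\rangle-\langle u\rangle\in W(F)$ (these generate the fundamental ideal $I$ as $u$ ranges over $F^\times$), $h$ maps to $0$, and $p$ maps to $p$.

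\emph{Cases (1), (2), (5), through Milnor $K$-theory.} Since $K^M_*(F)$ is generated in degree $1$ by $[F^\times]$, the quotient $K^M_*(F)/([F^\times])$ is concentrated in degree $0$ where it is $K^M_0(F)=\ZZ$; hence $\kmw*F/([F^\times],\eta)\cong\ZZ$ and $\kmw*F/([F^\times],\eta,p)\cong\FF_p$ in degree $0$, and passing to graded fraction fields gives (1) and (2). For case (5), $\kmw*F/([P_\alpha],\eta,2)\cong k^M_*(F)/([P_\alpha])$ with $k^M_*:=K^M_*/2$; its degree-$1$ part is $F^\times/\big((F^\times)^2P_\alpha\big)=F^\times/P_\alpha\cong\ZZ/2$, generated by the nonzero class of $-1$. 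As $k^M_*$ is generated in degree $1$, the quotient is a graded quotient of the polynomial ring $\FF_2[[-1]]$ on a degree-$1$ generator; by \aref{thm:thornton} it is a domain, and the only graded-domain quotient of $\FF_2[[-1]]$ with nonzero degree-$1$ part is $\FF_2[[-1]]$ itself, so $L([P_\alpha],\eta,2)\cong\GrFrac(\FF_2[[-1]])=\FF_2[[-1]^{\pm1}]$.

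\emph{Cases (3), (4), (6), after inverting $\eta$.} Here $\eta\notin\pp$, so $L(\pp)=\GrFrac\big((\eta^{-1}\kmw*F)/\pp\big)$, and applying the translation above, $(\eta^{-1}\kmw*F)/\pp\cong(W(F)/\mathfrak{a})[\eta^{\pm1}]$ with $\mathfrak{a}=I$ in case (3) (using $2\in I$), $\mathfrak{a}=J_\alpha:=(\langle\!\langle u\rangle\!\rangle:u\in P_\alpha)$ in case (4), and $\mathfrak{a}=J_\alpha+pW(F)$ with $p$ odd in case (6); hence $L(\pp)=\operatorname{Frac}(W(F)/\mathfrak{a})[\eta^{\pm1}]$. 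Case (3) is then $\FF_2[\eta^{\pm1}]$ since $W(F)/I\cong\FF_2$. The remaining input, which I expect to be the only substantive step, is the classical identity $W(F)/J_\alpha\cong\ZZ$ via the signature $\operatorname{sgn}_\alpha$. One may quote it from Witt-ring structure theory (cf.\ \cite{lorenzleicht}) or argue directly: $W(F)/J_\alpha$ is generated as a ring by the classes $\langle u\rangle$, each of which is $\pm1$ in the quotient (in $W(F)$ one has $\langle-1\rangle=-1$, and $\langle u\rangle\equiv1$ for $u\in P_\alpha$, hence $\langle u\rangle\equiv-1$ for $u\notin P_\alpha$), so $W(F)/J_\alpha$ is a quotient of $\ZZ$; and $\operatorname{sgn}_\alpha$ kills the generators $\langle\!\langle u\rangle\!\rangle$, $u\in P_\alpha$, of $J_\alpha$, so it descends to a surjection $W(F)/J_\alpha\twoheadrightarrow\ZZ$ — and a quotient of $\ZZ$ surjecting onto $\ZZ$ is $\ZZ$. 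Granting this, case (4) gives $L([P_\alpha],h)\cong\QQ[\eta^{\pm1}]$ and case (6) gives $L([P_\alpha],h,p)\cong\FF_p[\eta^{\pm1}]$ (since $W(F)/(J_\alpha+pW(F))\cong\ZZ/p$). In every case the $\kmw*F$-module structure is the evident one induced by $\kmw*F\twoheadrightarrow\kmw*F/\pp\hookrightarrow L(\pp)$.
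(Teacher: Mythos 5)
The paper records no proof of \aref{prop:res}: it is asserted as an ``elementary computation'' with a terminal $\square$, so there is nothing to compare your argument against line by line. Your write-up is a correct and complete filling-in of that assertion, and its organizing principle---split according to whether $\eta\in\pp$ and use the two identifications $\kmw*F/\eta\cong K^M_*(F)$ and $\eta^{-1}\kmw*F\cong W(F)[\eta^{\pm1}]$ recorded in the paper's preliminaries---is surely the intended one.

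A few small confirmations worth recording. (i) The partition $\{(1),(2),(5)\}$ versus $\{(3),(4),(6)\}$ according to $\eta\in\pp$ is right, and you justify it cleanly via Thornton's Hasse diagram (for (6) it is also immediate: $\eta\in([P_\alpha],h,p)$ with $p$ odd would force $2\in\pp$ and hence $1\in\pp$). (ii) In case (5) the key point is that $(F^\times)^2\subseteq P_\alpha$, so $F^\times/((F^\times)^2P_\alpha)=F^\times/P_\alpha\cong\ZZ/2$ generated by $-1$; you correctly combine generation-in-degree-one with Thornton's guarantee that $\kmw*F/\pp$ is a graded domain to pin down $\FF_2[[-1]]$ without further computation. (iii) In cases (3), (4), (6) your passage through $\eta^{-1}\kmw*F$ is licensed by $\GrFrac(A)=\GrFrac(s^{-1}A)$ for $s$ homogeneous nonzero; the translation $[u]\mapsto(\text{unit})\cdot\langle\!\langle u\rangle\!\rangle$, $h\mapsto 0$ is the expected one, and your direct argument that $W(F)/J_\alpha\cong\ZZ$ (generators $\langle u\rangle$ map to $\pm1$, and the signature descends to a surjection) is a clean substitute for quoting Witt-ring structure theory. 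The identification $W(F)/I\cong\FF_2$ in case (3), with $2=\langle\!\langle-1\rangle\!\rangle\in I$, is standard.

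One tiny stylistic remark: in case (3) you write $\mathfrak{a}=I$ ``using $2\in I$,'' but it is equally direct to note that the image of $[F^\times]$ already generates all of $I$ regardless of $2$, so the $2$ is redundant; no correction needed, just an observation. Overall the proof is correct, self-contained, and at the level of detail appropriate for the claim.
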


\subsection{The homotopy $t$-structure}

Morel \cite{morel:t} has introduced a $t$-structure on $\SHA(F)$ defined in the following fashion.  Let $\SHAt{\ge 0}(F)$ denote the full subcategory of \emph{connective motivic spectra}, those $X$ such that $\ul\pi_m(X)_n = 0$ for $m < 0$, $n\in \ZZ$.  Similarly, let $\SHAt{\le 0}(F)$ consist of $X$ such that $\ul\pi_m(X)_n = 0$ for $m> 0$, $n\in \ZZ$.  By \cite[Theorem 5.2.3]{morel:t}, the triple $(\SHA(F),\SHAt{\ge 0}(F),\SHAt{\le 0}(F))$ is a non-degenerate $t$-structure, called the \emph{homotopy $t$-structure}.

In the usual fashion, we can define $\SHAt{\le m}(F)$ and $\SHAt{\ge m}(F)$ for $m\in \ZZ$ and the adjoints to inclusion $\tau_{\le m}:\SHA(F)\to \SHAt{\le m}(F)$ and $\tau_{\ge m}:\SHA(F)\to \SHAt{\ge m}(F)$.  For $X\in \SHA(F)$ we define 
$$
\HH_m X := \tau_{\le m}\tau_{\ge m}X
$$ 
and call $\HH_0 X$ the \emph{heart} of $X$.  Morel identifies the heart $\SHAt{\le 0}(F)\cap \SHAt{\ge 0}(F)$ of the homotopy $t$-structure with the so-called category of \emph{homotopy modules}\footnote{These are $\ul K^{MW}_*(F)$-modules $\ul M_*$ equipped with \emph{contraction isomorphisms} $(\ul M_{n+1})_{-1}\cong \ul M_n$ where $(~)_{-1}$ is Morel's contraction construction \cite[Definition 4.3.10]{morel:t}.} but we will not use this result here.

The following lemma follows from \cite[\S2.3]{grso}.  We include a proof for completeness.

\begin{lemma}\label{lemma:heart}
Let $E$ be a motivic ring spectrum.  Then $\HH_0 E$ is a motivic ring spectrum as well.
\end{lemma}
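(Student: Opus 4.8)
The plan is to exhibit the endofunctor $\HH_0$ on $\SHA(F)$ as a composite of lax symmetric monoidal functors, so that it automatically carries monoids to monoids and hence ring spectra to ring spectra. The only non-formal ingredient I would need is the compatibility of the homotopy $t$-structure with the smash product, recorded in \cite[\S2.3]{grso} (and originally due to Morel \cite{morel:t}): namely $\SHAt{\ge a}(F)\wedge\SHAt{\ge b}(F)\subseteq\SHAt{\ge a+b}(F)$ for all $a,b\in\ZZ$. From this I would extract the three special cases actually used: $S^0\in\SHAt{\ge 0}(F)$; the subcategory $\SHAt{\ge 0}(F)$ of connective spectra is closed under $\wedge$; and $\SHAt{\ge 1}(F)\wedge\SHAt{\ge 0}(F)\subseteq\SHAt{\ge 1}(F)$.

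Granting this, the first step is to observe that the inclusion $\SHAt{\ge 0}(F)\hookrightarrow\SHA(F)$ is strong symmetric monoidal (it contains the unit and is closed under $\wedge$), so by doctrinal adjunction its right adjoint $\tau_{\ge 0}$ is lax symmetric monoidal; concretely, $\tau_{\ge 0}E$ is then a ring spectrum, with unit $S^0=\tau_{\ge 0}S^0\to\tau_{\ge 0}E$ and multiplication $\tau_{\ge 0}E\wedge\tau_{\ge 0}E\to\tau_{\ge 0}(E\wedge E)\to\tau_{\ge 0}E$. The second step is to make the truncation $\tau_{\le 0}\colon\SHAt{\ge 0}(F)\to\mathcal{H}$ monoidal, where $\mathcal{H}=\SHAt{\ge 0}(F)\cap\SHAt{\le 0}(F)$ is the heart. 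A morphism of $\SHAt{\ge 0}(F)$ is inverted by $\tau_{\le 0}$ precisely when its cone lies in $\SHAt{\ge 1}(F)$, and since $\SHAt{\ge 1}(F)\wedge\SHAt{\ge 0}(F)\subseteq\SHAt{\ge 1}(F)$ this class of morphisms is stable under smashing with any object of $\SHAt{\ge 0}(F)$. Hence $\tau_{\le 0}$ is a monoidal Bousfield localization: $\mathcal{H}$ inherits a symmetric monoidal structure $X\bar\otimes Y:=\tau_{\le 0}(X\wedge Y)$, the functor $\tau_{\le 0}$ becomes strong monoidal, and its right adjoint $j\colon\mathcal{H}\hookrightarrow\SHAt{\ge 0}(F)$ is lax monoidal, its structure map $jX\wedge jY\to j(X\bar\otimes Y)$ being the unit of the idempotent monad $j\tau_{\le 0}$.

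Finally I would assemble the factorization
\[
\SHA(F)\xrightarrow{\ \tau_{\ge 0}\ }\SHAt{\ge 0}(F)\xrightarrow{\ \tau_{\le 0}\ }\mathcal{H}\xrightarrow{\ j\ }\SHAt{\ge 0}(F)\hookrightarrow\SHA(F),
\]
whose composite is exactly $\HH_0$. Each arrow is lax symmetric monoidal, hence so is the composite, and lax monoidal functors preserve monoids; therefore $\HH_0 E$ is a ring spectrum, and the same argument (keeping track of the symmetries) shows it is commutative whenever $E$ is. I expect the only genuine obstacle to be invoking the connectivity estimate for smash products of connective motivic spectra in the appropriate generality (e.g.\ without unneeded perfectness hypotheses on $F$); once that is cited, every remaining step is the formal theory of monoidal adjunctions and monoidal localizations, which applies verbatim to the monoidal triangulated category $\SHA(F)$.
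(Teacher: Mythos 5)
Your strategy is the same as the paper's in spirit---use the compatibility of the homotopy $t$-structure with $\wedge$ to make $\HH_0$ carry ring structures---but you package it more formally via doctrinal adjunction and monoidal localization, where the paper constructs the two structure maps $\HH_0E\wedge\HH_0E\to\HH_0(\HH_0E\wedge\HH_0E)\to\HH_0E$ by hand and checks the monoid axioms directly. The first half of your argument (the inclusion $\SHAt{\ge 0}(F)\hookrightarrow\SHA(F)$ is strong monoidal, so $\tau_{\ge 0}$ is lax monoidal by doctrinal adjunction) is fine. The flaw is in your asserted characterization of $\tau_{\le 0}$-equivalences: it is \emph{not} true that a morphism $f$ of connective spectra is inverted by $\tau_{\le 0}$ precisely when $\cone(f)\in\SHAt{\ge 1}(F)$. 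Since $\HH_0(\cone f)$ is the cokernel of $\HH_0(f)$ in the heart, that condition is equivalent only to $\HH_0(f)$ being an epimorphism; the injectivity of $\HH_0(f)$ is a separate condition involving $\HH_1(\cone f)$. A minimal counterexample: for any nonzero heart object $A$, the map $A\to 0$ has $\cone=\Sigma A\in\SHAt{\ge 1}(F)$, yet $\tau_{\le 0}(A\to 0)$ is certainly not invertible. So the class of morphisms you describe is strictly larger than the class of $\tau_{\le 0}$-equivalences, and as written there is a genuine gap in the step where you identify the heart as a monoidal localization of $\SHAt{\ge 0}(F)$ at that class.

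The conclusion you need---that $\tau_{\le 0}$-equivalences between connective spectra are stable under $\wedge\,Z$ for connective $Z$---is nevertheless true, and the correct argument is precisely the observation the paper records. Smashing the cofiber sequence $\tau_{\ge 1}X\to X\to\HH_0 X$ with connective $Z$ yields $\tau_{\ge 1}X\wedge Z\in\SHAt{\ge 1}(F)$ by the connectivity of smash products, so $\HH_0(X\wedge Z)\cong\HH_0(\HH_0X\wedge Z)$; iterating in the other factor shows $\HH_0(X\wedge Z)$ depends only on $\HH_0 X$ and $\HH_0 Z$, hence $\HH_0(f\wedge\id_Z)$ is an isomorphism whenever $\HH_0(f)$ is. Substituting this for the faulty characterization repairs your proof: the factorization of $\HH_0$ into lax symmetric monoidal functors is then valid, and the conclusion that $\HH_0E$ is a (commutative, if $E$ is) ring spectrum follows.
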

\begin{proof}
The multiplication on $\HH_0E$ arises as a composite
\[
  \HH_0E\wedge \HH_0E\to \HH_0(\HH_0E\wedge \HH_0E)\to \HH_0E.
\]
We begin by constructing the second map.  To do so, first observe that $\HH_0(\HH_0E\wedge \HH_0E) \simeq \HH_0(\tau_{\ge 0}E\wedge \tau_{\ge 0}E)$.  Then $\HH_0(\HH_0E\wedge \HH_0E)\to \HH_0E$ arises by applying $\HH_0$ to the natural map $\tau_{\ge 0}E\wedge \tau_{\ge 0}E\to \tau_{\ge 0}E$.

To construct $\HH_0E\wedge \HH_0E\to \HH_0(\HH_0E\wedge \HH_0E)$, simply observe that whenever $A$ is connective, there is a natural map $A\to \HH_0A$ (which is just the natural map $A\to \tau_{\le 0}A$).

It is formal to check that the above map $\HH_0E\wedge \HH_0E\to \HH_0E$ together with the unit
\[
  S^0\to \HH_0S^0\to \HH_0E
\]
make $\HH_0E$ a monoid in $\SHA(F)$, as desired.
\end{proof}

\subsection{Cellularization}
Write $i:\SHA(F)^{\cell}\subseteq \SHA(F)$ for the homotopy category of motivic cell spectra, \emph{i.e.}~the localizing subcategory generated by $S^{m+n\alpha}$, for $m,n\in \Z$.
Recall from \cite[Proposition 7.3]{di:cell} that, on the level of homotopy categories, there is a functor 
$$
\Cell:\SHA(F)\to \SHA(F)^{\cell}
$$ 
which is right adjoint to the inclusion. As is standard, we will abuse notation and write again $\Cell(X)$ for $i\Cell(X)$. In particular, we have a natural map $\Cell(X) \to X$. Note also that $\Cell$ is a lax symmetric monoidal functor, as it is right adjoint to a symmetric monoidal functor. We immediately conclude the following.
 
\begin{lemma}\label{lemma:cell}
Let $E$ be a motivic ring spectrum.  Then $\Cell(E)$ is a motivic ring spectrum as well.\hfill\qedsymbol
\end{lemma}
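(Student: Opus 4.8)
The statement is immediate once one unwinds the monoidal structure of the cellularization adjunction, so the plan is just to make the two relevant categorical facts explicit and apply them: (i) a right adjoint to a strong symmetric monoidal functor is canonically lax symmetric monoidal, and (ii) a lax symmetric monoidal functor sends monoids to monoids.

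First I would check that the inclusion $i\colon\SHA(F)^{\cell}\hookrightarrow\SHA(F)$ is strong (in fact strict) symmetric monoidal. This holds because the smash product of two generators $S^{m+n\alpha}$ and $S^{m'+n'\alpha}$ is again a generator, the unit $S^0$ lies in $\SHA(F)^{\cell}$, and $X\mapsto X\wedge Y$ is exact and preserves coproducts; hence $\SHA(F)^{\cell}$ is closed under $\wedge$ and inherits the symmetric monoidal structure of $\SHA(F)$, with $i$ respecting it strictly. By doctrinal adjunction the right adjoint $\Cell$ then acquires a lax symmetric monoidal structure, whose coherence map $\Cell(X)\wedge\Cell(Y)\to\Cell(X\wedge Y)$ is the adjunct of $i\Cell(X)\wedge i\Cell(Y)\to X\wedge Y$ (two counits) and whose unit $S^0\to\Cell(S^0)$ is the adjunct of $\id_{S^0}$. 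This is precisely the lax structure invoked in the paragraph preceding the lemma.

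Now let $E$ be a motivic ring spectrum, that is, a monoid $(E,\mu,\eta)$ in $(\SHA(F),\wedge,S^0)$. Applying the lax symmetric monoidal functor $\Cell$ yields a monoid in $\SHA(F)^{\cell}$, with multiplication the composite $\Cell(E)\wedge\Cell(E)\to\Cell(E\wedge E)\xrightarrow{\Cell(\mu)}\Cell(E)$ and unit $S^0\to\Cell(S^0)\xrightarrow{\Cell(\eta)}\Cell(E)$; associativity and unitality follow formally from the coherence axioms of the lax structure. Since $i$ is also (lax) symmetric monoidal, its application sends this monoid to a monoid in $\SHA(F)$, so $i\Cell(E)$ --- which, following the paper's abuse of notation, we again write $\Cell(E)$ --- is a motivic ring spectrum.

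I do not anticipate any real obstacle: the whole content is the two standard facts above, and the only thing to keep careful track of is the passage between $\SHA(F)^{\cell}$ and $\SHA(F)$ along $i$. As a free bonus, since the counit of a doctrinal adjunction is a monoidal natural transformation, the canonical map $\Cell(E)\to E$ is automatically a morphism of motivic ring spectra.
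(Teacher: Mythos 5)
Your proof is correct and follows the paper's own (very brief) argument: the paper likewise observes that $\Cell$ is lax symmetric monoidal because it is right adjoint to the strong symmetric monoidal inclusion of cellular spectra, and concludes immediately that it preserves ring objects. Your elaboration via doctrinal adjunction and the remark that $\Cell(E)\to E$ is a ring map are both accurate but add nothing beyond what the paper leaves implicit.
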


\subsection{Field spectra and explicit primes in $\Spc(\Cc(F))$}\label{subsec:fields}
We now build some cellular field spectra and realize some explicit tensor triangular primes in $\Cc(F)$. We write 
$$
\HH_0^{\cell}(E) := \Cell(\HH_0(E)).
$$

\begin{lemma}\label{lemma:realize}
Fix $\pp\in \Spec^h(\kmw*F)$ and suppose there exists a (homotopy associative) ring spectrum $E$ such that $\pi_0(E)_*\cong L(\pp)$ as $\kmw*F$-modules.  
Then $\HH_0^{\cell}(E)$ is a field in $\SHA(F)^{\cell}$ and 
 $\rho^{\bullet}\bigl(\ker_{\Cc(F)}(\HH_0^{\cell}(E))\bigr) = \pp$.
\end{lemma}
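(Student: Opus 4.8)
The plan is to deduce the statement from the general fields-give-primes machinery (\aref{prop:ker}) together with a computation of $\pi_0$. The first task is to show that $\HH_0^{\cell}(E) = \Cell(\HH_0(E))$ is a field in $\SHA(F)^{\cell}$ in the sense of \aref{defn:field}; the second is to identify the image under $\rho^\bullet$ of its kernel prime in $\Cc(F)$. I would organize the argument as follows.

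First, I would show that $\HH_0^{\cell}(E)$ is a ring spectrum and compute its homotopy: by \aref{lemma:heart}, $\HH_0(E)$ is a motivic ring spectrum, and by \aref{lemma:cell}, applying $\Cell$ preserves this, so $\HH_0^{\cell}(E)$ is a ring in $\SHA(F)^{\cell}$. For the homotopy groups, note that $\pi_m(E)_n \cong \pi_m(\HH_0 E)_n$ for $m = 0$ and vanishes for $m<0$ since $L(\pp)$ is concentrated in simplicial degree $0$; in particular $\pi_0(\HH_0 E)_* \cong L(\pp)$. Since the spheres $S^{m+n\alpha}$ are cellular and $\pi_m(X)_n = [S^{m+n\alpha}, X]$, the counit $\Cell(\HH_0 E)\to \HH_0 E$ induces isomorphisms on all $\pi_m(-)_n$; hence $\pi_0(\HH_0^{\cell}(E))_* \cong L(\pp)$ as $\kmw*F$-modules.

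Next, I would prove $\HH_0^{\cell}(E)$ is a field by showing every module over it is free. Set $K := \HH_0^{\cell}(E)$, a ring in the tensor triangulated category $\Cc := \SHA(F)^{\cell}$. The key input is that $K$ is concentrated in the heart of the homotopy $t$-structure with $\pi_0(K)_* \cong L(\pp)$, a \emph{graded field} (all nonzero homogeneous elements invertible). Given a $K$-module $M$ in $\Cc$, I would build a free module mapping to it by choosing a set of homogeneous module generators: using the graded-field structure of $L(\pp)$, pick generators of $\pi_*(M)$ as an $L(\pp)$-module, realize each by a map $S^{m+n\alpha}\otimes K\to M$, and assemble $\bigoplus_i S^{m_i+n_i\alpha}\otimes K \to M$. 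One then checks this is an equivalence by computing homotopy: the source has homotopy a free $L(\pp)$-module, the map is an $L(\pp)$-module isomorphism on $\pi_*$ by construction, and both source and target lie in the heart (here one uses that tensoring with $K$ lands in modules concentrated in a single weight-line of the $t$-structure), so an isomorphism on homotopy sheaves is an equivalence. This shows $K$ is a field in $\Cc$, and restricting along $\Cc(F)\subseteq \SHA(F)^{\cell}$, \aref{prop:ker} yields that $\ker_{\Cc(F)}(K) \in \Spc(\Cc(F))$.

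Finally, I would identify $\rho^\bullet(\ker_{\Cc(F)}(K))$. Unwinding the definition, $\rho^\bullet(\ker_{\Cc(F)} K) = \{f\in \kmw*F \mid \ker_{\Cc(F)} K \in \supp(\cone f)\} = \{f \mid K\otimes \cone(f)\not\cong 0\}$. Since $\pi_0(K)_* \cong L(\pp) = \GrFrac(\kmw*F/\pp)$ and $K$ is in the heart, $K\otimes \cone(f)$ is computed on homotopy by the two-term complex $L(\pp)\xrightarrow{\bar f} L(\pp)$; this is acyclic (i.e.\ $K\otimes\cone(f)\cong 0$) precisely when multiplication by $f$ is invertible on $L(\pp)$, which happens exactly when $f\notin\pp$. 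Hence $\rho^\bullet(\ker_{\Cc(F)} K) = \pp$. The main obstacle I anticipate is the freeness argument: verifying that $K$-modules in the triangulated (not stable-$\infty$) setting are genuinely free requires care with the heart identification and with the fact that tensoring with $K$ collapses everything onto a single line of the homotopy $t$-structure on which $L(\pp)$ acts as a graded field; all other steps are essentially bookkeeping with $\pi_*$ and the definition of $\rho^\bullet$.
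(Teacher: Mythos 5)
Your overall architecture matches the paper's: use \aref{lemma:heart} and \aref{lemma:cell} to get a cellular ring spectrum, prove freeness of $\HH_0^{\cell}(E)$-modules by choosing a bihomogeneous basis of $\pi_*(-)_*$ over the graded field $L(\pp)$, invoke \aref{prop:ker}, and then compute $\rho^\bullet$ by unwinding definitions. The $\rho^\bullet$ computation at the end is fine.

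However, the justification you give for why the comparison map $\bigvee_i S^{m_i+n_i\alpha}\wedge K \to M$ is an equivalence has a real gap. You argue that ``both source and target lie in the heart \ldots\ so an isomorphism on homotopy sheaves is an equivalence.'' This is wrong on two counts. First, a $K$-module $M$ in $\SHA(F)^{\cell}$ need not lie in the heart of the homotopy $t$-structure: for instance $\Sigma K$, or more generally $X\wedge K$ for any cellular $X$, is a $K$-module with homotopy in arbitrary simplicial degrees, and the free module $\bigvee_i S^{m_i+n_i\alpha}\wedge K$ itself is only in the heart when all $m_i=0$. Second, even setting that aside, you only control the homotopy \emph{groups} $\pi_*(-)_*$ (sections over $\Spec F$), not the homotopy \emph{sheaves} $\ul\pi_*(-)_*$; a basis chosen for the $L(\pp)$-module $\pi_*(M)_*$ gives a map that is an isomorphism on $\pi_*(-)_*$, but one cannot conclude directly that it is an isomorphism on $\ul\pi_*(-)_*$, which is what the $t$-structure non-degeneracy argument would require. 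The ingredient you are missing is precisely the reason the statement is about $\HH_0^{\cell}(E)$ rather than $\HH_0(E)$: for \emph{cellular} motivic spectra, a map inducing an isomorphism on the bigraded homotopy groups $\pi_*(-)_*$ is already a weak equivalence (Dugger--Isaksen). Both source and target of the comparison map are cellular (the source is a wedge of cellular pieces, and $M$ is a module in $\SHA(F)^{\cell}$), so this criterion applies and the heart is irrelevant. The paper uses exactly this fact; your proof should replace the heart argument with it.
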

\begin{proof}
First, note that \aref{lemma:heart} and \aref{lemma:cell} imply that $\HH_0^{\cell}(E)$ is a cellular ring spectrum.

Let $M$ be a $\HH_0^{\cell}(E)$-module and note that $\pi_*(M)_*$ is an $L(\pp)$-module.  Since $L(\pp)$ is a graded field, we may choose a homogeneous basis for $\pi_*(M)_*$ as a free $\pi_*(\HH_0^{\cell}(E))_*$-module.  This basis induces a map
\[
  \bigvee_{\lambda\in \Lambda} \Sigma^\lambda \HH_0^{\cell}(E)\to M
\]
where each $\lambda$ is in $\ZZ\oplus \ZZ\{\alpha\}$ which induces an isomorphism on homotopy groups. This map is thus a weak equivalence since homotopy groups detect weak equivalences between cellular motivic spectra.  Hence $M$ is free, as desired, so $\HH_0^{\cell}(E)$ is a cellular motivic field spectrum.

By \aref{prop:ker}, $\ker_{\Cc(F)}(\HH_0^{\cell}E)$ is in $\Spc(\Cc(F))$. Moreover, 
\[
\begin{aligned}
  \rho^{\bullet}\bigl(\ker_{\Cc(F)}(\HH_0^{\cell}E)\bigr) &= 
  \{f\in \kmw*F\mid \cone(f)\notin \ker_{\Cc(F)}(\HH_0^{\cell}(E) ) \}\\
  &= \{f\in\kmw*F\mid \HH_0^{\cell}(E)\wedge \cone(f)\not\simeq *\}\\
  &= \{f\in\kmw*F\mid \HH_0^{\cell}(E)\xrightarrow{f}S^{*\alpha}\wedge\HH_0^{\cell}(E)\text{ is not a w.e.}\}\\
  &= \{f\in \kmw*F\mid L(\pp)\xrightarrow{\cdot f} L(\pp)\text{ is not an iso}\}\\
  &= \pp.
\end{aligned}
\]
\end{proof}
 
\begin{prop}\label{prop:construct}
If $\pp\in \Spec^h(\kmw*F)$ is of type (3), (4), (5), or (6) in \aref{thm:thornton}, then there exists a motivic ring spectrum $E\in \SHA(F)$ such that $\pi_0(E)_*\cong L(\pp)$.
\end{prop}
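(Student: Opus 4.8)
\emph{Strategy.} The plan is to realize each residue field listed in \aref{prop:res} as $\pi_0(E)_*$ of an explicit ring spectrum, obtained by a Bousfield localization, a rationalization, a mod-$p$ reduction, or $\eta$-inversion, applied to the sphere spectrum or to mod-$2$ motivic cohomology $H\FF_2$ — after first base changing to an algebraic or real closure and then pushing the result forward. The point is that over a separably or real closed field everything in \aref{prop:res} becomes transparent, because the Witt ring collapses to $\FF_2$ or to $\ZZ$.

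\emph{Reduction to closed fields.} I would set $F_\alpha := F^{\mathrm{sep}}$ when $\pp$ has type (3), and let $F_\alpha$ be the real closure of $(F,\alpha)$ when $\pp$ has type (4), (5) or (6) with associated ordering $\alpha$. Let $\pp'\subseteq\kmw*{F_\alpha}$ be the prime of the same type, so $L(\pp')\cong L(\pp)$ as graded rings by \aref{prop:res}. Given a homotopy-associative ring spectrum $E'\in\SHA(F_\alpha)$ with $\pi_0(E')_*\cong L(\pp')$ as $\kmw*{F_\alpha}$-modules, put $E := i_*E'$ for $i\colon\Spec F_\alpha\to\Spec F$; then $E$ is a ring spectrum over $F$ because $i_*$ is lax symmetric monoidal. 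Since each Tate twist $S^{n\alpha}$ is $\otimes$-invertible (hence dualizable), the projection formula $S^{n\alpha}\wedge i_*E'\simeq i_*(S^{n\alpha}\wedge E')$ holds purely formally (no hypothesis on $i$ is needed), so together with the adjunction $i^*\dashv i_*$ and $i^*S^0 = S^0$ one gets
\[
  \pi_0(E)_n = [S^0,\,S^{n\alpha}\wedge i_*E']_{\SHA(F)}\;\cong\;[S^0,\,S^{n\alpha}\wedge E']_{\SHA(F_\alpha)} = \pi_0(E')_n .
\]
The $\kmw*F$-module structure on $\pi_0(E)_*$ is that induced by $\kmw*F\to\kmw*{F_\alpha}\to L(\pp')$, and its kernel is a homogeneous prime. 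Checking which homogeneous classes this composite kills ($2$ or $h$, and in types (4)--(6) the symbols $[u]$ with $u\in P_\alpha$) and which it does not ($\eta$ in types (3), (4), (6); $[-1]$ in type (5); rational primes other than $p$ in type (6)), \aref{thm:thornton} forces the kernel to be exactly $\pp$. Hence $\pi_0(E)_*\cong L(\pp)$ as $\kmw*F$-modules, and it suffices to build $E'$ over the closed field $F_\alpha$.

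\emph{Constructions over the closed field.} Here I would use that $S^0$ is compact, so $\pi_0(S^0[\eta^{-1}])_*\cong\eta^{-1}\kmw*{F_\alpha}\cong W(F_\alpha)[\eta^{\pm1}]$, with $W(F^{\mathrm{sep}})\cong\FF_2$ and $W(F_\alpha)\cong\ZZ$ when $F_\alpha$ is real closed, and that $S^0[\eta^{-1}]$ is connective (a filtered homotopy colimit of Tate twists of the connective sphere), so that $\pi_{-1}(S^0[\eta^{-1}])_* = 0$ and cofibers of $p$ or rationalizations introduce no Tor terms on $\pi_0$. Then one may take
\[
  E' = \begin{cases}
    S^0[\eta^{-1}] & \text{over }F^{\mathrm{sep}},\text{ for type (3): }\pi_0(E')_*\cong\FF_2[\eta^{\pm1}];\\
    S^0[\eta^{-1}]_{\QQ} & \text{over }F_\alpha,\text{ for type (4): }\pi_0(E')_*\cong\QQ[\eta^{\pm1}];\\
    S^0[\eta^{-1}]/p & \text{over }F_\alpha,\text{ for type (6), }p\ne2\colon\pi_0(E')_*\cong\FF_p[\eta^{\pm1}];\\
    [-1]^{-1}H\FF_2 & \text{over }F_\alpha,\text{ for type (5): }\pi_0(E')_*\cong\FF_2[[-1]^{\pm1}],
  \end{cases}
\]
the last computation using $\pi_0(H\FF_2)_*\cong K^M_*(F_\alpha)/2\cong H^*_{\mathrm{et}}(F_\alpha;\FF_2)\cong\FF_2[[-1]]$ for $F_\alpha$ real closed. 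Bousfield localizations at central homotopy classes, rationalizations, and cofibers of multiplication by an odd prime are again homotopy-associative ring spectra, and in each case $\pi_0(E')_*\cong L(\pp')$ as $\kmw*{F_\alpha}$-modules can be read off from which homogeneous classes of $\kmw*{F_\alpha}$ act invertibly and which act by zero.

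\emph{Main obstacle.} The work is chiefly bookkeeping with \aref{thm:thornton}: after the two reductions one must pin down that the structure map $\kmw*F\to L(\pp')$ has kernel precisely $\pp$ — in particular that the real-closure step selects the correct ordering $\alpha$ — and that each closed-field spectrum genuinely has the claimed $\pi_0$ together with its $\kmw*{F_\alpha}$-action. The only non-formal technical point is the (standard) homotopy-associative ring structure on the mod-$p$ Moore-type cofiber $S^0[\eta^{-1}]/p$ for $p$ odd; note in particular that the pushforward $i_*$ causes no difficulty even when $F_\alpha/F$ is infinite, precisely because the projection formula above is used only for the invertible objects $S^{n\alpha}$.
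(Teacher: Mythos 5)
Your proposal is correct and follows essentially the same template as the paper's proof: fix an algebraic/real closure $F_\alpha$ of $F$, construct a ring spectrum $E'$ over $F_\alpha$ whose $\pi_0(E')_*$ is the desired graded field, push forward along the lax monoidal right adjoint $i_*$ (resp.\ $j_*$), and use adjunction plus invertibility of the Tate twists to identify $\pi_0(i_*E')_*\cong\pi_0(E')_*$ as a $\kmw*F$-module. For type (5) you use the identical spectrum $j_*[-1]^{-1}H\FF_2$. The one genuine variation is in types (3), (4), (6): the paper takes Hornbostel's Witt $K$-theory spectrum $KT$ over the closed field, with $\pi_0(KT)_*\cong W(F_\alpha)[\eta^{\pm1}]$, whereas you instead take the $\eta$-inverted sphere $S^0[\eta^{-1}]$, which has the same $\pi_0(-)_*$ by compactness of $S^0$ plus Morel's connectivity theorem. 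Since only the heart $\HH_0^{\cell}$ is used downstream in \aref{lemma:realize}, either choice works, and yours is arguably more self-contained: it avoids the representability input of Hornbostel, at the modest cost of noting that $\eta$ is central enough to form $S^0[\eta^{-1}]$ as a homotopy ring (i.e.\ $(1-\varepsilon)\eta = h\eta=0$, the Witt relation). Your remarks on the mod-$p$ Moore ring structure and the projection-formula justification are fine and match what the paper leaves implicit; the paper sidesteps the projection formula by directly moving $S^{-*\alpha}$ across the adjunction using $i^*S^{-*\alpha}\simeq S^{-*\alpha}$, which is slightly leaner but amounts to the same thing.
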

\begin{proof}
First suppose that $\pp = ([F^\times],2)$ is of type (3) so that $L(\pp)\cong \FF_2[\eta^{\pm 1}]$.  Fix an embedding $i:F\hookrightarrow \overline{F}$ of $F$ into an algebraic closure, $\overline{F}$. The functor 
$i^*:\SHA(F)\to \SHA(\overline{F})$ has a right adjoint, $i_*$, which is lax-symmetric monoidal. In particular, $i_*$ preserves motivic ring spectra. 
Let $KT_{\overline{F}}$ denote Witt $K$-theory \cite{hornbostel:rep} over $\overline{F}$.  Then we may take $E = i_*KT_{\overline{F}}$.  Indeed, we may compute
\[
\begin{aligned}
  \pi_0(E)_* &\cong [S^0,S^{*\alpha}\wedge i_*KT_{\overline{F}}]_{\SHA(F)}\\
  &\cong [S^{-*\alpha},i_*KT_{\overline{F}}]_{\SHA(F)}\\
  &\cong [S^{-*\alpha},KT_{\overline{F}}]_{\SHA(\overline{F})}\\
  &\cong W(\overline{F})[\eta^{\pm 1}]\\
  &\cong \FF_2[\eta^{\pm 1}]
\end{aligned}
\]
since the Witt ring of an algebraically closed field is always isomorphic to $\FF_2$ via the rank homomorphism.

Now suppose that $F$ is formally real with ordering $\alpha$.  (This is necessary for primes of type (4)-(6) to exist.)  Fix an embedding $j:F\hookrightarrow F_\alpha$.  Suppose that $\pp$ is of type (4) so that $L(\pp)\cong \QQ[\eta^{\pm 1}]$.  Then we may take 
$E = j_* KT_{F_\alpha}\otimes \QQ$ and a computation similar to that above gives
\[
  \pi_0(E)_*\cong \QQ[\eta^{\pm 1}]
\]
since $W(F_\alpha)\cong \ZZ$.

Suppose that $\pp$ is of type (6) so that $L(\pp)\cong \FF_p[\eta^{\pm 1}]$.  Then we may take $E = j_* KT_{F_\alpha}/p$, and we again get the desired isomorphism
\[
  \pi_0(E)_*\cong \FF_p[\eta^{\pm 1}].
\]

Now suppose $\pp = ([P_\alpha],\eta,2)$ is of type (5) so that $L(\pp)\cong \FF_2[[-1]^{\pm 1}]$.  Let $(H\FF_2)_{F_\alpha}$ denote the mod $2$ motivic cohomology spectrum over $F_\alpha$.  
Let $[-1]^{-1}(H\FF_2)_{F_\alpha}$ denote the colimit of iterated multiplication by $[-1]$ on $(H\FF_2)_{F_\alpha}$.  Then we may take 
$E = j_* [-1]^{-1}(H\FF_2)_{F_\alpha}$.  Indeed, we compute
\[
\begin{aligned}
  \pi_0(E)_* &\cong [S^0,S^{*\alpha}\wedge j_* [-1]^{-1}(H\FF_2)_{F_\alpha}]_{\SHA(F)}\\
  &\cong [S^{-*\alpha},j_*[-1]^{-1}(H\FF_2)_{F_\alpha}]_{\SHA(F)}\\
  &\cong [S^{-*\alpha},[-1]^{-1}(H\FF_2)_{F_\alpha}]_{\SHA(F_\alpha)}\\
  &\cong [-1]^{-1}K^M_*(F_\alpha)\\
  &\cong \FF_2[[-1]^{\pm 1}].
\end{aligned}
\]
The penultimate isomorphism utilizes the Milnor conjecture \cite{voe:mod2}, while the final isomorphism is an invocation of \cite[Example 1.6]{milnor:quadK}.
\end{proof}

Combining \aref{lemma:realize} and \aref{prop:construct}, we arrive at the following list of explicit primes in $\Cc(F)$ and their images in $\Spec^h(\kmw*F)$.

\begin{theorem}\label{thm:explicit}
Fix an embedding $i:F\hookrightarrow \overline{F}$ of $F$ into its algebraic closure.  If $\alpha\in X_F\ne \varnothing$, fix an embedding $j = j_\alpha:F\hookrightarrow F_\alpha$ of $F$ into its real closure with respect to $\alpha$.  Let $E_3 = i_*KT_{\overline{F}}$, $E_4 = j_*KT_{F_\alpha}\otimes \QQ$, $E_5 = j_*[-1]^{-1}(H\FF_2)_{F_\alpha}$, and $E_6 = j_*KT_{F_\alpha}/p$.  
Writing 
$$
\Pp_{i} =  \ker_{\Cc(F)}(\HH_0^{\cell}(E_i)),
$$ 
we have
\[
\begin{aligned}
  \rho^{\bullet}(\Pp_{3}) &= ([F^\times],2)\text{,}\\
  \rho^{\bullet}(\Pp_{4}) &= ([P_\alpha],h)\text{,}\\
 \rho^{\bullet}(\Pp_{5}) &= ([P_\alpha],\eta,2)\text{,}\\
\rho^{\bullet}(\Pp_{6}) &= ([P_\alpha],h,p)\text{.}
\end{aligned}
\]\hfill\qedsymbol
\end{theorem}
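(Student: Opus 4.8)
The plan is to read off the theorem as a direct combination of \aref{prop:construct} and \aref{lemma:realize}, the only work being to match each explicit spectrum $E_i$ with the homogeneous prime it realizes.

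First I would check that each $E_i$ is a homotopy associative (indeed homotopy commutative) ring spectrum, so that \aref{lemma:realize} applies: $KT$ and $H\FF_2$ are motivic ring spectra, the base-change right adjoints $i_*$ and $j_*$ are lax symmetric monoidal and hence preserve ring spectra, and rationalization $(-)\otimes\QQ$, reduction $(-)/p$, and the localization $[-1]^{-1}(-)$ likewise preserve ring structure. Next I would recall the computations carried out in the proof of \aref{prop:construct}, namely $\pi_0(E_3)_*\cong\FF_2[\eta^{\pm 1}]$, $\pi_0(E_4)_*\cong\QQ[\eta^{\pm 1}]$, $\pi_0(E_5)_*\cong\FF_2[[-1]^{\pm 1}]$, and $\pi_0(E_6)_*\cong\FF_p[\eta^{\pm 1}]$ for $p\ne 2$, each as a $\kmw*F$-module. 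Comparing with the list in \aref{prop:res}, these are precisely $L([F^\times],2)$, $L([P_\alpha],h)$, $L([P_\alpha],\eta,2)$, and $L([P_\alpha],h,p)$.

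Finally I would invoke \aref{lemma:realize} four times, once for each $E_i$ with $\pp$ the residue prime identified above. This yields that each $\HH_0^{\cell}(E_i)$ is a field spectrum in $\SHA(F)^{\cell}$, that each $\Pp_i=\ker_{\Cc(F)}(\HH_0^{\cell}(E_i))$ is a tensor triangular prime of $\Cc(F)$ via \aref{prop:ker}, and that $\rho^\bullet(\Pp_i)$ equals the corresponding prime $([F^\times],2)$, $([P_\alpha],h)$, $([P_\alpha],\eta,2)$, or $([P_\alpha],h,p)$ — the four displayed equalities. I do not expect a genuine obstacle here: the substance resides in \aref{lemma:realize} and \aref{prop:construct}, and the theorem is essentially a bookkeeping corollary. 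The one point to keep an eye on is that the $\kmw*F$-module structures on the Witt and Milnor $K$-theory groups of the extension fields $\overline F$ and $F_\alpha$ are the ``obvious'' ones matching \aref{prop:res} (pulled back along $i$, resp.\ $j$), but this identification is already built into the computations in the proof of \aref{prop:construct}.
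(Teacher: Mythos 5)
Your proposal is correct and matches the paper's approach exactly: the paper treats \aref{thm:explicit} as an immediate corollary of \aref{lemma:realize} and \aref{prop:construct} (hence the $\qed$ at the end of the statement with no separate proof), which is precisely the combination you spell out. The extra detail you supply — verifying each $E_i$ is a ring spectrum and matching $\pi_0(E_i)_*$ against the list in \aref{prop:res} — is the bookkeeping the paper leaves implicit.
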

 
\begin{remark}\label{rmk:cellfields}
In \aref{prop:topmot} we will determine explicit tensor triangular primes living over primes in $\kmw*F$ of type (1) and (2).  These are realized as acyclics for field spectra in \aref{rmk:kersing}.
\end{remark}

\section{Betti realization and tensor triangular primes}\label{sec:betti}
In this section, we study the role of Betti realization functors in determining tensor triangular primes in $\SHA(F)^c$.  We consider both the complex Betti realization functor
\[
  \RE_{B,i}:\SHA(F)^c\longrightarrow \SH^{\mathrm{fin}}
\]
associated with a complex embedding $i:F\hookrightarrow \CC$ and, for each $\alpha\in X_F$, the $C_2$-equivariant Betti realization functor
\[
  \RE_{B,j}^{C_2}:\SHA(F)^c\longrightarrow \SH(C_2)^c
\]
associated with an order-preserving embedding $j:F\hookrightarrow \RR$.  See \cite[\S4.4]{ho:gal} for a review of these constructions.  We will frequently leave the embedding into $\CC$ or $\RR$ implicit and call these functors $\RE_B$ and $\RE_B^{C_2}$, respectively.

In order to study $\Spc(\RE_B)$ and $\Spc(\RE_B^{C_2})$, we will need to know the structure of the spaces $\Spc(\SH^{\fin})$ and $\Spc(\SH(C_2)^c)$, respectively.  We recall these spaces in the next two subsections, following Balmer \cite{balmer:sss} and Balmer-Sanders \cite{balmersanders}, respectively.  In the final subsection, we present our results on $\Spc(\RE_B)$ and $\Spc(\RE_B^{C_2})$.

\subsection{Tensor triangular primes in $\SH^{\fin}$}
Building on \cite{hs:nil2}, Balmer determines the structure of $\Spc(\SH^{\fin})$ in \cite[\S 9]{balmer:sss}.  We briefly recall the result here.

Let $p$ be a rational prime and let $n$ be a positive integer.  Let $K(p,n-1)$ denote $(n-1)$-th Morava $K$-theory at the prime $p$, with the convention that $K(p,0) = H\QQ$, the rational Eilenberg-MacLane spectrum.  Let $\Cc_{p,n}$ denote the kernel of $K(p,n-1)$-homology.\footnote{Balmer sets $\Cc_{p,n}$ equal to the kernel of $K(p,n-1)\wedge (~)_{(p)}$.  Since $K(p,n-1)$ is already $p$-local and $p$-localization is smashing, we have $\ker K(p,n-1) = \ker K(p,n-1)\wedge (~)_{(p)}$.}  In particular, $\Cc_{p,1} = \Cc_{q,1}$ for all $p$, $q$, and we denote this set $\Cc_{0,1}$.  Let $\Cc_{p,\infty}$ denote the set of $p$-locally trivial spectra.

\begin{theorem}[{\cite[Corollary 9.5]{balmer:sss}}]\label{thm:top}
Every element of $\Spc(\SH^{\fin})$ is of the form $\Cc_{p,n}$ for some rational prime $p$ and $n$ a positive integer or $\infty$.  The only duplication occurs when $n=1$, in which case $\Cc_{p,1} = \Cc_{0,1}$, the set of finite torsion spectra.  The Hasse diagram for $\Spc(\SH^{\fin})$ takes the form shown in \aref{fig:top} and the closure of $S\subseteq \Spc(\SH^{\fin})$ consists of all the elements in or above $S$ in the Hasse diagram.
\end{theorem}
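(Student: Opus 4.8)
The plan is to reproduce Balmer's analysis of $\Spc(\SHfin)$ from \cite[\S 9]{balmer:sss}, whose engine is the Hopkins--Smith nilpotence and periodicity theorems \cite{hs:nil2}. I would organize it as three movements: exhibit the $\Cc_{p,n}$ as honest tensor triangular primes, show there are no others, and then read off the topology. For the first movement, each $\Cc_{p,n}$ is visibly a thick $\otimes$-ideal and is proper since $S^0\notin\Cc_{p,n}$; the content is primality. For $n\ge 1$ one uses that $K(p,n-1)$ (with $K(p,0)=H\QQ$) is a field spectrum, so $K(p,n-1)_*(X\wedge Y)\cong K(p,n-1)_*(X)\otimes_{K(p,n-1)_*}K(p,n-1)_*(Y)$, whence a product lands in $\Cc_{p,n}$ only if a factor does. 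For $\Cc_{p,\infty}=\ker\bigl((-)_{(p)}\bigr)$ one instead uses that a nonzero finite $p$-local spectrum has nonvanishing $K(p,m)$-homology for $m\gg 0$, so $X_{(p)}\wedge Y_{(p)}$ cannot vanish with both factors nonzero.

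For the second movement, a prime $\mathcal P$ is in particular a thick $\otimes$-ideal of $\SHfin$, so I would invoke the classification of these obtained by globalizing the thick subcategory theorem (the thick subcategories of $\SHfin_{(p)}$ are exactly the type-$\ge n$ categories): every proper thick $\otimes$-ideal of $\SHfin$ has the form $\bigcap_p\Cc_{p,f(p)}$ for a function $f$ from rational primes to $\{1,2,\dots,\infty\}$. One then checks that such an intersection is prime precisely when it equals a single $\Cc_{p,n}$: otherwise, witnesses such as $S/p$ and $S/q$ (for which $S/p\wedge S/q\simeq 0$, yet neither lies in the ideal) violate primality --- the same mechanism showing $\{0\}\subsetneq\SHfin$ is not prime. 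The sole coincidence $\Cc_{p,1}=\Cc_{0,1}$ is forced by the convention $K(p,0)=H\QQ$, while the $\Cc_{p,n}$ with $n\ge 2$ are pairwise distinct, distinguished by which $p$-local type-$n$ spectra and mod-$\ell$ Moore spectra they contain.

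For the third movement, recall the general fact that in $\Spc$ the closure of a point $\mathcal P$ is $\{\mathcal Q\mid\mathcal Q\subseteq\mathcal P\}$; thus the Hasse diagram is literally the inclusion poset of the $\Cc_{p,n}$. The periodicity theorem furnishes finite spectra of every type, which makes the chains $\Cc_{0,1}\supsetneq\Cc_{p,2}\supsetneq\Cc_{p,3}\supsetneq\cdots\supsetneq\Cc_{p,\infty}$ strict (and forces no relations across distinct $p$), and since every $\Cc_{p,n}$, including $\Cc_{p,\infty}$ (whose objects are torsion), is contained in $\Cc_{0,1}$, the latter is the unique generic point. Consequently the closure of a subset is everything at or above it in the diagram, and the $\Cc_{p,\infty}$ are the closed points, as claimed.

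The \emph{main obstacle} is that every substantive ingredient is imported: the nilpotence theorem (so that the Morava $K$-theories jointly detect finite spectra, and for the thick subcategory theorem) and the periodicity theorem (so that type-$n$ finite spectra exist, making the chains strict). Granting those, the remaining genuine labour is the passage from the $p$-local thick subcategory classification to thick $\otimes$-ideals of the integral category $\SHfin$ --- delicate because a single finite spectrum can be $p$-locally trivial for infinitely many $p$ --- and the verification that this globalization introduces no unexpected primes.
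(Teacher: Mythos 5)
The paper does not prove this result; it is quoted verbatim as \cite[Corollary 9.5]{balmer:sss}, so there is no internal proof to compare against. Your sketch is a faithful and correct reconstruction of the Balmer/Hopkins--Smith argument: establishing the $\Cc_{p,n}$ as primes via K\"unneth and type-additivity, classifying thick $\otimes$-ideals of $\SHfin$ by type functions $p\mapsto f(p)$, extracting the primes among these, and reading the topology off the inclusion poset via $\overline{\{\mathcal P\}}=\{\mathcal Q : \mathcal Q\subseteq\mathcal P\}$. The only nuance worth flagging is a small difference in route: Balmer's \S9 organizes the globalization step by first proving that $\coprod_p\Spc(\SHfin_{(p)})\to\Spc(\SHfin)$ is surjective (using his comparison map $\rho$ to $\Spec\ZZ$ and the surjectivity criterion of his Theorem~7.13), and only then reads off the primes from the known $\Spc(\SHfin_{(p)})$; you instead classify all thick $\otimes$-ideals of the integral category directly and then sieve for primality. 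Both hinge on the same imported inputs --- nilpotence (so the Morava $K$-theories jointly detect, and the thick subcategory theorem holds) and periodicity (so the chains are strict) --- and you correctly identify the passage from $p$-local to integral as the genuinely delicate point, so the two routes are essentially equivalent in content and in what they take on faith.
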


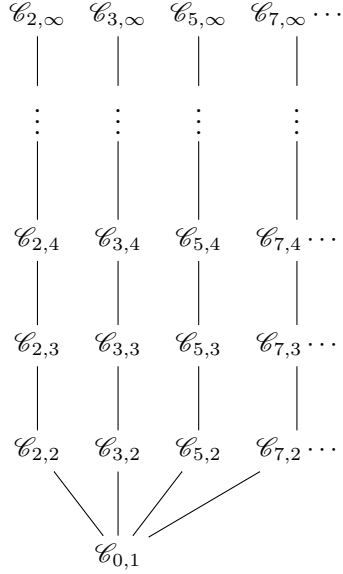
\begin{figure}
\[
\xymatrixcolsep{.25pc}\xymatrix{
  \Cc_{2,\infty}\ar@{-}[d] &\Cc_{3,\infty}\ar@{-}[d] &\Cc_{5,\infty}\ar@{-}[d] &\Cc_{7,\infty}\cdots\ar@{-}[d]\\
  \vdots\ar@{-}[d] &\vdots\ar@{-}[d] &\vdots\ar@{-}[d] &\vdots\ar@{-}[d]\\
  \Cc_{2,4}\ar@{-}[d] &\Cc_{3,4}\ar@{-}[d] &\Cc_{5,4}\ar@{-}[d] &\Cc_{7,4}\cdots\ar@{-}[d]\\
  \Cc_{2,3}\ar@{-}[d] &\Cc_{3,3}\ar@{-}[d] &\Cc_{5,3}\ar@{-}[d] &\Cc_{7,3}\cdots\ar@{-}[d]\\
  \Cc_{2,2}\ar@{-}[dr] &\Cc_{3,2}\ar@{-}[d] &\Cc_{5,2}\ar@{-}[dl] &\Cc_{7,2}\cdots\ar@{-}[dll]\\
  &\Cc_{0,1}
}\]
\caption{The Hasse diagram for $\Spc(\SH^{\fin})$.  Inclusion of tensor triangular primes is downwards, and closure is upwards in the diagram.}\label{fig:top}
\end{figure}  

\subsection{Tensor triangular primes in $\SH(C_2)^c$}
In \cite{balmersanders}, Balmer and Sanders completely determine the tensor triangular spectrum $\Spc(\SH(G)^c)$ for $G$ a finite group of square-free order.  Here we recall their results, specializing to the group $G=C_2$.

Let $p$ be a rational prime or $0$, let $n$ be a positive integer or $\infty$, let $H$ be a subgroup of $C_2$, and let $\Phi^H:\SH(C_2)\to \SH$ denote the geometric fixed points functor for $H$.  Define
\[
  \Pp(H,p,n) = (\Phi^H)^{-1}(\Cc_{p,n}).
\]

\begin{theorem}[{\cite[Theorem 8.12 and Diagram (1.3)]{balmersanders}}]\label{thm:bs}
Every element of $\Spc(\SH(C_2)^c)$ is of the form $\Pp(H,p,n)$ for precisely one $H$, $p$, and $n$.  The Hasse diagram for $\Spc(\SH(C_2)^c)$ takes the form show in \aref{fig:c2} and the closure of $S\subseteq \Spc(\SH(C_2)^c)$ consists of all the elements in or above $S$ in the Hasse diagram.
\end{theorem}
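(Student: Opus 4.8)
The statement is quoted from Balmer and Sanders \cite{balmersanders}; I will outline their strategy, specialized to $G = C_2$. The plan has two parts: first cover $\Spc(\SH(C_2)^c)$ by the images of two copies of the nonequivariant spectrum $\Spc(\SH^{\fin})$, one for each subgroup $H \le C_2$, using the geometric fixed points functors; then compute the fibers of this covering together with the resulting topology.

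For the covering step I would use that each geometric fixed points functor $\Phi^H\colon \SH(C_2)^c \to \SH^{\fin}$ is tensor triangulated, hence induces a continuous map $\Spc(\Phi^H)\colon \Spc(\SH^{\fin}) \to \Spc(\SH(C_2)^c)$; by Balmer's description of $\Spc(\SH^{\fin})$ recalled in \aref{thm:top}, its image is exactly $\{\Pp(H,p,n)\}$. The crucial claim is that $\coprod_{H\le C_2}\Spc(\Phi^H)$ is surjective, which holds because the geometric fixed points functors jointly detect $\otimes$-nilpotence — by the Hopkins--Smith nilpotence theorem \cite{hs:nil2} together with the joint conservativity of $\Phi^e$ and $\Phi^{C_2}$ — and $\SH(C_2)^c$ is rigid, so that a surjectivity criterion in the spirit of the one recalled after \aref{thm:main} applies. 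Hence every tensor triangular prime of $\SH(C_2)^c$ is of the form $\Pp(H,p,n)$.

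Next I would pin down when $\Pp(H,p,n) = \Pp(K,q,m)$ and, more substantially, the inclusions among these primes. That no coincidence occurs across different subgroups can be seen cheaply with test objects — for instance $\Phi^{C_2}(C_2/e_+)\simeq *$ while the underlying spectrum of $C_2/e_+$ is $K(p,n-1)$-nontrivial for every $p$ and $n$, so $C_2/e_+$ lies in $\Pp(C_2,q,m)\setminus\Pp(e,p,n)$ for all choices — and more systematically by feeding in Balmer's comparison map $\rho\colon\Spc(\SH(C_2)^c)\to\Spec(A(C_2))$ to the Burnside ring $A(C_2)=\pi_0^{C_2}(\SS_{C_2})$: the two ring maps $A(C_2)\to\ZZ$ induced on $\pi_0$ by $\Phi^e$ and $\Phi^{C_2}$ exhibit $\Spec(A(C_2))$ as two copies of $\Spec\ZZ$ meeting only over the prime $2$, so $\rho$ detects $H$ and detects $p$ whenever $p\ne 2$. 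Within a fixed subgroup $H$ the pair $(p,n)$ is recovered exactly as nonequivariantly, by running the nonequivariant classification on $\Phi^H$ of finite type-$n$ spectra, so all that remains is the behavior at $p=2$.

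The heart of the matter — and the step I expect to be the main obstacle — is the chromatic behavior of $\Phi^{C_2}$ at the prime $2$. Here I would build finite $C_2$-spectra out of representation spheres $S^{\pm\sigma}$ and cones of the Euler class $a_\sigma\colon S^0\to S^\sigma$, arrange their underlying spectra to have a prescribed chromatic type, and compute the type of the geometric fixed points; the expected upshot is a blueshift by one at $p=2$ (with transparent behavior at odd primes, where the $C_2$-category splits idempotently). This is what produces the interlocking of the $\Pp(e,2,-)$ and $\Pp(C_2,2,-)$ towers displayed in \aref{fig:c2}. Once the inclusion poset is known, the topology — and in particular the closure operator — is pinned down by identifying the Thomason-closed subsets, just as in the nonequivariant computation underlying \aref{thm:top}. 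Steps one and two are formal given the nonequivariant picture and general tensor-triangular geometry; the blueshift computation, which rests on Hopkins--Smith in the presence of a group action, is where the genuinely equivariant work lies.
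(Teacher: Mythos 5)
The paper does not prove \aref{thm:bs}; it is quoted directly from Balmer--Sanders \cite{balmersanders}, so there is no in-house argument to compare against. Your outline does track the Balmer--Sanders strategy specialized to $G=C_2$: cover $\Spc(\SH(C_2)^c)$ by the images of $\Spc(\Phi^e)$ and $\Spc(\Phi^{C_2})$, separate the two sheets using the comparison map to $\Spec(A(C_2))$, identify the pieces away from the group order via the idempotent splitting and the nonequivariant classification, and pin down the interaction of the two towers at $p=2$ via a blueshift estimate for geometric fixed points --- the last being, as you correctly emphasize, where the genuinely equivariant work lies.

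One caution on the surjectivity step. Joint conservativity of geometric fixed points (which holds for arbitrary, not just compact, $G$-spectra) is not formally the same thing as joint $\otimes$-nilpotence detection; the passage from the former to the latter uses compactness of the unit and a colimit argument on $z^{\otimes\infty}$, and it is the nilpotence-detection statement, not bare conservativity, that feeds into the surjectivity criterion you invoke. Hopkins--Smith is not what turns conservativity into nilpotence detection here; it enters separately, in identifying the target $\Spc(\SH^{\fin})$ as in \aref{thm:top} and in running the type-theoretic analysis through $\Phi^H$. With that attribution straightened out, the skeleton you give matches the cited proof.
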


\begin{figure}
\[
\xymatrixcolsep{.25pc}\xymatrix{
\cdots\Pp(C_2,5,\infty)\ar@{-}[d] &\Pp(C_2,3,\infty)\ar@{-}[d] &\Pp(C_2,2,\infty)\ar@{-}[d]\ar@{-}[r] &\Pp(e,2,\infty)\ar@{-}[d] &\Pp(e,3,\infty)\ar@{-}[d] &\Pp(e,5,\infty)\cdots\ar@{-}[d]\\
\vdots\ar@{-}[d] &\vdots\ar@{-}[d] &\vdots\ar@{-}[d]\ar@{-}[d] &\vdots\ar@{-}[d]\ar@{-}[dl]&\vdots\ar@{-}[d]&\vdots\ar@{-}[d]\\
\cdots\Pp(C_2,5,4)\ar@{-}[d] &\Pp(C_2,3,4)\ar@{-}[d] &\Pp(C_2,2,4)\ar@{-}[d] &\Pp(e,2,4)\ar@{-}[dl]\ar@{-}[d] &\Pp(e,3,4)\ar@{-}[d] &\Pp(e,5,4)\cdots\ar@{-}[d]\\
\cdots\Pp(C_2,5,3)\ar@{-}[d] &\Pp(C_2,3,3)\ar@{-}[d] &\Pp(C_2,2,3)\ar@{-}[d] &\Pp(e,2,3)\ar@{-}[dl]\ar@{-}[d] &\Pp(e,3,3)\ar@{-}[d] &\Pp(e,5,3)\cdots\ar@{-}[d]\\
\cdots\Pp(C_2,5,2)\ar@{-}[drr] &\Pp(C_2,3,2)\ar@{-}[dr] &\Pp(C_2,2,2)\ar@{-}[d] &\Pp(e,2,2)\ar@{-}[dl]\ar@{-}[d] &\Pp(e,3,2)\ar@{-}[dl] &\Pp(e,5,2)\cdots\ar@{-}[dll]\\
&&\Pp(C_2,0,1)&\Pp(e,0,1)
}\]
\caption{The Hasse diagram for $\Spc(\SH(C_2)^c)$.  Inclusion of tensor triangular primes is downwards, and closure is upwards in the diagram.}\label{fig:c2}
\end{figure}

\subsection{The maps $\Spc(\RE_B)$ and $\Spc(\RE_B^{C_2})$}
\begin{prop}\label{prop:topmot}
Suppose $F$ embeds into $\CC$ with associated Betti realization functor $\RE_B$.  Then the composite map $\rho^\bullet\circ \Spc(\RE_B):\Spc(\SH^\fin)\to \Spec^h(\kmw*F)$ takes the following values:
\[
\begin{aligned}
  \Cc_{0,1}&\longmapsto ([F^\times],\eta)\text{,}\\
  \Cc_{p,n}&\longmapsto ([F^\times],p,\eta)\text{ for all $p$ and $n\ge 2$.}
\end{aligned}
\]
In particular, the image of $\rho^\bullet\circ \Spc(\RE_B)$ is
\[
  V_\eta\smallsetminus \{([P_\alpha],2,\eta)\mid \alpha\in X_F\}\text{,}
\]
the collection of primes of type (1) or (2).
\end{prop}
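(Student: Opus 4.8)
The plan is to derive everything from the naturality of Balmer's comparison maps applied to the tensor triangulated functor $\RE_B$. First I would record that $\RE_B$ carries the Tate sphere $S^\alpha = \AA^1\smallsetminus 0$ to the based space of its complex points $\CC^\times\simeq S^1$, the topological circle (cf.\ \cite[\S4.4]{ho:gal}). So, taking $S^1$ as the distinguished invertible object of $\SH^{\fin}$, the graded endomorphism ring of its unit is $[\SS,(S^1)^{\wedge *}] = \pi^s_{-*}$, and $\RE_B$ induces a graded ring homomorphism $\psi\colon \kmw*F\to \pi^s_{-*}$. By naturality of $\rho^\bullet$ \cite[\S5]{balmer:sss} I then obtain a commutative square
\[
\xymatrix{
\Spc(\SH^{\fin})\ar[r]^{\Spc(\RE_B)}\ar[d]_{\rho^\bullet} &\Spc(\SHA(F)^c)\ar[d]^{\rho^\bullet}\\
\Spec^h(\pi^s_{-*})\ar[r]^{\psi^{-1}} &\Spec^h(\kmw*F),
}
\]
which identifies the composite $\rho^\bullet\circ\Spc(\RE_B)$ with $\pp\mapsto \psi^{-1}(\rho^\bullet(\pp))$. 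This reduces the problem to two tasks: computing $\rho^\bullet$ on $\Spc(\SH^{\fin})$, and computing $\psi$ together with the relevant preimages.

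For the first task I would invoke Nishida's theorem that every positive-degree element of $\pi^s_*$ is nilpotent. This forces the nilradical of $\pi^s_{-*}$ to be $\mathfrak n := \bigoplus_{i>0}\pi^s_i$ and makes $(~)^0\colon \Spec^h(\pi^s_{-*})\to \Spec(\ZZ)$ a bijection, so that $\rho^\bullet$ for $\SH^{\fin}$ is determined by its ungraded shadow $\rho$. Since $\cone(m) = \SS/m$ is $K(p,n-1)$-acyclic (for $2\le n\le\infty$) precisely when $p\mid m$, and $H\QQ$-acyclic precisely when $m\ne 0$, the classification of $\Spc(\SH^{\fin})$ in \aref{thm:top} gives $\rho(\Cc_{0,1}) = (0)$ and $\rho(\Cc_{p,n}) = (p)$; equivalently $\rho^\bullet(\Cc_{0,1}) = \mathfrak n$ and $\rho^\bullet(\Cc_{p,n}) = \mathfrak n + (p)$ for all $p$ and $2\le n\le\infty$.

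For the second task the key observation is that in every nonzero degree $i$ the target group $R^i_{\SH^{\fin}} = \pi^s_{-i}$ is either zero (when $i>0$) or torsion (when $i<0$), hence contained in $\mathfrak n$; consequently both $\psi^{-1}(\mathfrak n)$ and $\psi^{-1}(\mathfrak n+(p))$ contain all of $\kmw iF$ for $i\ne 0$ and are pinned down by their degree-zero parts. In degree zero, $[u]\in\kmw1F$ maps into $\pi^s_{-1} = 0$, so $\psi_0\langle u\rangle = \psi_0(1+\eta[u]) = 1$; since the forms $\langle u\rangle$ generate $GW(F)$ as an abelian group, this gives $\psi_0 = \dim$. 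Thus $\psi^{-1}(\mathfrak n)$ has degree-zero part $\ker\dim$ and $\psi^{-1}(\mathfrak n+(p))$ has degree-zero part $\dim^{-1}(p\ZZ)$. On the other hand $\kmw*F/([F^\times],\eta)\cong K^M_0(F) = \ZZ$ is concentrated in degree zero, and its degree-zero part contains every $\eta[u] = \langle u\rangle - 1$, hence all of $\ker\dim$, and since the quotient is $\ZZ$ it equals $\ker\dim$; so $([F^\times],\eta)$ agrees with $\psi^{-1}(\mathfrak n)$ in every degree, and likewise $([F^\times],\eta,p) = \psi^{-1}(\mathfrak n+(p))$. Feeding this back into the square yields $\rho^\bullet\circ\Spc(\RE_B)(\Cc_{0,1}) = ([F^\times],\eta)$ and $\rho^\bullet\circ\Spc(\RE_B)(\Cc_{p,n}) = ([F^\times],\eta,p)$ for $n\ge 2$, so the image is $\{([F^\times],\eta)\}\cup\{([F^\times],\eta,p)\mid p\text{ prime}\}$. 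By \aref{thm:thornton} these are exactly the primes of type (1) and (2); and since $V_\eta$ consists of the primes of types (1), (2), and (5), this image equals $V_\eta\smallsetminus\{([P_\alpha],2,\eta)\mid\alpha\in X_F\}$.

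The main obstacle, such as it is, is the bookkeeping in the second task: identifying $\psi_0$ with $\dim$ and then recognizing $\psi^{-1}(\mathfrak n)$ and $\psi^{-1}(\mathfrak n+(p))$ as the Thornton primes $([F^\times],\eta)$ and $([F^\times],\eta,p)$. Everything else — the existence and naturality of the comparison square, the realization $\RE_B(S^\alpha)\simeq S^1$, and the structure of $\Spc(\SH^{\fin})$ — is either formal or already available in the literature.
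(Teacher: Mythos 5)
Your proof is correct and takes essentially the same approach as the paper's: apply naturality of Balmer's comparison map to $\RE_B$ (using $\RE_B(S^\alpha)\simeq S^1$), then compute the bottom horizontal map $\Spec^h(\pi^*(S^0))\to\Spec^h(\kmw*F)$. The paper writes this down and then declares ``the rest of the computation is easy''; what you have done is fill in that ``easy'' step — namely Nishida's theorem pinning down $\Spec^h(\pi^*(S^0))\cong\Spec(\ZZ)$, the identification $\psi_0=\dim$ on $GW(F)$, and the check that $\psi^{-1}(\mathfrak n)$ and $\psi^{-1}(\mathfrak n+(p))$ are exactly the Thornton primes $([F^\times],\eta)$ and $([F^\times],\eta,p)$. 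So this is the same route, just with the implicit bookkeeping made explicit.
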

\begin{proof}
Let $\pi^*(S^0)$ denote the cohomotopy groups of the sphere so that $\pi^n(S^0) = [S^0,S^n] \cong \pi_{-n}(S^0)$.  Betti realization induces a commutative diagram
\[
\xymatrix{
  \Spc(\SH^{\fin})\ar[r]^{\Spc(\RE_B)}\ar[d]_{\rho^\bullet_{S^1}} &\Spc(\SHA(F)^c)\ar[d]^{\rho^\bullet_{S^\alpha}}\\
  \Spec^h(\pi^*(S^0))\ar[r] &\Spec^h(\kmw*F)
}\]
wherein the bottom horizontal arrow takes $\pi^{>0}(S^0)$ to $([F^\times],\eta)$ and takes $(p,\pi^{>0}(S^0))$ to $([F^\times],p,\eta)$.  The rest of the computation is easy.
\end{proof}

For completeness, we include the following slight enhancement of \cite[Proposition 10.4]{balmer:sss}.

\begin{corollary}\label{cor:monicC}
When $F=\CC$, the map $\Spc(\RE_B)$ is monic with image contained in $\supp(\PP^2)$.
\end{corollary}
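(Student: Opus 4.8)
The plan is to prove the two assertions of \aref{cor:monicC} separately, leveraging the structure of $\SH^{\fin}$ recalled in \aref{thm:top} and the compatibility of Betti realization with the comparison maps $\rho^\bullet$ established in the proof of \aref{prop:topmot}. For injectivity, the key point is that over $\CC$ the field $F$ is nonreal with no orderings, so Thornton's classification (\aref{thm:thornton}) tells us that $\Spec^h(\kmw*\CC)$ consists precisely of primes of type (1), (2), and (3). On the other hand, the formula in \aref{prop:topmot} shows that $\rho^\bullet\circ\Spc(\RE_B)$ sends $\Cc_{0,1}\mapsto([F^\times],\eta)$ and $\Cc_{p,n}\mapsto([F^\times],p,\eta)$ for $n\ge 2$. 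These target values are all distinct as $(p,n)$ ranges over $\{(0,1)\}\cup\{(p,n):n\ge 2\}$ --- which is exactly the indexing set for the distinct primes of $\Spc(\SH^{\fin})$ by \aref{thm:top}. Since $\rho^\bullet\circ\Spc(\RE_B)$ is injective, so is $\Spc(\RE_B)$.

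For the statement about the image, I would recall that $\PP^2$ realizes (up to suspension) to the cofiber $C\eta$ of the Hopf map $\eta:S^3\to S^2$ in $\SH^{\fin}$; indeed $\Sigma\PP^2\simeq C\eta$ motivically (as used in the proof of \aref{prop:nonreal}), and complex Betti realization carries the motivic Hopf map to the classical one. Therefore $\RE_B(\PP^2)\simeq \Sigma^{-1}C\eta_{\mathrm{top}}$, a finite spectrum. The image of $\Spc(\RE_B)$ being contained in $\supp(\PP^2)\subseteq\Spc(\SHA(\CC)^c)$ is equivalent, by definition of $\supp$ and of the induced map on spectra ($\Spc(\RE_B)(\Pp)=\RE_B^{-1}(\Pp)$), to the assertion that $\PP^2\notin\RE_B^{-1}(\Pp)$ for every $\Pp\in\Spc(\SH^{\fin})$, i.e.\ that $\RE_B(\PP^2)\notin\Pp$ for all $\Pp$, i.e.\ that $\RE_B(\PP^2)$ has full support, equivalently that $C\eta_{\mathrm{top}}$ is not $\otimes$-nilpotent in $\SH^{\fin}$. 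But this is classical: $C\eta$ is detected by all Morava $K$-theories (one checks $K(p,n-1)_*(C\eta)\ne 0$ for all primes $p$ and all $n\ge 1$, since $\eta$ acts nilpotently --- in fact trivially for $n\ge 2$ --- on any Morava $K$-theory), so $C\eta\notin\Cc_{p,n}$ for every $p$ and $n$, whence $\supp(C\eta)=\Spc(\SH^{\fin})$.

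Assembling these: $\Spc(\RE_B)$ is injective, and its image lies inside $\supp(\PP^2)$. I expect the injectivity half to be essentially immediate once \aref{prop:topmot} is in hand --- it is just matching up two explicit parametrizations. The only mild subtlety, and the step I would flag as requiring a sentence of care, is the support computation for $C\eta_{\mathrm{top}}$: one must verify $K(p,n-1)_*(C\eta)\ne 0$ for \emph{all} $(p,n)$ including $n=1$ (where $K(p,0)=H\QQ$ and $H\QQ_*(C\eta)$ is $\QQ$ in two degrees) and $n=\infty$ (where $\Cc_{p,\infty}$ is the $p$-locally trivial spectra, and $C\eta$ is certainly not $p$-locally trivial). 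None of this is hard, but it is the place where one actually invokes the nilpotence theorem rather than formal tensor-triangular bookkeeping; everything else follows from results already recorded in the excerpt.
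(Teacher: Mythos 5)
There is a genuine error in your injectivity argument. You assert that the composite $\rho^\bullet \circ \Spc(\RE_B)$ is injective because ``these target values are all distinct as $(p,n)$ ranges,'' but \aref{prop:topmot} records the opposite: $\Cc_{p,n} \mapsto ([F^\times],p,\eta)$ for \emph{every} $n \geq 2$ (including $n=\infty$), so the entire chromatic tower at a fixed prime $p$ --- the pairwise-distinct primes $\Cc_{p,2}, \Cc_{p,3}, \dots, \Cc_{p,\infty}$ of \aref{thm:top} --- collapses to the single homogeneous prime $([F^\times],p,\eta)$. The composite $\rho^\bullet \circ \Spc(\RE_B)$ is therefore very far from injective, and injectivity of $\Spc(\RE_B)$ cannot be read off from it. Some input beyond the comparison map $\rho^\bullet$ is unavoidable here, precisely because $\rho^\bullet$ cannot see the vertical chromatic structure. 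The paper supplies it via the constant presheaf functor $c^*:\SH^{\fin}\to\SHA(\CC)^c$, which satisfies $\RE_B\circ c^* = \id$; contravariance of $\Spc$ then gives $\Spc(c^*)\circ\Spc(\RE_B)=\id$, so $\Spc(\RE_B)$ is a split monomorphism.

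Your argument for the containment $\im(\Spc(\RE_B))\subseteq\supp(\PP^2)$ is correct and is a valid alternative to the paper's. You reduce, via the definitions of $\Spc(\RE_B)$ and $\supp$, to showing that $\RE_B(\PP^2)\simeq\Sigma^{-1}C\eta_{\mathrm{top}}$ has full support in $\Spc(\SH^{\fin})$, and then verify this prime-by-prime by computing Morava $K$-homology (including the $n=1$ case with $H\QQ$ and the $n=\infty$ case). The paper reaches the same conclusion more quickly by reusing \aref{prop:topmot}: since $X_\CC=\varnothing$, the image of $\rho^\bullet\circ\Spc(\RE_B)$ is exactly $V_\eta$, and by the very definition of $\rho^\bullet$ one has $(\rho^\bullet)^{-1}(V_\eta)=\supp(\cone(\eta))=\supp(C\eta)=\supp(\PP^2)$. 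The mathematical content of the two arguments is essentially the same --- both ultimately rest on $C\eta_{\mathrm{top}}$ having full support --- but the paper's phrasing avoids re-entering $\SH^{\fin}$ explicitly.
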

\begin{proof}
Let $c^*:\SH^{\fin}\to \SHA(F)^{c}$ be the functor induced by the constant presheaf functor. The map $\Spc(\RE_B)$ is monic because $\RE_B\circ c^* = \id$.  Since $X_F=\varnothing$, the image of the map $\rho^\bullet\circ \Spc(\RE_B)$ is exactly $V_\eta$, and $(\rho^\bullet)^{-1}(V_\eta) = \supp(C\eta) = \supp(\PP^2)$.
\end{proof}

We now turn to the study of $\Spc(\RE_B^{C_2})$ when $F$ has a real embedding.

\begin{prop}\label{prop:eqmot}
Suppose $F$ has an embedding into $\RR$ compatible with $\alpha\in X_F$ and let $\RE_{B,\alpha}^{C_2}$ denote the corresponding $C_2$-equivariant Betti realization functor.  Then the composite map 
$\rho^\bullet\circ \Spc(\RE_{B,\alpha}^{C_2}):\Spc(\SH(C_2)^c)\to \Spec^h(\kmw*F)$ takes the following values:
\[
\begin{aligned}
  \Pp(e,0,1)&\longmapsto ([F^\times],\eta)\text{,}\\
  \Pp(C_2,0,1)&\longmapsto ([P_\alpha],h)\text{,}\\
  \Pp(e,p,n)&\longmapsto ([F^\times],p,\eta)\text{ for all $p$ and $n\ge 2$}\text{,}\\
  \Pp(C_2,p,n)&\longmapsto ([P_\alpha],p,h)\text{ for $p\ne 2$ and $n\ge 2$, and}\\
  \Pp(C_2,2,n)&\longmapsto ([P_\alpha],2,\eta)\text{ for $n\ge 2$.}
\end{aligned}
\]
\end{prop}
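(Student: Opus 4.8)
The strategy is to reduce the computation of $\rho^\bullet\circ \Spc(\RE^{C_2}_{B,\alpha})$ on each prime $\Pp(H,p,n)$ of $\Spc(\SH(C_2)^c)$ to an understanding of the composite with the degree-zero comparison map $(~)^0$, together with a single ``geometric'' piece of data that detects $\eta$ versus $h$. First I would set up the commutative square
\[
\xymatrix{
  \Spc(\SH(C_2)^c)\ar[r]^{\Spc(\RE^{C_2}_{B,\alpha})}\ar[d]_{\rho^\bullet} &\Spc(\SHA(F)^c)\ar[d]^{\rho^\bullet_{S^\alpha}}\\
  \Spec^h(R^*_{C_2})\ar[r] &\Spec^h(\kmw*F)
}
\]
where $R^*_{C_2}$ is the bigraded endomorphism ring of the equivariant sphere computed with respect to the invertible object $\RE^{C_2}_{B,\alpha}$ carries $S^\alpha$ to, namely the sign-representation sphere $S^\sigma$ (this is the content of \cite[\S4.4]{ho:gal}: real Betti realization sends $\AA^1\smallsetminus 0$ to $S^\sigma$). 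So the relevant graded ring on the left is $\pi^{\star}_{C_2}(S^0)$ graded by multiples of $\sigma$, i.e.\ $[S^0, S^{m\sigma}]^{C_2}$, $m\in\ZZ$, and the ring map $\pi^{*\sigma}_{C_2}(S^0)\to \kmw*F$ is the one induced on $\pi_0$ by $\RE^{C_2}_{B,\alpha}$, which by construction factors through (and, on $\pi_0$, recovers) Morel's identification $\pi_0(S^0)_*\cong \kmw*F$. Two classes control everything: the Euler class $a_\sigma\in[S^0,S^\sigma]^{C_2}=\pi_{-\sigma}^{C_2}(S^0)$ maps to $[-1]\in K^{MW}_1(F)$, and the transfer/Hopf class $\eta_\sigma$ (or rather the relation between $a_\sigma$ and the image of $\eta$) — more precisely, the equivariant sphere's $\pi_0$ in weight $*$ is a polynomial-ish ring on $a_\sigma$ over a Burnside-ring base, and one checks $\rho^\bullet_{S^\sigma}$ of the two families of primes in $\Spc(\SH(C_2)^c)$ already separates out ``$a_\sigma$-torsion-inverted'' (geometric fixed points land in $\SH$, so $a_\sigma$ becomes invertible, hence $[-1]$ invertible, hence $h=2+[-1]\eta$ behaves like $\eta$ up to a unit) from ``$a_\sigma$-divisible'' (underlying/naive part, where $a_\sigma\mapsto 0$ so $[-1]\mapsto 0$ and $h\mapsto 2$).

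Next I would carry out the case analysis, treating the $H=e$ and $H=C_2$ families separately, in each case comparing with the purely topological computation \aref{prop:topmot} applied through geometric fixed points. For $H=e$: the prime $\Pp(e,p,n)=(\Phi^e)^{-1}(\Cc_{p,n})=(\res^{C_2}_e)^{-1}(\Cc_{p,n})$, so $\Spc(\RE^{C_2}_{B,\alpha})(\Pp(e,p,n))$ lies over the restriction-to-the-trivial-subgroup story, which is exactly ordinary Betti realization; hence \aref{prop:topmot} gives $\Pp(e,0,1)\mapsto([F^\times],\eta)$ and $\Pp(e,p,n)\mapsto([F^\times],p,\eta)$ for $n\ge 2$. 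For $H=C_2$: the geometric fixed point functor $\Phi^{C_2}:\SH(C_2)\to\SH$ inverts $a_\sigma$, and real Betti realization intertwines $\Phi^{C_2}$ with the \emph{real realization} $\SHA(F)\to\SH$ that inverts $[-1]$ (again \cite[\S4.4]{ho:gal}); since $[-1]^{-1}\kmw*F$ was identified in Step~3 of the proof of \aref{thm:main} (for $F$ real closed; in general one base-changes along $j:F\hookrightarrow F_\alpha$ as in Step~1), we see the image of $\Pp(C_2,p,n)$ sits over $V_{[-1]}^c$, i.e.\ over a type-(4),(5), or (6) prime. Matching the residual $\pi_0$ data — $H\QQ$-acyclics $\leftrightarrow$ $([P_\alpha],h)$; $K(p,n-1)$-acyclics for odd $p$, $n\ge 2$ $\leftrightarrow$ $([P_\alpha],h,p)$; $K(2,n-1)$-acyclics, $n\ge 2$ $\leftrightarrow$ $([P_\alpha],\eta,2)$ (here the mod-$2$ collapse $h\mapsto$ unit$\cdot\eta$ forces the $\eta$ in place of $h$, consistent with $L([P_\alpha],\eta,2)\cong\FF_2[[-1]^{\pm 1}]$ from \aref{prop:res}) — finishes the table. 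The degree-zero shadow is pinned down by Balmer's $\rho$ being determined by $\Spec(\dim):\Spec(\ZZ)\to\Spec(GW(F_\alpha))$ composed with $\Spec(j^*)$, exactly as in the proof of \aref{prop:nonreal}.

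\textbf{Main obstacle.}
The delicate point is the $C_2$-family at the prime $2$: showing $\Pp(C_2,2,n)\mapsto([P_\alpha],2,\eta)$ rather than $([P_\alpha],2,h)$. Morally this is because after inverting $[-1]$ and reducing mod $2$ one has $h=2+[-1]\eta\equiv [-1]\eta$, a unit times $\eta$, so ``$\eta\in\pp$'' and ``$h\in\pp$'' coincide and the canonical representative in Thornton's list (\aref{thm:thornton}) is the one written with $\eta$; concretely $([P_\alpha],\eta,2)=([P_\alpha],h,2)$ as ideals but the normal form uses $\eta$, and one must check $\rho^\bullet$ really lands in that ideal and not, say, in the ideal $([P_\alpha],h)$ of type (4) (which would be wrong, since $2$ must be in the image ideal: the underlying spectrum of $\RE^{C_2}_{B,\alpha}$ of a $K(2,n-1)$-acyclic is $2$-locally trivial). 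The cleanest way to nail this is to chase the specific classes $[-1]$ and $2$ through $\rho^\bullet_{S^\sigma}$ using $\cone$: one verifies $\cone([-1])$ and $\cone(2)$ are/aren't killed by the relevant equivariant Morava-$K$-theory (pulled back along $\Phi^{C_2}$), which reduces to the known nilpotence/periodicity behavior in $\SH^{\fin}$ via \aref{thm:top}, and then transports the answer across $\RE^{C_2}_{B,\alpha}$ using the square above. I expect this mod-$2$ bookkeeping — and correctly identifying which invertible object ($S^\sigma$) makes the square commute — to be where all the care is needed; everything else is an application of \aref{thm:bs}, \aref{prop:topmot}, and the base-change trick from Step~1 of \aref{thm:main}.
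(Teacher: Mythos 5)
Your proposal is correct and, at its core, relies on the same observation the paper uses: that $\Phi^{C_2}\circ\RE_{B,\alpha}^{C_2}$ inverts $[-1]$ (equivalently, kills $C[-1]$, or equivalently inverts the Euler class $a_\sigma$), so that the $H=C_2$ family is pushed into $D_{[-1]}$. But the organization is genuinely different, and it is worth contrasting the two. The paper computes $g = \rho^\bullet\circ\Spc(\RE_{B,\alpha}^{C_2})$ directly only on the two minimal primes $\Pp(e,0,1)$ and $\Pp(C_2,0,1)$, then propagates upward using continuity together with the explicit Hasse diagrams of \aref{thm:thornton} and \aref{thm:bs}; the continuity constraint leaves a small ambiguity at $\Pp(C_2,2,n)$ which is resolved exactly by the two computations you also isolate, namely $\Phi^{C_2}\RE_{B,\alpha}^{C_2}C[-1]\simeq\ast$ and $\Phi^{C_2}\RE_{B,\alpha}^{C_2}(C\eta)\wedge K(2,n-1)\not\simeq\ast$. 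You instead compute $g$ on every prime directly by a $\pi_0$-level class chase: reduce $H=e$ to \aref{prop:topmot} via $\Phi^e\circ\RE_{B,\alpha}^{C_2}=\RE_B$, and reduce $H=C_2$ to the fact that $\Phi^{C_2}\circ\RE_{B,\alpha}^{C_2}$ factors through $[-1]$-inversion, under which $[-1]\mapsto\pm 1$, $\eta\mapsto\mp 2$, $h\mapsto 0$, $[P_\alpha]\mapsto 0$, so each $\rho^\bullet$-image is read off class by class. Your route is more self-contained (it does not need the closure structure of the $C_2$ spectrum, only the list of primes) but requires you to justify explicitly what the induced ring map $\kmw*F\to\pi_{C_2}^{*\sigma}(S^0)$ does; the paper's route outsources more to the geometry of the two Hasse diagrams. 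Two small slips in your writeup: the induced ring map runs $\kmw*F\to\pi_{C_2}^{*\sigma}(S^0)$ (so $[-1]\mapsto a_\sigma$, not $a_\sigma\mapsto[-1]$), since $\RE_{B,\alpha}^{C_2}$ goes out of the motivic category; and the appeal to ``Step 3 of \aref{thm:main}'' for $[-1]^{-1}\kmw*F$ is stated there only for $F$ real closed, so you should first base-change along $j:F\hookrightarrow F_\alpha$ (which you do gesture at) before invoking it.
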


\begin{remark}
 If $F$ admits an embedding into $\R$ then for each $\alpha\in X_F$ there is 
 an embedding compatible with the ordering $\alpha$. The proposition implies that the union of the images of all of the resulting 
 $\rho^\bullet\circ \Spc(\RE_{B,\alpha}^{C_2})$ is 
\[
  \Spec^h(\kmw*F)\smallsetminus \{([F^\times],2)\}\text{.}
\]
\end{remark}

\begin{proof}
Let $g = \rho^\bullet\circ \Spc(\RE_{B,\alpha}^{C_2})$.  It is easy to check that $g(\Pp(e,0,1)) = ([F^\times],\eta)$ and $g(C_2,0,1) = ([P_\alpha],h)$.  Since $g$ is continuous, we know that $g(\overline{\Pp(e,0,1)})\subseteq \overline{([F^\times],\eta)}$, where $\overline{(~)}$ denotes the closure operator.  Observing the form of $\Spc(\SH(C_2)^c)$ in \aref{fig:c2} and of $\Spec^h(\kmw*F)$ in \aref{fig:kmw}, we see then that $g(\Pp(e,p,n)) = ([F^\times],p,\eta)$ for all $p$ and $n\ge 2$.

We now check that $g(\Pp(C_2,2,n)) = ([P_\alpha],2,\eta)$ for $n\ge 2$.  Since $g(\Pp(C_2,0,1)) = ([P_\alpha],h)$, the closure continuity condition tells us that $g(\Pp(C_2,2,n))$ is either $([F^\times],2,\eta)$ or $([P_\alpha],2,\eta)$.  Observe, though, that $\Phi^{C_2}\RE_{B,\alpha}^{C_2}C[-1]\simeq *$.  (Simply apply $\Phi^{C_2}\RE_{B,\alpha}^{C_2}$ to the defining cofiber sequence for $C[-1]$.)  Thus 
\begin{align*}
  [-1]\notin \rho^\bullet\bigl(\Spc(\RE_{B,\alpha}^{C_2})&(\Pp(C_2,2,n))\bigr) 
  \\
  &= \{f\in \kmw*F\mid (\Phi^{C_2}\RE_{B,\alpha}^{C_2}Cf)_{(2)}\wedge K(2,n-1)\not\simeq *\}.
\end{align*}
Also $g(\Pp(C_2,2,n)) \neq ([P_\alpha],h)$ since, \emph{e.g.},  $\Phi^{C_2}(C\eta)\wedge K(2,n-1)\not\simeq \ast$ so $\eta\in g(\Pp(C_2,2,n))$.  We conclude that 
 $g(\Pp(C_2,2,n)) = ([P_\alpha],2,\eta)$.

A similar argument shows that $g(C_2,p,n) = ([P_\alpha],p,h)$ for $p$ odd and $n\ge 2$.
\end{proof}

\begin{remark}\label{rmk:suppCeta}
By \aref{thm:main} and the fact that $C\eta\simeq \Sigma\PP^2$, we know that $([F^\times],2)$ belongs to $\rho^\bullet(\supp(\PP^2))$.  This and \aref{prop:eqmot} imply that $\Spc(\RE_B^{C_2})$ does not hit all of $\supp(\PP^2)$, indicating that the tensor triangular spectrum $\Spc(\SHA(\RR)^c)$ is
richer than $\Spc(\SH(C_2)^c)$.  Compare with \aref{cor:monicC}.  The moral seems to be that significant new ideas are necessary to unravel the structure of $\supp(\PP^2)\subseteq \Spc(\SHA(F)^c)$.
\end{remark}

\begin{corollary}\label{cor:monicR}
When $F\subseteq \RR$, the map $\Spc(\RE_B^{C_2})$ is monic with image contained in
\[
  (\rho^\bullet)^{-1}\left(\Spec^h(\kmw*F)\smallsetminus \{([F^\times],2)\}\right)\subsetneq \Spc(\SHA(F)^c).
\]
\end{corollary}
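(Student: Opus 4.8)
The plan is to follow the template of \aref{cor:monicC}, replacing the complex constant-presheaf functor with its Galois-equivariant analogue and then reading the image bound off \aref{prop:eqmot}. For monicity, I would begin by recording that a real embedding of $F$ forces $\sqrt{-1}\notin F$, so that $F(\sqrt{-1})/F$ is a $C_2$-Galois extension. Using $\Spec F(\sqrt{-1})$ as a ``free $C_2$-orbit'' and $\Spec F$ as the terminal orbit one obtains a symmetric monoidal exact functor $c^*\colon \SH(C_2)^c\to \SHA(F)^c$ --- the Galois-equivariant counterpart of the constant-presheaf functor appearing in \aref{cor:monicC} --- described in \cite{ho:gal}; it lands in $\SHA(F)^c$ because $\Spec F(\sqrt{-1})$ is a smooth $F$-scheme, hence has compact suspension spectrum. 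The point to verify is that $\RE_{B,\alpha}^{C_2}\circ c^*\simeq\id_{\SH(C_2)^c}$: realizing $\Spec F(\sqrt{-1})$ along the chosen real embedding (extended to $F(\sqrt{-1})\hookrightarrow\CC$) yields the free $C_2$-set of order $2$, while $\Spec F$ realizes to a point, so $c^*$ is a section of $\RE_{B,\alpha}^{C_2}$, the natural isomorphism being pinned down on the generating objects. Applying $\Spc$ gives $\Spc(c^*)\circ\Spc(\RE_{B,\alpha}^{C_2}) = \id$, so $\Spc(\RE_B^{C_2})$ is split monic.

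For the image bound, I would invoke \aref{thm:bs} together with \aref{prop:eqmot}: since every tensor triangular prime of $\SH(C_2)^c$ is of the form $\Pp(H,p,n)$, \aref{prop:eqmot} computes $\rho^\bullet\circ\Spc(\RE_{B,\alpha}^{C_2})$ on all of $\Spc(\SH(C_2)^c)$, and each of its five listed output values is a prime of type (1), (2), (4), (5), or (6) in \aref{thm:thornton} --- in particular never the type-(3) prime $([F^\times],2)$. Hence the image of $\Spc(\RE_B^{C_2})$ lies inside $(\rho^\bullet)^{-1}\bigl(\Spec^h(\kmw*F)\smallsetminus\{([F^\times],2)\}\bigr)$. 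That this preimage is a \emph{proper} subset of $\Spc(\SHA(F)^c)$ is then immediate from \aref{thm:main}: as recorded in \aref{rmk:suppCeta}, the prime $([F^\times],2)$ does lie in the image of $\rho^\bullet$, so $(\rho^\bullet)^{-1}\bigl(\{([F^\times],2)\}\bigr)\ne\varnothing$.

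I expect the only non-routine ingredient to be the construction of the section $c^*$ over a general subfield $F\subseteq\RR$ (rather than over $\RR$ itself), but this is essentially contained in \cite{ho:gal}, and the required identity $\RE_{B,\alpha}^{C_2}\circ c^*\simeq\id$ reduces to the behaviour of realization on the generating smooth $F$-schemes $\Spec F$ and $\Spec F(\sqrt{-1})$. Everything else follows directly from \aref{prop:eqmot}, \aref{thm:bs}, and \aref{thm:main}.
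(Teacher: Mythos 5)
Your proposal is correct and follows essentially the same route as the paper: monicity via the section $c^*_{F[i]/F}$ from \cite[Theorem 4.6]{ho:gal} satisfying $\RE_B^{C_2}\circ c^*_{F[i]/F} = \id$, and the image bound read directly off \aref{prop:eqmot}. The paper states this in two sentences without unpacking the construction of $c^*$ or explicitly noting why the containment is proper; your additional details (the generating-objects check and the appeal to \aref{thm:main} for properness) are consistent with and fill in what the paper leaves implicit.
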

\begin{proof}
Let $c_{F[i]/F}^*$ denote the functor of \cite[Theorem 4.6]{ho:gal}.   The map $\Spc(\RE_B^{C_2})$ is monic because 
$\RE_B^{C_2}\circ c_{F[i]/F}^* = \id$.  The restriction on the image of $\Spc(\RE_B^{C_2})$ follows immediately from \aref{prop:eqmot}.
\end{proof}

\begin{remark}\label{rmk:joa}
This answers a question posed in \cite[Remark 7.2.11]{joa}.  In particular, $\Spc(\RE_B^{C_2})$ is never surjective.
\end{remark}

We conclude this section by providing a sketch of known structures in $\Spc(\SHA(F)^c)$ when $F$ is a subfield of $\RR$ with precisely one ordering.  (For example, $F$ could be $\RR$ itself.)  See \aref{fig:sketch} below.  These structures include the images of the injective maps $\Spc(\RE_B)$ and $\Spc(\RE_B^{C_2})$ along with a mysterious (but nonempty) subspace $(\rho^\bullet)^{-1}([F^\times],2)$.  We have also drawn the map $\rho^\bullet$ with target $\Spec^h(\kmw*F)$ so that known parts of fibers are aligned vertically over their images.  Note that we have drawn $\Spec^h(\kmw*F)$ in a fashion slightly different from that in \aref{fig:kmw} so as to highlight similarities with $\Spc(\SH(C_2)^c)$.

The interested reader can easily elaborate upon \aref{fig:sketch} in the case where $F$ is a subfield of $\RR$ with more than one ordering.  The result would be an ``$X_F$ fan'' over primes in $\kmw*F$ of types (4), (5), and (6).  To see what is known for $F$ a nonreal subfield of $\CC$, simply delete everything of these types.

\begin{remark}
Since the first version of this paper was written, work of Bachmann \cite{bman} appeared which makes it possible to completely specify $U(\cone([-1]))$, the portion of the spectrum $\Spc(\SHA(F)^c)$ living over primes in $\kmw*F$ of types (4), (5), and (6).  Bachmann proves that $\SHA(F)[[-1]^{-1}]$ is equivalent to $\SH(X_F)$, the homotopy category of sheaves of spectra on $X_F$.  It follows $U(\cone([-1]))$ is homeomorphic to $\Spc(\SH(X_F))$.  Thus $U(\cone([-1]))$ is the product of $X_F$ with the topological space specified by the towers on the left-hand side of \aref{fig:sketch}.
\end{remark}

\begin{figure}
\begin{center}
\includegraphics[width=5.5in]{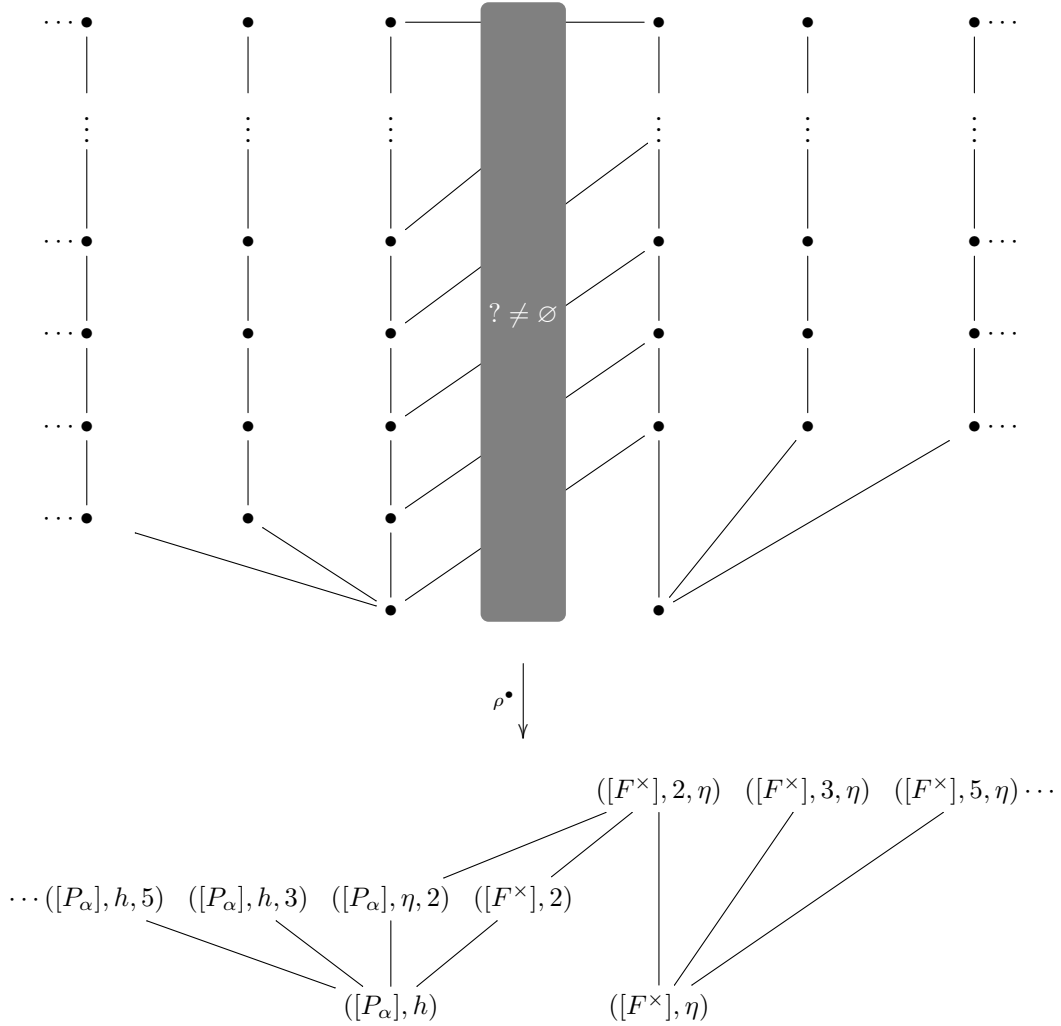}
\end{center}
%\[
%\xymatrixcolsep{.2pc}\xymatrix{
%\cdots\bullet\phantom{\cdots}\ar@{-}[d] &\bullet\ar@{-}[d] &\bullet\ar@{-}[d]\ar@{-}[rr] &?\ne\varnothing &\bullet\ar@{-}[d] &\bullet\ar@{-}[d] &{\phantom{\cdots}}\bullet\cdots\ar@{-}[d]\\
%\vdots\ar@{-}[d] &\vdots\ar@{-}[d] &\vdots\ar@{-}[d] & &\vdots\ar@{-}[d] &\vdots\ar@{-}[d] &\vdots\ar@{-}[d]\\
%\cdots\bullet{\phantom{\cdots}}\ar@{-}[d] &\bullet\ar@{-}[d] &\bullet\ar@{-}[d]\ar@{-}[ur] & &\bullet\ar@{-}[d] &\bullet\ar@{-}[d] &{\phantom{\cdots}}\bullet\cdots\ar@{-}[d]\\
%\cdots\bullet{\phantom{\cdots}}\ar@{-}[d] &\bullet\ar@{-}[d] &\bullet\ar@{-}[d]\ar@{-}[uurr] & &\bullet\ar@{-}[d] &\bullet\ar@{-}[d] &{\phantom{\cdots}}\bullet\cdots\ar@{-}[d]\\
%\cdots\bullet{\phantom{\cdots}}\ar@{-}[d] &\bullet\ar@{-}[d] &\bullet\ar@{-}[d]\ar@{-}[uurr] & &\bullet\ar@{-}[dd] &\bullet\ar@{-}[ddl] &{\phantom{\cdots}}\bullet\cdots\ar@{-}[ddll]\\
%\cdots\bullet{\phantom{\cdots}}\ar@{-}[drr] &\bullet\ar@{-}[dr] &\bullet\ar@{-}[d]\ar@{-}[uurr] & & & &\\
%&&\bullet\ar@{-}[uurr] &\ar@{}[dd]^(.25){}="a"^(.75){}="b" \ar "a";"b"_{\rho^\bullet} &\bullet &&\\
%\save "1,4"."7,4"*[gray]\frm<3pt>{*}*[white]{\txt{\\ \\ \\ \\ \\ \\ \\ \\ \\ \\ \\ \\ \\ \\ \\ \\ \\ \\ \\ \\ \\ \\ $? \ne \varnothing$}} \restore
%\\
%&&&&([F^\times],2,\eta)\ar@{-}[dd] &([F^\times],3,\eta)\ar@{-}[ddl] &([F^\times],5,\eta)\cdots\ar@{-}[ddll]\\
%\cdots([P_\alpha],h,5)\ar@{-}[drr] &([P_\alpha],h,3)\ar@{-}[dr] &([P_\alpha],\eta,2)\ar@{-}[d]\ar@{-}[urr] &([F^\times],2)\ar@{-}[dl]\ar@{-}[ur]\\
%&&([P_\alpha],h)&&([F^\times],\eta)
%}\]
\caption{Known structures in $\Spc(\SHA(F)^c)$ for $F$ a subfield of $\RR$ with one ordering $\alpha$, arranged fiberwise over $\Spec^h(\kmw*F)$ with respect to $\rho^\bullet$.}\label{fig:sketch}
\end{figure}

\section{Field spectra for 
\texorpdfstring{$\SHA(F)^c$}{SH(F)}}\label{sec:fields}

In \aref{sec:explicit}, we constructed fields in the homotopy category of cellular spectra.  Acyclics for these fields gave explicit tensor triangular primes in $\Cc(F)$.  In this section we further explore motivic fields. We begin by showing that fields from the topological stable homotopy category can be imported into $\SHA(F)$, when $F$ admits a complex embedding.

If $F$ is a subfield of $\CC$ then we have an adjoint pair 
$\RE_{B}:\SHA(F) \rightleftarrows \SH : \Sing_{B}$, where $\RE_B$ extends the functor sending a smooth complex variety $X$ to the space $X(\CC)$. Similarly, if $F$ is a subfield of $\RR$, we have an adjoint pair 
$\RE_{B}^{C_2}:\SHA(F) \rightleftarrows \SH(C_2) : \Sing_{B}^{C_2}$. 
In order to treat both of these cases simultaneously, we write 
$$ 
\RE:\SHA(F) \rightleftarrows \SH(G):\Sing
$$
for either of these pairs where $G$ is either $\{e\}$ or $C_2$. 

\begin{lemma}\label{lem:Scop}
 The functor $\Sing:\SH(G)\to \SHA(F)$ commutes with all coproducts.
\end{lemma}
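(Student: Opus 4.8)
The plan is to exploit the fact that $\Sing$ is a right adjoint, but that does not immediately give commutation with coproducts — right adjoints commute with products, not coproducts. Instead, the key input should be that the left adjoint $\RE$ is not merely symmetric monoidal but preserves \emph{compact} objects. Concretely, $\RE$ sends the compact generators $S^{m+n\alpha}$ of $\SHA(F)$ to compact objects of $\SH(G)$ (spheres $S^m$ in the nonequivariant case, representation spheres in the $C_2$-case), and more generally it restricts to a functor $\SHA(F)^c \to \SH(G)^c$. Equivalently, $\Sing$ preserves coproducts precisely because its left adjoint $\RE$ preserves compact objects; this is a standard adjunction argument which I would spell out as follows.

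First I would fix a set-indexed family $\{Y_i\}_{i\in I}$ in $\SH(G)$ and the canonical map $\theta:\bigvee_i \Sing(Y_i)\to \Sing(\bigvee_i Y_i)$ in $\SHA(F)$. To show $\theta$ is an equivalence it suffices, since $\SHA(F)$ is compactly generated by the spheres $S^{m+n\alpha}$, to check that $\theta$ induces an isomorphism on $[S^{m+n\alpha},-]_{\SHA(F)}$ for all $m,n\in\ZZ$. On the left side, because $S^{m+n\alpha}$ is compact in $\SHA(F)$, we have $[S^{m+n\alpha},\bigvee_i\Sing(Y_i)]\cong\bigoplus_i[S^{m+n\alpha},\Sing(Y_i)]$; applying the adjunction each summand is $[\RE(S^{m+n\alpha}),Y_i]_{\SH(G)}$. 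On the right side the adjunction gives $[S^{m+n\alpha},\Sing(\bigvee_i Y_i)]\cong[\RE(S^{m+n\alpha}),\bigvee_i Y_i]_{\SH(G)}$, and since $\RE(S^{m+n\alpha})$ is a (representation) sphere, hence compact in $\SH(G)$, this equals $\bigoplus_i[\RE(S^{m+n\alpha}),Y_i]_{\SH(G)}$. One then checks the evident compatibility, i.e.\ that under these identifications $\theta_*$ is the identity map of $\bigoplus_i[\RE(S^{m+n\alpha}),Y_i]_{\SH(G)}$, which is formal from naturality of the unit/counit of the adjunction. This proves $\theta$ is an equivalence.

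The only genuine point requiring justification — and the place I would be most careful — is the claim that $\RE(S^{m+n\alpha})$ is compact in $\SH(G)$: in the nonequivariant case this is $S^m$ (the realization of $\mathbb{A}^1\setminus 0$ being $\CC^\times\simeq S^1$), and in the $C_2$-equivariant case it is $S^{m+n\sigma}$ with $\sigma$ the sign representation, both manifestly compact. More robustly, one can invoke the fact (see \cite[\S4.4]{ho:gal}) that $\RE$ carries the compact objects $\SHA(F)^c$ into $\SH(G)^c$, since $\RE$ is a left adjoint of a coproduct-preserving-wished-for functor that is triangulated and monoidal and sends a set of compact generators to compact objects. Granting this, the argument above is complete; no further computation is needed.
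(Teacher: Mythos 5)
Your overall strategy is the same as the paper's (check the canonical map against compact generators, use the adjunction, and use that $\RE$ preserves compactness), but the main argument as written has a genuine gap: the motivic spheres $S^{m+n\alpha}$ do \emph{not} compactly generate $\SHA(F)$ --- they generate only the cellular subcategory $\SHA(F)^{\cell}$, and $\Sing$ does not typically land in the cellular subcategory. Checking $[S^{m+n\alpha},\theta]$ is an isomorphism only shows that $\theta$ becomes an equivalence after cellularization, which is weaker than the lemma. The correct set of compact generators is the suspension spectra $\Sigma^n_T X_+$ for $X\in\Sm/F$.

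Your ``more robust'' alternative --- that $\RE$ carries $\SHA(F)^c$ into $\SH(G)^c$ --- is the right input, and once you test against the generators $\Sigma^n_T X_+$ the argument goes through verbatim. The paper establishes exactly this compactness fact by noting that $\Sigma^n_T X_+$ is dualizable (using Röndigs--Østvær, valid here since $\operatorname{char} F = 0$ because $F$ embeds in $\CC$), that strong symmetric monoidal functors preserve dualizability, and that dualizable objects are compact. So to repair your write-up, replace ``compactly generated by $S^{m+n\alpha}$'' with ``compactly generated by $\{\Sigma^n_T X_+\}$,'' and supply the dualizability argument (or a reference proving $\RE$ preserves compacts) in place of the sphere-by-sphere identification.
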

\begin{proof}
 For any $X$ in $\Sm/F$ and any $n\in \Z$, $\Sigma^n_TX_+$ is dualizable by \cite[Theorem 4.9]{RoendOstHZModules} and since $\RE$ is symmetric monoidal $\RE(\Sigma^n_TX_+)$ is dualizable (see e.g., \cite[Proposition 3.10]{HuFauskMay:isos}), hence compact. Using adjunction, it is straightforward to verify that \[[\Sigma^n_TX_+,\bigvee_{\alpha}\Sing(E_{\alpha})]_{\SHA(F)}
 \iso  [\Sigma^n_T X_+,\Sing(\bigvee_{\alpha} E_{\alpha})]_{\SHA(F)}\]
which implies that the map $\bigvee_{\alpha} \Sing(E_{\alpha})\to \Sing\left(\bigvee_{\alpha} E_{\alpha}\right)$ is an equivalence.
 \end{proof}

\begin{lemma}\label{lem:Seqv}
The canonical map 
 $$
 \Sing(K)\wedge X\xrightarrow{\wkeq} \Sing(K\wedge \RE(X))
 $$
 is an equivalence for 
 any $K$ in $\SH(G)$ and any $X$ in $\SHA(F)$. 
\end{lemma}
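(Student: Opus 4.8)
The plan is to prove the statement by a standard "reduce to dualizable $X$, then to the sphere" argument, leveraging the projection formula structure already available because $\RE$ is symmetric monoidal with right adjoint $\Sing$. First I would fix $K$ and let $\mathscr{C}\subseteq \SHA(F)$ be the full subcategory of those $X$ for which the canonical map $\Sing(K)\wedge X\to \Sing(K\wedge \RE(X))$ is an equivalence. The canonical map itself should be constructed as the adjunct of $\RE(\Sing(K)\wedge X)\simeq \RE\Sing(K)\wedge \RE(X)\xrightarrow{\epsilon\wedge \id} K\wedge \RE(X)$, using that $\RE$ is strong symmetric monoidal and $\epsilon:\RE\Sing\to \id$ is the counit. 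I would then observe that $\mathscr{C}$ is a localizing subcategory: it is closed under cofiber sequences (both $\Sing(K)\wedge (-)$ and $\Sing(K\wedge\RE(-))$ are exact, using \aref{lem:Scop} and the fact that $\RE$ preserves cofiber sequences) and, crucially, closed under arbitrary coproducts, again by \aref{lem:Scop} together with the fact that $\RE$ and $\wedge$ preserve coproducts. Since $\SHA(F)$ is generated as a localizing subcategory by the dualizable objects $\Sigma^{m+n\alpha}X_+$ for $X\in \Sm/F$ — indeed even by the Tate spheres — it suffices to verify the equivalence when $X$ is dualizable.

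For dualizable $X$, the argument is the usual one. When $X$ is $\otimes$-invertible (in particular for $X = S^{m+n\alpha}$), $\RE(X)$ is invertible in $\SH(G)$, and the map in question is obtained from the unit case $X = S^0$ (where it is the identity map $\Sing(K)\to \Sing(K)$) by smashing with $X$ and using that $\Sing$ commutes with smashing by an invertible object up to the appropriate suspension — more precisely one checks the square relating $\Sing(K)\wedge \Sigma^\lambda S^0$ and $\Sing(K\wedge \RE(\Sigma^\lambda S^0))$ commutes by naturality. More generally, for an arbitrary dualizable $X$ with dual $X^\vee$, one has the chain of natural isomorphisms, for any dualizable test object $T = \Sigma^{m+n\alpha}Y_+$,
\[
  [T,\Sing(K)\wedge X]_{\SHA(F)}\iso [T\wedge X^\vee,\Sing(K)]_{\SHA(F)}\iso [\RE(T\wedge X^\vee),K]_{\SH(G)},
\]
while on the other side
\[
  [T,\Sing(K\wedge\RE X)]_{\SHA(F)}\iso [\RE T,K\wedge \RE X]_{\SH(G)}\iso [\RE T\wedge \RE(X)^\vee,K]_{\SH(G)},
\]
and these agree because $\RE(T\wedge X^\vee)\simeq \RE T\wedge \RE(X^\vee)\simeq \RE T\wedge \RE(X)^\vee$ as $\RE$ is strong monoidal and hence preserves duals. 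Checking that this identification is induced by the canonical map (and not merely an abstract isomorphism) is a matter of chasing the adjunctions, which I would do but not belabor; alternatively one invokes the general fact that for a strong symmetric monoidal left adjoint $\RE$ with lax monoidal right adjoint $\Sing$, the projection-formula map $\Sing(K)\wedge X\to \Sing(K\wedge \RE X)$ is an equivalence whenever $X$ is dualizable. Since dualizable objects generate $\SHA(F)$ as a localizing subcategory and $\mathscr{C}$ is localizing, $\mathscr{C} = \SHA(F)$, completing the proof.

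The main obstacle I anticipate is not any single computation but the bookkeeping required to identify the abstract isomorphism produced by the duality/adjunction juggling with the \emph{canonical} comparison map in the statement — i.e. verifying naturality of the relevant squares so that one genuinely concludes the named map is an equivalence. This is routine but fiddly; the cleanest route is probably to phrase everything in terms of the projection formula and cite or quickly reprove the general categorical lemma (strong monoidal left adjoint $\Rightarrow$ projection formula holds on dualizables), so that the only thing left to verify by hand is the localizing-subcategory closure properties, which follow immediately from \aref{lem:Scop}.
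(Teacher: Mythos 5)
Your proof is correct and takes essentially the same route as the paper: construct the comparison map via the counit, use \aref{lem:Scop} to show the subcategory of objects where it is an equivalence is localizing, reduce to dualizable objects, and finish using that $\RE$ is strong symmetric monoidal (so preserves duals) with right adjoint $\Sing$. The paper executes the dualizable case as a direct chain of internal-hom equivalences $\Sing(K)\wedge X\simeq F(DX,\Sing K)\simeq\Sing F(\RE(DX),K)\simeq\Sing F(D(\RE X),K)\simeq\Sing(K\wedge\RE X)$ rather than your Yoneda test against generators, but this is a cosmetic difference within the same projection-formula argument.
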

\begin{proof}
The map of the lemma is the composite of maps
$$
\Sing(K)\wedge X \to \Sing(K)\wedge \Sing\RE(X) \to \Sing (K\wedge \RE(X)).
$$

Consider the full subcategory $\mathcal{A}\subseteq \SHA(F)$ whose objects satisfy the condition that $\Sing(K)\wedge X\wkeq \Sing(K\wedge \RE(X))$. The category $\mathcal{A}$ is triangulated and by the previous lemma it is closed under arbitrary coproducts, \emph{i.e.}, it is a localizing subcategory. It therefore suffices to show that  $\mathcal{A}$ contains every compact spectrum $X$. 
 
 Since $F$ has characteristic zero, every smooth quasi-projective $F$-scheme is dualizable by \cite[Theorem 4.9]{RoendOstHZModules}. 
By \cite[Theorem 2.1.3]{HPS:axiomatic}, this implies that
$\SHA(F)^{c}$ is equal to the category of dualizable objects. Since $\RE$ is strong symmetric monoidal, 
$\RE(DX) \wkeq D(\RE X)$ and $F(Y, \Sing(K))\wkeq \Sing F(\RE(Y), K)$, see e.g. 
\cite[Proposition 3.10 and (3.4)]{HuFauskMay:isos}. We thus have equivalences
\begin{align*}
\Sing(K)\wedge X &  \wkeq F(DX, \Sing(K))\\
& \wkeq \Sing F(\RE(DX), K) \\
& \wkeq \Sing F(D(\RE X), K) \\
& \wkeq\Sing (K\wedge \RE X).
\end{align*}
\end{proof}

 The image of 
$\Spc(\RE):\Spc(\SH(G)^{c})\to \Spc(\SHA(F)^{c})$ is characterized as follows.

\begin{theorem}\label{thm:kerchar}
 Let $K$ be a field in $\SH(G)$. Then 
$\Sing(K)$ is a field in $\SHA(F)$.
Moreover, letting $\ker K\in \Spc(\SH(G)^{\fin})$ be the prime ideal of $K$-acyclics, we have  
$$
\Spc(\RE)(\ker K) = \ker(\Sing(K)).
$$

\end{theorem}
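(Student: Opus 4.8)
The plan is to establish the ring structure formally, deduce that every $\Sing(K)$-module is free, and then compute the prime. Since $\Sing$ is lax symmetric monoidal (it is right adjoint to the symmetric monoidal functor $\RE$), the object $\Sing(K)$ is automatically a ring in $\SHA(F)$; the content of the first assertion is thus that every $\Sing(K)$-module is free. I would begin with the apparently stronger statement that $\Sing(K)\wedge X$ is a \emph{free} $\Sing(K)$-module for \emph{every} $X\in\SHA(F)$, not only for $X$ a module. Indeed, \aref{lem:Seqv} gives an equivalence $\Sing(K)\wedge X\simeq\Sing(K\wedge\RE(X))$; since $K$ is a field in $\SH(G)$ the $K$-module $K\wedge\RE(X)$ is free, say $K\wedge\RE(X)\simeq\bigvee_\mu Q_\mu\wedge K$ with $Q_\mu\in\Pic(\SH(G))$, and applying $\Sing$ together with \aref{lem:Scop} yields $\Sing(K)\wedge X\simeq\bigvee_\mu\Sing(Q_\mu\wedge K)$. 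The functor $\RE$ is surjective onto $\Pic(\SH(G))$, which is $\ZZ\langle S^1\rangle$ when $G=\{e\}$ and $\ZZ\langle S^1\rangle\oplus\ZZ\langle S^\sigma\rangle$ when $G=C_2$, because $\RE(S^1)=S^1$ and $\RE(S^\alpha)$ is $S^1$ or the sign sphere $S^\sigma$ accordingly; so I may choose $P_\mu=S^{p_\mu+q_\mu\alpha}$ with $\RE(P_\mu)\simeq Q_\mu$, and a second application of \aref{lem:Seqv} gives $\Sing(Q_\mu\wedge K)\simeq\Sing(\RE(P_\mu)\wedge K)\simeq P_\mu\wedge\Sing(K)$. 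Hence $\Sing(K)\wedge X\simeq\bigvee_\mu S^{p_\mu+q_\mu\alpha}\wedge\Sing(K)$ is free.

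Next I would upgrade this to freeness of an arbitrary $\Sing(K)$-module $M$. On the one hand, the action map $\Sing(K)\wedge M\to M$ is a split epimorphism of $\Sing(K)$-modules (split by $\eta\wedge\id_M$), so $M$ is a retract of the free module $\Sing(K)\wedge M$. On the other hand, applying the previous step to compact generators of $\SHA(F)$ shows that the free modules $S^{p+q\alpha}\wedge\Sing(K)$, $p,q\in\ZZ$, generate the category of $\Sing(K)$-modules, i.e.\ every $\Sing(K)$-module is $\Sing(K)$-cellular. One then argues exactly as in the proof of \aref{lemma:realize}: the $\ZZ\oplus\ZZ\{\alpha\}$-graded homotopy ring $\Lambda_\star$, with $\Lambda_{(p,q)}:=[S^{p+q\alpha},\Sing(K)]_{\SHA(F)}\cong[\RE(S^{p+q\alpha}),K]_{\SH(G)}$, is a graded division ring because $K$ is a $\otimes$-field, so the graded $\Lambda_\star$-module $\pi_\star(M):=\bigoplus_{p,q}[S^{p+q\alpha},M]_{\SHA(F)}$ admits a homogeneous basis; this basis determines a $\Sing(K)$-module map $\bigvee_\lambda S^{p_\lambda+q_\lambda\alpha}\wedge\Sing(K)\to M$ which is an isomorphism on the $\pi_\star$ above and hence, both sides being $\Sing(K)$-cellular, an equivalence. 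Thus $\Sing(K)$ is a field in $\SHA(F)$, and \aref{prop:ker} (with $\Dd=\SHA(F)^c$) shows $\ker(\Sing(K))=\ker_{\SHA(F)^c}(\Sing(K))$ is a tensor triangular prime of $\SHA(F)^c$.

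For the second assertion I must identify this prime with $\Spc(\RE)(\ker K)=\RE^{-1}(\ker K)$, i.e.\ show that for $X\in\SHA(F)^c$ one has $K\wedge\RE(X)\simeq 0$ if and only if $\Sing(K)\wedge X\simeq 0$. One direction is immediate; for the other, \aref{lem:Seqv} reduces the claim to the assertion that $\Sing$ reflects triviality on objects of the form $K\wedge\RE(X)$. Such an object is a free $K$-module, and by the first step $\Sing(K\wedge\RE(X))\simeq\bigvee_\mu S^{p_\mu+q_\mu\alpha}\wedge\Sing(K)$. Since $K\ne 0$ its unit $S^0\to K$ is nonzero, whence $\pi_0(K)=[\RE(S^0),K]=[S^0,\Sing(K)]_{\SHA(F)}\ne 0$ and in particular $\Sing(K)\ne 0$; therefore each summand $S^{p_\mu+q_\mu\alpha}\wedge\Sing(K)$ is nonzero and the coproduct vanishes precisely when the indexing set is empty, which is precisely when $K\wedge\RE(X)\simeq 0$. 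This gives $\ker(\Sing(K))=\RE^{-1}(\ker K)=\Spc(\RE)(\ker K)$.

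I expect the crux to be the second paragraph: the passage from ``$\Sing(K)\wedge X$ is free for every $X$'' to ``every $\Sing(K)$-module is free''. A purely formal (Eilenberg-swindle) argument only produces that every module is \emph{stably} free; what is needed in addition is the cellularity reduction, carried out inside the non-cellular category $\SHA(F)$, together with the fact that the graded endomorphism ring of $\Sing(K)$ on its distinguished sphere twists is a graded division ring---and this last point is where being a $\otimes$-field, as opposed to a mere ring object, is used essentially. Everything else is formal, relying only on \aref{lem:Scop}, \aref{lem:Seqv}, \aref{prop:ker}, and surjectivity of $\RE$ on Picard groups.
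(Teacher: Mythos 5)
Your proof of the second assertion is correct and close to the paper's, and your first step (showing $\Sing(K)\wedge X$ is free for every $X$, via Lemmas \ref{lem:Seqv}, \ref{lem:Scop}, and surjectivity of $\RE$ on Picard groups) is sound and usefully more explicit than the paper about the Picard-group input. The gap is in the passage from this to ``every $\Sing(K)$-module $M$ is free.'' You correctly observe that the Eilenberg swindle only yields stable freeness and propose a cellularity reduction modeled on Lemma \ref{lemma:realize}: build a basis map $\bigvee_\lambda S^{p_\lambda+q_\lambda\alpha}\wedge\Sing(K)\to M$ and conclude it is an equivalence because it is a $\pi_\star$-isomorphism ``between $\Sing(K)$-cellular objects.'' But the detection principle you invoke --- that $\pi_\star(-)=\bigoplus_{p,q}[S^{p+q\alpha},-]$ detects equivalences --- holds on the localizing subcategory $\SHA(F)^{\cell}$ generated by the $S^{p+q\alpha}$, where these spheres are compact generators. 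A $\Sing(K)$-module lies only in the localizing subcategory generated by $\Sing(K)$, and $\Sing(K)$ is not known to be cellular in $\SHA(F)$: $\Sing$ is a (lax monoidal) right adjoint and has no reason to preserve cellularity. Without $\Sing(K)\in\SHA(F)^{\cell}$, your $\pi_\star$-isomorphism cannot be upgraded to an equivalence, and the argument stalls exactly at the step you yourself flag as the crux.

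The paper closes this gap with a retract-of-equivalence argument that avoids any homotopy-group detection. Using the constant-presheaf functor $c^*$ of \cite[Theorem 4.6]{ho:gal}, the adjoint of $\RE c^*(K)\cong K$ is a ring map $c^*(K)\to\Sing(K)$; hence a $\Sing(K)$-module $X$ is a $c^*(K)$-module, so $\RE(X)$ is a $K$-module, therefore free, and $\Sing\RE(X)$ is a free $\Sing(K)$-module by Lemma \ref{lem:Scop} (together with Lemma \ref{lem:Seqv} to identify $\Sing(Q\wedge K)$ with invertible twists of $\Sing(K)$). One then compares the two split retracts $X\to\Sing(K)\wedge X\to X$ and $\Sing\RE(X)\to\Sing(K\wedge\RE X)\to\Sing\RE(X)$ via the adjunction unit; by Lemma \ref{lem:Seqv} the middle vertical map is an equivalence, so the outer map $X\to\Sing\RE(X)$ is a retract of an equivalence and hence an equivalence. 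Thus $X\simeq\Sing\RE(X)$ is manifestly free. This identification $X\simeq\Sing\RE(X)$, obtained purely categorically, is the missing idea in your proposal.
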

\begin{proof}

The functor $\mathcal{S}$ is lax monoidal as it is right adjoint to a monoidal functor. In particular, if $K$ is a ring in $\SH(G)$, then $\mathcal{S}(K)$ is a ring in $\SHA(F)$.

Let $c^*$ denote the functor from \cite[Theorem 4.6]{ho:gal}.  We first observe that if $X$ is an $\Sing(K)$-module then $\RE(X)$ is a $K$-module. To see this, first note that the adjoint of the canonical isomorphism $\RE c^*(K) \iso K$ is a ring map  $c^*(K)\to \Sing(K)$.  Therefore $X$ is a $c^*(K)$-module and hence $\RE(X)$ is a $K$-module. Since $K$ is a field, we  have that $\RE(X) = \bigvee_{\alpha} K$. It follows from   \aref{lem:Scop} that $\Sing\RE(X) \wkeq \bigvee_{\alpha} \Sing(K)$ and so $\Sing\RE(X)$ is a free $\Sing(K)$-module. 

Now consider the comparison of retracts
$$
\xymatrix{
X \ar[r]\ar[d] & \Sing(K)\wedge X \ar[r]\ar[d]^{\wkeq} & X \ar[d] 
\\
\Sing\RE(X) \ar[r] & \Sing(K\wedge \RE(X)) \ar[r] & \Sing\RE(X).
}
$$
By \aref{lem:Seqv} the middle arrow is an equivalence and therefore so is the outer arrow and so $X$ is a free $\Sing(K)$-module as well. This establishes the first statement.

To establish the second statement, we need to see that
$\Sing(K)\wedge X \wkeq \ast$ if and only if $K\wedge \RE X \wkeq \ast$. This follows from \aref{lem:Seqv} and the observation that if $\Sing(Z)\wkeq \ast$ then $Z\wkeq \ast$ (since $\pi_{n}(Z) = [S^{n},\Sing(Z)]_{\SHA(\CC)}$). 
\end{proof}

\begin{remark}\label{rmk:kersing}
In particular, \aref{thm:kerchar} and \aref{prop:topmot} imply that $\ker(\Sing_B(H\QQ))$ and $\ker(\Sing_B(K(p,n-1)))$ are in $\Spc(\SHA(F))$, mapping to $([F^\times],\eta)$ and $([F^\times],p,\eta)$, respectively, under $\rho^\bullet$.  

Similarly, one can show that $\ker(\Sing_B^{C_2}(EC_{2+}\wedge K(p,n-1)))$ and $\ker(\Sing_B^{C_2}(\widetilde{E}C_2\wedge K(p,n-1))$ belong to $\Spc(\SHA(F))$ with images under $\rho^\bullet$ specified by \aref{prop:eqmot}.
\end{remark}

Lastly we note the following restriction on fields in $\SHA(F)$. 
 
\begin{prop}
Let $F$ be a subfield of $\CC$. If $E$ is a field in $\SHA(F)$ then $\RE_{B}(E)$ is either a sum of suspensions of Morava $K$-theories or it is contractible. Moreover in the first case, 
$\id_{E}\wedge \eta\wkeq \ast$.
\end{prop}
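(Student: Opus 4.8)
Write $N:=\RE_B(E)$. Since $\RE_B$ is a strong symmetric monoidal left adjoint, $N$ is a homotopy‑associative ring spectrum in $\SH$; moreover $\RE_B$ preserves coproducts and carries $\Pic(\SHA(F))$ into $\Pic(\SH)=\ZZ$, so it sends free $E$‑modules to free $N$‑modules. The plan is to transport $E$ down to $\SH$ along $\RE_B$, identify $N$ via the chromatic classification of field spectra, and then recover the statement about $\eta$ by comparing with the vanishing of the classical Hopf element.

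First I would pin down the shape of $N$ using the constant‑presheaf functor $c^{*}\colon\SHfin\to\SHA(F)^{c}$, which satisfies $\RE_B\circ c^{*}=\id$ (cf.\ the proof of \aref{cor:monicC}). For a finite spectrum $V$ the object $E\wedge c^{*}V$ is an $E$‑module, hence free because $E$ is a field; therefore $N\wedge V\wkeq\RE_B(E\wedge c^{*}V)$ is a free $N$‑module, and when $N\not\wkeq\ast$ one has $N\wedge V\wkeq\ast$ exactly when $E\wedge c^{*}V\wkeq\ast$. Thus, assuming $N\not\wkeq\ast$, the thick $\otimes$‑ideal $\{V\in\SHfin\mid N\wedge V\wkeq\ast\}$ coincides with $(c^{*})^{-1}\bigl(\ker_{\SHA(F)^{c}}(E)\bigr)$, which is a tensor triangular prime of $\SHfin$ by \aref{prop:ker} together with the contravariant functoriality of $\Spc$; by \aref{thm:top} it equals some $\Cc_{p,n}$. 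Combining this with the freeness of $N\wedge V$ for \emph{every} finite $V$ and invoking the classification of field spectra in $\SH$ (the Morava $K$‑theory recognition principle stemming from \cite{hs:nil2}; see also \cite{HPS:axiomatic}) identifies $N$, up to weak equivalence, with a wedge of suspensions of a single Morava $K$‑theory $K(p,n-1)$, where by convention $K(p,0):=H\QQ$. It is the freeness of $N\wedge V$, and not merely the primality of its kernel on finite spectra, that is doing the work here: for instance $ko/2$ has prime kernel on finite spectra but is not a wedge of Morava $K$‑theories.

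Next I would handle the ``moreover'' clause. Suppose $N\not\wkeq\ast$, so $N$ is a nonzero wedge of suspensions of some $K(p,n-1)$. The class $\eta_{\mathrm{top}}\in\pi_{1}(\SS)$ acts as zero on every Morava $K$‑theory — its homotopy ring is concentrated in even degrees — hence on $N$, so $\id_{N}\wedge\eta_{\mathrm{top}}\wkeq\ast$. Since $\RE_B(\eta)=\eta_{\mathrm{top}}$ and $\RE_B$ is monoidal, this reads $\RE_B(\id_{E}\wedge\eta)\wkeq\ast$; equivalently the image $\bar\eta$ of $\eta$ in $[S^{\alpha},E]_{\SHA(F)}$ becomes $0$ upon applying $\RE_B$. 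But $E$ is a field, so every homogeneous self‑map of $E$ is either $0$ or invertible; as the realization of $\bar\eta$ is the zero self‑map of the nonzero ring spectrum $N$, $\bar\eta$ cannot be invertible, and therefore $\bar\eta=0$. This is precisely the assertion $\id_{E}\wedge\eta\wkeq\ast$.

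The main obstacle is the classification invoked at the end of the second paragraph: one must verify carefully that a ring spectrum $N\not\wkeq\ast$ with $N\wedge V$ free over $N$ for all finite $V$ really is a wedge of suspensions of one Morava $K$‑theory. The positive‑height part of this is the standard argument pitting the $v_{i}$‑self‑maps of type‑$n$ complexes against the nilpotence theorem; the height‑zero (rational) part additionally requires observing that the rational coefficient ring occurring there is $\QQ$ rather than some larger $\QQ$‑algebra. The remaining ingredient — that the graded endomorphism ring of a field object is a graded‑division ring — is standard, but if no convenient reference is at hand it is proved directly: the cofiber of multiplication by a nonzero homogeneous non‑unit would be a nonzero free module, and comparing homotopy groups yields a contradiction.
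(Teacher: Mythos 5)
Your first paragraph and the classification of $N=\RE_B(E)$ as a wedge of Morava $K$-theories are essentially the same as the paper's argument: both rest on the identification $N\wedge X\wkeq\RE_B(E\wedge c^*X)$ and the fact that a strong monoidal, coproduct-preserving functor carries free $E$-modules to free $N$-modules, so $N$ is a field ring spectrum in the sense of \cite[Prop.~1.9]{hs:nil2}. (The paper in fact applies this for \emph{all} $X\in\SH$ and cites that proposition directly; your detour through the prime $\ker_{\SH^\fin}N\in\Spc(\SH^\fin)$ is harmless but, as you note yourself, not what forces $N$ to be a wedge of $K(n)$'s.)

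The gap is in the ``moreover'' clause. You reduce to showing that $\id_E\wedge\eta$ is not invertible and then invoke the dichotomy ``every homogeneous self-map of a field object is zero or invertible.'' You acknowledge this is the crux, but your proposed justification --- ``the cofiber of multiplication by a nonzero homogeneous non-unit would be a nonzero free module, and comparing homotopy groups yields a contradiction'' --- does not go through as stated. In $\SHA(F)$ a field object $E$ need not be cellular, so homotopy groups do not detect equivalences, and more fundamentally the ``free'' $E$-modules here are wedges of $P\wedge E$ for \emph{arbitrary} invertible $P$, where $\Pic(\SHA(F))$ is not understood; nothing in the definition of a field forces $\pi_*(E)_*$ to be a bigraded division ring or forces a nonzero cofiber of an $E$-module self-map to kill the map. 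Without that input the dichotomy is an unproved lemma sitting in the middle of your argument.

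The paper sidesteps the dichotomy entirely. It observes that $\Sing_B\RE_B(E)$ is an $E$-algebra, hence (since $E$ is a field) a free $E$-module, and it is nonzero because $\RE_B(E)\not\wkeq\ast$. Therefore some $P\wedge E$ is a \emph{retract} of $\Sing_B\RE_B(E)$. By \aref{lem:Seqv}, the $\eta$-action on $\Sing_B\RE_B(E)$ is $\Sing_B$ applied to $\id_{\RE_B(E)}\wedge\eta^{top}$, which vanishes because $\eta^{top}$ acts by zero on every Morava $K$-theory; naturality of $-\wedge\eta$ then kills the $\eta$-action on the retract $P\wedge E$, and invertibility of $P$ gives $\id_E\wedge\eta\wkeq\ast$. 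No statement about the endomorphism ring of $E$ is needed. If you want to salvage your route, you would have to actually prove the graded-division-ring claim for motivic field objects; otherwise, replace the last step with the retraction argument.
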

\begin{proof} 

Since $\RE_{B}(E)\wedge X \wkeq \RE_{B}(E\wedge c^*X)$ and $E$ is a field, $\RE_{B}(E)\wedge X$ is free over $\RE_{B}(E)$ for any $X$ in $\SH$. Thus if $\RE_{B}(E)$ is not contractible, it is a sum of suspensions of Morava $K$-theories by \cite[Proposition 1.9]{hs:nil2}.

Assume the $E$ is not contractible. Then the $E$-algebra $\mathcal{S}_B\RE_B(E)$ is also not contractible. Since it is also free,   $P\wedge E$ is a retract of $\mathcal{S}_B\RE_{B}(E)$ for some invertible motivic spectrum $P$. 

Now, via the equivalence of \aref{lem:Seqv}, we have a commutative diagram of maps
$$
\xymatrix{
\Sing_{B}\RE_{B}(E) \ar[r]^-{\id\wedge \eta}\ar[rd]_-{\Sing(\id\wedge \eta^{top})} & 
\Sing_{B}\RE_{B}(E)\wedge S^{-\alpha} \ar[d]^{\wkeq} \\
& \mathcal{S}_B(\RE_{B}(E)\wedge S^{-1}).
}
$$

Since $\eta^{top}$ acts by zero on Morava $K$-theory, it follows that 
$\id_{\mathcal{S}_B\RE_{B}(E)}\wedge \eta\simeq\ast$ and hence 
$\id_{P\wedge E}\wedge \eta\simeq\ast$ and since $P$ is invertible, 
$\id_{E}\wedge \eta\simeq\ast$.
\end{proof}

\begin{remark}
If $F$ is real closed, then $X_F = *$ and \cite{bman} implies that $\SHA(F)[[-1]^{-1}]\simeq \SH$, the classical Spanier-Whitehead category.  Let $\EM(\ul W[\eta^{\pm 1}]\otimes \QQ)$ denote the Eilenberg-MacLane object associated with the homotopy module given by the rationalization of the Witt sheaf, and note that this object lives in $\SHA(F)[[-1]^{-1}]$ since $(2+[-1]\eta)\eta=0$.  Under Bachmann's equivalence, this object is sent to the rational Eilenberg-MacLane spectrum $H\mathbb{Q}$, which is a field in $\SH$.  Since every $\EM(\ul W[\eta^{\pm 1}]\otimes \QQ)$-module is $[-1]$-periodic, we may conclude that $\EM(\ul W[\eta^{\pm 1}]\otimes \QQ)$ is a motivic field spectrum.  Similarly, $\EM(\ul W[\eta^{\pm 1}]/p)$ is a motivic field spectrum for all odd primes $p$.  Of course, the complex Betti realizations of such spectra are trivial, but Bachmann's functor is a real Betti realization, \emph{i.e.}, the composition of $C_2$-equivariant Betti realization with geometric fixed points.
\end{remark}
  
\section{Questions}\label{sec:q}

This section contains a list of questions and commentary thereon.  It is the authors' hope that interested readers will take up these problems and advance our collective understanding of the tensor triangular geometry of stable motivic homotopy theory. In order to speak simultaneously about both compact cellular and all compact spectra, as before, we will write $\KK(F)$ to refer to either of $\Cc(F)$ and $\SHA(F)^c$.

\begin{question}\label{q:rigid}
Is  $\Spc(\KK(F))$ rigid in the sense that 
$\Spc(i^*)$ is a homeomorphism
whenever $i:F\hookrightarrow F'$ is an extension of algebraically closed or real closed fields? 
\end{question}

This question is motivated by the observations that the ring $GW(F)$ and the space $\Spec^h(K_{*}^{MW}(F))$ are rigid amongst these types of extensions.  Additionally, the  $p$-complete cellular categories are rigid for algebraically closed fields. 
Write $\Cc(F)^{\wedge}_{p}\subseteq \SHA(F)$ for  the thick subcategory generated by  $(S^{m\alpha})^{\wedge}_{p}$ for $m\in \Z$. 
Here $E^{\wedge}_{p} := L_{S^0/p}(E)$ is the Bousfield localization with respect to the mod-$p$ Moore spectrum. 

\begin{prop}
 Let $i:F\hookrightarrow F'$ be an extension of algebraically closed fields and $p$ a prime different from ${\rm char}(F)$.
Then $\Spc(\Cc(F)^{\wedge}_{p})\to \Spc(\Cc(F')^{\wedge}_{p})$ 
is a homeomorphism.
 \end{prop}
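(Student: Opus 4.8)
The plan is to promote the statement from a homeomorphism of spectra to an \emph{equivalence of tensor triangulated categories}: I would show that the $p$-completed base change functor $\bar i^{*}\colon\Cc(F)^{\wedge}_{p}\to\Cc(F')^{\wedge}_{p}$ is a tensor-exact equivalence, and then invoke the fact that $\Spc$ carries tensor-exact equivalences to homeomorphisms (so $\Spc(\bar i^{*})$, and its inverse, realize the homeomorphism in the statement). Here $\bar i^{*}:=(-)^{\wedge}_{p}\circ i^{*}$, where $i^{*}\colon\SHA(F)\to\SHA(F')$ is the usual base change: it is exact and strong symmetric monoidal, $i^{*}(S^{m\alpha})\wkeq S^{m\alpha}$, and $i^{*}(S^{0}/p)\wkeq S^{0}/p$, so $i^{*}$ preserves $L_{S^{0}/p}$-equivalences; hence $\bar i^{*}$ is exact, symmetric monoidal for the $p$-completed smash product, and sends $(S^{m\alpha})^{\wedge}_{p}$ to $(S^{m\alpha})^{\wedge}_{p}$. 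Since these objects generate the two thick subcategories, $\bar i^{*}$ restricts as claimed. (The only bookkeeping here is that $L_{S^{0}/p}$ commutes with smashing by the invertible objects $S^{m\alpha}$, which is routine and I would not belabor it.)

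Next I would check that $\bar i^{*}$ is fully faithful by the usual d\'evissage: the full subcategory of $X\in\Cc(F)^{\wedge}_{p}$ for which $\bar i^{*}$ is bijective on $[S^{\lambda}\wedge X,Y]$ for all $Y\in\Cc(F)^{\wedge}_{p}$ and all $\lambda\in\ZZ\oplus\ZZ\{\alpha\}$ is thick, and, for $X$ fixed, the full subcategory of such $Y$ is thick as well; so it suffices to take $X$ and $Y$ equal to generators. Using the identification
\[
  [S^{a+b\alpha}\wedge(S^{c+d\alpha})^{\wedge}_{p},(S^{c'+d'\alpha})^{\wedge}_{p}]\;\cong\;\pi_{a+c-c'}\bigl((S^{0})^{\wedge}_{p}\bigr)_{d'-b-d},
\]
this reduces to showing that $\bar i^{*}$ induces an isomorphism $\pi_{m}((S^{0}_{F})^{\wedge}_{p})_{n}\to\pi_{m}((S^{0}_{F'})^{\wedge}_{p})_{n}$ for all $m,n\in\ZZ$. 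Granting full faithfulness, essential surjectivity is then formal: $\Cc(F)^{\wedge}_{p}$ is idempotent complete (a retract-closed subcategory of $\SHA(F)$, which admits countable coproducts), so the essential image of $\bar i^{*}$ is a thick subcategory of $\Cc(F')^{\wedge}_{p}$; it contains the generators $(S^{m\alpha})^{\wedge}_{p}$, hence is everything. So $\bar i^{*}$ is an equivalence, finishing the proof.

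The genuine content — the step I expect to be a real input rather than formal manipulation — is the rigidity assertion that base change along an extension $F\hookrightarrow F'$ of algebraically closed fields of characteristic different from $p$ induces an isomorphism on the $p$-complete bigraded stable homotopy groups of the motivic sphere. I would not reprove this; I would cite it. In characteristic zero it follows from Levine's comparison of motivic and classical stable homotopy theory together with continuity of $\SHA(-)$ in the base field, and in characteristic $\ell>0$ (necessarily $\ell\ne p$) it is the positive-characteristic rigidity theorem for $p$-complete motivic homotopy theory. With that input in hand, everything else is the tensor-triangular formalism sketched above.
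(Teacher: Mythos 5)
Your proposal is correct and takes essentially the same approach as the paper: the paper cites the Röndigs--Østvær rigidity theorem directly for full faithfulness of $i^*\colon\SHA(F)^{\wedge}_{p}\to\SHA(F')^{\wedge}_{p}$ and then notes that the restriction to cellular subcategories is also essentially surjective since generators map to generators, hence is an equivalence. You simply unpack the full-faithfulness step into a d\'evissage reduction to the bigraded homotopy of the $p$-complete sphere before citing the same rigidity input, so the conceptual content is identical.
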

\begin{proof}
 By \cite[Theorem 1.1]{RO:rigidity},  
 $i^*:\SHA(F)^{\wedge}_p\to \SHA(F')^{\wedge}_p$ is full and faithful. On cellular categories it is also essentially surjective and so 
 $\Cc(F)^{\wedge}_p\to \Cc(F')^{\wedge}_p$ is an equivalence of categories.
\end{proof}

 Write $\Cc(F)_{\QQ}\subseteq \SHA(F)$ for the thick subcategory generated by $(S^{m\alpha})_{\Q}$, $m\in \Z$ and $\SHA(F)^{c}_{\QQ}$ for the thick subcategory generated by $(X_+)_{\Q}$ where $X$ is a smooth $F$-scheme.  Work of Cisinski-Deglise \cite[Corollary 16.2.14]{cd} implies that when $F$ is nonreal, 
 $\SHA(F)^c_\QQ$ is equivalent to the triangulated category of rational geometric motives $\mathrm{DM}_{gm}(F)_\QQ$ and similarly that
 $\Cc(F)_\QQ$ is equivalent to the triangulated category of rational mixed Tate motives, $\mathrm{DMT}(F)_\QQ$.   Write $\KK(F)_\QQ$ for either of $\Cc(F)_\QQ$ or $\SHA(F)^c_\QQ$.
  
\begin{question}\label{q:motives}
Is $\Spc(\KK(F)_\QQ)=\ast$ when $F$ is algebraically closed? 
\end{question}
\begin{remark}
Peter's study \cite{peter:dmt} of the tensor triangular geometry of $\mathrm{DMT}(F)_\QQ$ for $F$ an algebraic extension of $\QQ$ implies that $\Spc(\Cc(\overline{\QQ})_\QQ) = \ast$, so if \aref{q:rigid} has an affirmative answer, then $\Spc(\Cc(F)_\QQ)=\ast$ for all algebraically closed fields of characteristic $0$.  Also note that, over a finite field, assuming the Beilinson-Parshin conjecture and that rational and numerical equivalence on algebraic cycles agree, Kelly \cite{kelly} has shown that $\Spc(\SHA(\mathbb{F}_q)_{\Q}^c) = \ast$.
\end{remark}
 
\begin{remark}\label{rmk:cons}
\aref{q:motives} is hard: as remarked to us by Shane Kelly, it implies the Conservativity Conjecture.  This conjecture states that Betti realization 
${\rm DM}_{gm}(\CC)_{\Q} \to {\rm D}(\Q)$, on the category of rationalized geometric motives,
is a conservative functor, 
see \cite[Conjecture 2.1]{ayoub:cons}; equivalently $\RE:\SHA(\CC)^{c}_{\QQ}\to \SH^{\fin}_\QQ$ is conservative. Indeed, if $\Spc(\SHA(\CC)^{c}_{\QQ}) = \ast$, then by the classification of thick ideals \cite[Theorem 4.10]{Balmer:ttspectrum} there is a unique proper thick ideal in $\SHA(\CC)^{c}_{\QQ}$. In particular the proper thick ideals $\{X\in \SHA(\CC)^c_{\Q} \mid \RE(X)\wkeq \ast\}$ and $\{0\}$ are equal. 

In the other direction, we have the following proposition. 
Recall that a topological space is {\it local} if it has a unique  closed point.
\end{remark}

\begin{prop}\label{prop:cons}
 If the Conservativity Conjecture holds, then  
 $\Spc(\KK(\CC)_\QQ)$ is local.
\end{prop}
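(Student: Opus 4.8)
The plan is to establish the stronger fact that, assuming the Conservativity Conjecture, the zero ideal $\{0\}$ is a tensor triangular prime of $\KK(\CC)_\QQ$. This suffices: $\{0\}$ is contained in every thick $\otimes$-ideal, so it is the unique inclusion-minimal prime, and a prime $\Pp$ of a tensor triangular spectrum is a closed point precisely when it is inclusion-minimal, since $\overline{\{\Pp\}}$ is exactly the set of primes contained in $\Pp$. Hence $\Spc(\KK(\CC)_\QQ)$ would have exactly one closed point, namely $\{0\}$, and would therefore be local.

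Write $\KK$ for either $\Cc(\CC)_\QQ$ or $\SHA(\CC)^c_\QQ$. I would first isolate the (elementary) fact that $\SH^{\fin}_\QQ$ has no nontrivial tensor zero-divisors, i.e.\ $V\wedge W\wkeq\ast$ forces $V\wkeq\ast$ or $W\wkeq\ast$: rational finite spectra are formal, so $\SH^{\fin}_\QQ\wkeq\mathrm{D}^{\mathrm{perf}}(\QQ)$ with $\wedge$ the graded tensor product of finite-dimensional $\QQ$-vector spaces, and total dimension is multiplicative; equivalently, $\Spc(\SH^{\fin}_\QQ)=\ast$ and its unique prime is $\{0\}$ since there are no $\wedge$-nilpotents. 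Second, by \aref{rmk:cons} the Conservativity Conjecture is precisely the assertion that $\RE\colon\SHA(\CC)^c_\QQ\to\SH^{\fin}_\QQ$ is conservative; its restriction to the full tensor triangulated subcategory $\Cc(\CC)_\QQ$ still lands in $\SH^{\fin}_\QQ$ (as $\RE(S^{m\alpha})\wkeq S^m$) and remains conservative, since the restriction of a conservative functor to a full subcategory is conservative. So in both cases $\RE\colon\KK\to\SH^{\fin}_\QQ$ is a conservative, strong symmetric monoidal, exact functor.

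Now $\{0\}\subsetneq\KK$ is visibly a proper thick $\otimes$-ideal (proper because the unit is nonzero), and I would verify primality as follows. If $a\wedge b\wkeq 0$ in $\KK$, then $\RE(a)\wedge\RE(b)\wkeq\RE(a\wedge b)\wkeq\ast$, so by the first observation $\RE(a)\wkeq\ast$ or $\RE(b)\wkeq\ast$; since $\RE$ is conservative it reflects contractibility (the canonical map $a\to 0$ becomes an equivalence, hence is one), so $a\wkeq 0$ or $b\wkeq 0$. Thus $\{0\}\in\Spc(\KK)$, and the reduction of the first paragraph completes the proof.

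I do not expect a substantive obstacle here: the entire content is the observation that conservativity of $\RE$ is exactly what is needed to transport the triviality ``$\{0\}$ is $\otimes$-prime in $\SH^{\fin}_\QQ$'' back to $\KK(\CC)_\QQ$, together with Balmer's identification of the closed points of a tensor triangular spectrum with its inclusion-minimal primes. In particular, neither rigidity of $\KK(\CC)_\QQ$ nor any finiteness of $\Spc(\KK(\CC)_\QQ)$ enters.
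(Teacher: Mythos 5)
Your proof is correct, and it follows the paper's proof in its essential engine (strong monoidal $\RE$, conservativity, and the elementary fact that $\SH^{\fin}_\QQ\simeq D^{\mathrm{perf}}(\QQ)$ has no $\wedge$-zero-divisors) while routing the topological conclusion differently. The paper invokes Balmer's criterion \cite[Proposition 4.2]{balmer:sss}, which for a \emph{rigid} tensor triangulated category characterizes localness of the spectrum by the absence of $\otimes$-zero-divisors; you instead observe directly that the no-zero-divisor condition (transported back along a conservative monoidal functor) says precisely that $\{0\}$ is a tensor triangular prime, and then finish by the general topological fact that $\overline{\{\Pp\}}=\{\mathscr{Q}:\mathscr{Q}\subseteq\Pp\}$, so closed points of $\Spc$ are the inclusion-minimal primes and a prime $\{0\}$ is automatically the unique one. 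This direction of Balmer's equivalence indeed does not use rigidity, so your argument is a genuine (if mild) simplification: it avoids citing Proposition 4.2 and avoids the need to check that $\KK(\CC)_\QQ$ is rigid. You also make explicit a point the paper elides, namely that the Conservativity Conjecture (stated for $\SHA(\CC)^c_\QQ$) also gives conservativity after restricting to the full subcategory $\Cc(\CC)_\QQ$. Both proofs are correct; yours is marginally more self-contained at this step.
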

\begin{proof}
 The tensor triangulated category $\KK(\CC)_{\QQ}$ is rigid in the sense that
 every object is dualizable. By \cite[Proposition 4.2]{balmer:sss} these spaces are local if and only if $X\wedge Y \wkeq \ast$ implies $X\wkeq \ast$ or $Y\wkeq \ast$. If $X\wedge Y\wkeq \ast$, then $\RE(X\wedge Y) \wkeq \RE(X) \wedge \RE(Y) \wkeq \ast$. But since these are rational spectra, it follows that either $\RE(X)\wkeq \ast$ or $\RE(Y)\wkeq\ast$.
 Since $\SHA(\CC)^{c}_\QQ$ is equivalent to ${\rm DM}_{gm}(\CC)_{\Q}$ (see \cite[Corollary 16.2.14]{cd}), the Conservativity Conjecture implies that either $X\wkeq \ast$ or $Y\wkeq \ast$.   
\end{proof}

\begin{remark}
More generally, Balmer has shown in \cite[Theorem 1.2]{balmer:surj} that if $f:\KK\to\mathscr{L}$ is a tensor triangulated functor between essentially small, tensor triangulated categories, where $\KK$ is rigid, then $f$ is conservative if and only if the induced map $\Spc(f):\Spc(\mathscr{L})\to\Spc(\KK)$ is surjective on closed points.  This is consistent with \aref{rmk:cons} and \aref{prop:cons} because $\Spc(\SH_\QQ^\fin) = *$.
\end{remark}

\begin{remark}
When $F$ is perfect of finite $2$-\'etale cohomological dimension, Bachmann has shown in \cite[Theorem 15]{b:pic} that the functor $M:\SHA(F)^c\to {\rm DM}_{gm}(F)$ is conservative.  By \cite[Theorem 1.2]{balmer:surj}, we see that this is equivalent to $\Spc(M)$ being surjective on closed points.  Note, though, that $\Spc(M)$ is not surjective as its image does not intersect $(\rho^\bullet)^{-1}([F^\times],2)$.  Further study of $\Spc({\rm DM}_{gm}(F))$ and its relationship with $\Spc(\SHA(F)^c)$ would be quite interesting.
\end{remark}

The following question speculates on the fashion in which $\Spc(\KK(F))$ might be reconstructed from the tensor triangular spectra of the stable motivic homotopy categories of the real and algebraic closures of $F$.

\begin{question}\label{q:surj}
Let $X_F$ denote the set of orderings on $F$ and let $X_F^* = X_F\amalg\{\infty\}$.  For $\alpha\in X_F$, let $F_\alpha$ denote the real closure of $F$ with respect to $\alpha$; let $F_\infty = \overline{F}$ denote the algebraic closure of $F$.  For $\alpha\in X_F^*$, let $i_\alpha:F\hookrightarrow F_\alpha$ be a chosen embedding.  Each of these induces a map $\Spc(i_\alpha^*):\Spc(\KK(F_\alpha))\to \Spc(\KK(F))$.  Is the sum of these maps
\[
  \coprod_{\alpha\in X_F^*}\Spc(\KK(F_\alpha))\longrightarrow \Spc(\KK(F))
\]
surjective?  Can we explicitly describe $\Spc(\KK(F))$ as a quotient of $\coprod_{\alpha\in X_F^*}\Spc(\KK(F_\alpha))$?
\end{question}

Note that the analogue of the surjectivity question on $\Spec^h(\kmw*F)$ is true by Thornton's theorem.  If we could produce the quotient description and answer \aref{q:rigid} in the affirmative, this would give a robust description of $\Spc(\KK(F))$.

\begin{question}\label{q:CR}
What is the structure of $\Spc(\KK(\CC))$? of $\Spc(\KK(\RR))$?
\end{question} 

While simply stated, the authors believe that this may be the hardest of the questions.  (See \aref{rmk:suppCeta}.)  Were one to successfully tackle this problem and affirmatively answer \aref{q:rigid} and \aref{q:surj}, one would completely describe $\Spc(\KK(F))$, at least for characteristic $0$ fields.
 
\bibliographystyle{plain}
\bibliography{SpcSHA1}

\end{document}